\documentclass[12pt,oneside,english]{amsart}
\usepackage[T1]{fontenc}
\usepackage[latin9]{inputenc}
\usepackage{geometry}
\geometry{verbose,letterpaper,lmargin=30mm}
\usepackage{amssymb}
\usepackage[all]{xy}

\makeatletter
\numberwithin{equation}{section} 
\numberwithin{figure}{section} 
  \@ifundefined{theoremstyle}{\usepackage{amsthm}}{}
  \theoremstyle{plain}
  \newtheorem{thm}{Theorem}[section]
  \theoremstyle{remark}
  \newtheorem{rem}[thm]{Remark}
  \theoremstyle{definition}
  \newtheorem{defn}[thm]{Definition}
 \theoremstyle{definition}
  \newtheorem{example}[thm]{Example}
  \theoremstyle{plain}
  \newtheorem{fact}[thm]{Fact}
  \theoremstyle{plain}
  \newtheorem*{thm*}{Theorem}
  \theoremstyle{plain}
  \newtheorem{lem}[thm]{Lemma}

\usepackage[all]{xy}
\usepackage{color}
\usepackage{mathdots}
\usepackage{stmaryrd}
\usepackage{dsfont}

\makeatletter
\newcommand{\xyC}[1]{%
\makeatletter
\xydef@\xymatrixcolsep@{#1}
\makeatother
} 

\makeatletter
\newcommand{\xyR}[1]{%
\makeatletter
\xydef@\xymatrixrowsep@{#1}
\makeatother
} 

\makeatother

\usepackage{babel}

\begin{document}

\author{Thilo Henrich}

\address{ Mathematisches Institut, Universität Bonn, Endenicher Allee 60,
53115 Bonn, Germany }

\email{henrich@math.uni-bonn.de}

\title{Mutation classes of diagrams via infinite graphs}

\thanks{This work was partially supported by the DFG project Bu-1866/1-2. }

\begin{abstract}
We give a complete description of the cluster-mutation classes of
diagrams of Dynkin types $\mathbb{A},\mathbb{B},\mathbb{D}$ and of
affine Dynkin types $\mathbb{B}^{(1)},\mathbb{C}^{(1)},\mathbb{D}^{(1)}$
via certain families of diagrams. 
\end{abstract}
\maketitle

\section{Introduction}

\noindent In 2002 Fomin and Zelevinsky introduced the concept of
cluster algebras \cite{S.2002}, which has been elaborated considerably
in recent years. The theory has connections to many important areas
in mathematics such as dual canonical bases for quantum groups or
total positivity for algebraic groups, but also Poisson geometry and
Teichmüller theory. To get a good overview, we recommend \cite{Fomin}
and \cite{Kellera}. Cluster algebras are not constructed as algebras
given by generators and relations, but rather by an iterative process
called mutation. Therefore understanding mutation is essential for
the whole theory.

At each step of the mutation process one obtains a special kind of
weighted graph called a diagram. The set of diagrams obtained via
mutation from a given diagram is called a mutation class, and it is
an important problem to determine the mutation class of a diagram.
By a result of Buan and Reiten \cite{Reiten} it is known that the
cardinality of the mutation class of a diagram whose underlying undirected
graph is of (affine) Dynkin type is finite. It is natural to ask whether
these finite mutation classes have an explicit combinatorial description.

In this paper we give a precise description of the mutation classes
of diagrams whose underlying graph is of Dynkin type $\mathbb{A},\mathbb{B},\mathbb{D}$
or affine Dynkin type $\mathbb{B}^{(1)},\mathbb{C}^{(1)},\mathbb{D}^{(1)}$.
This description is given in terms of certain families of diagrams
defined via infinite graphs. Note that the result for type $\mathbb{A}$
is already well known by work of Buan and Vatne \cite{A.2008a} as
well as Seven \cite{Seven2007}. Moreover there is some overlap with
independently achieved results by Vatne \cite{Vatne} for type $\mathbb{D}$.
Our result is the following (cf. sections \ref{sec:(Affine)-Dynkin-Diagrams}
and \ref{sec:The--diagrams} for precise definitions):

\begin{thm}
\label{Theorem (mutation classes)} Let $\Gamma$ be a connected diagram.
Then $\Gamma$ is of mutation type 
\end{thm}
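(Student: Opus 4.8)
The assertion is really an equivalence: a connected diagram $\Gamma$ is of mutation type $\mathbb{A}$ (respectively $\mathbb{B}$, $\mathbb{D}$, $\mathbb{B}^{(1)}$, $\mathbb{C}^{(1)}$, $\mathbb{D}^{(1)}$) if and only if $\Gamma$ occurs as a finite connected full subdiagram of the corresponding infinite graph introduced in Section~\ref{sec:The--diagrams}, subject to the prescribed orientation and weight conventions. The plan is to prove the two implications separately, treating the six types in parallel. Write $D_X$ for the standard diagram of type $X$ and $\mathcal{G}_X$ for the associated family of diagrams. For the inclusion ``mutation class of $D_X$ $\subseteq\mathcal{G}_X$'' it suffices, since the mutation class is by definition the smallest set containing $D_X$ and closed under all mutations $\mu_v$, to check (i) that $D_X\in\mathcal{G}_X$, which is immediate, and (ii) that $\mathcal{G}_X$ is stable under every $\mu_v$. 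Step (ii) is the core of this direction: one uses the bounded vertex degrees and the short list of edge weights permitted in $\mathcal{G}_X$ to enumerate the possible local shapes of a diagram $\Gamma\in\mathcal{G}_X$ around a vertex $v$, applies the diagram-mutation rule to each, and recognises the outcome again inside $\mathcal{G}_X$. Because the members of $\mathcal{G}_X$ have a rigid global form --- a string of cyclically oriented triangles for $\mathbb{A}$, the same with a weighted terminal edge or a terminal fork for $\mathbb{B}$ and $\mathbb{D}$, and one extra distinguished cycle for the affine types --- only finitely many local configurations can occur, so (ii) reduces to a finite (if lengthy) case analysis.

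Conversely, for ``$\mathcal{G}_X\subseteq$ mutation class of $D_X$'' I would argue by induction on the number $n$ of vertices. After disposing of the finitely many small cases directly, the inductive step rests on the claim that every $\Gamma\in\mathcal{G}_X$ on $n$ vertices admits a vertex $v$ for which $\mu_v(\Gamma)$ again lies in $\mathcal{G}_X$ and is ``closer'' to $D_X$ --- for instance it contains a full subdiagram of type $X$ on $n-1$ vertices, to which the inductive hypothesis applies, or else the iteration of such moves simply terminates at $D_X$. Equivalently, one shows that the restriction of the mutation graph to $\mathcal{G}_X$ is connected with $D_X$ reachable from every vertex; since each $\mu_v$ is an involution, such connectivity yields both inclusions simultaneously. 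The finiteness result of Buan and Reiten \cite{Reiten} is convenient here, as it guarantees a priori that this graph is finite, so that the argument only has to traverse a finite object.

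The principal obstacle is the bookkeeping in the stability step (ii). The weight-$2$ edges of types $\mathbb{B}$ and $\mathbb{C}$, the branch vertex of type $\mathbb{D}$, and the distinguished cycle of the affine types generate a sizeable catalogue of local pictures, and one must verify that no mutation ever produces a forbidden feature --- an edge of weight $\geq 3$, a vertex of excessive degree, a second cycle, a non-cyclically oriented triangle --- and, crucially, that mutating within one family never escapes into another (for example $\mathbb{B}$ into $\mathbb{C}$, or a finite type into an affine one). A related subtlety is separating the three affine families from each other and from the finite ones; this is exactly what the fine structure of the infinite graphs (the number and type of special edges or forks, the orientation of the distinguished cycle) is designed to encode, and I expect the argument to be organised around mutation invariants of a diagram --- the multiset of edge weights, the existence of a unique cyclically oriented subgraph, an ``affine defect'' --- which keep the six families pairwise disjoint and pin down each mutation class as exactly one of them.
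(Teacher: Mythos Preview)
Your two-step plan --- (i) check that each family $\mathcal{G}_X$ is closed under mutation by a finite local case analysis, and (ii) show that every member of $\mathcal{G}_X$ can be mutated back to the standard diagram --- is exactly the paper's strategy; the closure step is what the paper records as Facts~\ref{Lemma: A mutation steps}--\ref{lem: D^(1) mutation steps}.

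Two points are worth correcting. First, your picture of the families is too coarse: already for finite type $\mathbb{D}$ the family $\mathcal{G}_{\mathbb{D}}$ is a disjoint union of several sub-families $D_{(\bigcirc,n)},D_{\square},D_{\boxslash},D_{\bot}$, mutations \emph{do} jump between these sub-families, and the diagrams are not simply ``a string of triangles with a terminal fork'' (the $D_{(\bigcirc,n)}$ diagrams carry a large central $n$-cycle). Likewise the forbidden-feature list is off: the $C_{B\wedge B}$ family for type $\mathbb{C}^{(1)}$ contains an edge of weight~$4$, so ``no edge of weight $\geq 3$'' is not an invariant you can use. The real content of the closure step is precisely tracking which sub-family one lands in after each mutation, and the paper's Facts do this exhaustively.

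Second, your converse argument by vertex-removal induction is what the paper does for types $\mathbb{A}$, $\mathbb{B}$ and $\mathbb{C}^{(1)}$, but it does \emph{not} work as stated for $\mathbb{D}$, $\mathbb{B}^{(1)}$, $\mathbb{D}^{(1)}$: deleting a vertex from, say, a $D_{(\bigcirc,n)}$ diagram need not leave a diagram in $\mathcal{G}_{\mathbb{D}}$ at all. For these types the paper instead uses the detailed mutation Facts to navigate within $\mathcal{G}_X$ from an arbitrary member to a single cyclically oriented $n$-cycle with a small appendage, and then invokes a separate ``cycle-shrinking'' lemma (Lemma~\ref{Lemma shrink cycle}) to unroll that cycle into the Dynkin tree. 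This reduction-to-a-normal-form argument, not an induction on $|\Gamma_0|$, is the missing ingredient in your sketch; the Buan--Reiten finiteness result plays no role.
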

\begin{enumerate}
\item $\mathbb{A}$ \emph{if and only if it is an $A$ diagram.} 
\item \emph{$\mathbb{B}$ if and only if it is a $B$ diagram.} 
\item \emph{$\mathbb{D}$ if and only if it is a $D_{\star}$ diagram, where
$\star\in\bigl\{(\bigcirc,n)_{n\geq3},\square,\boxslash,\bot\bigr\}$.} 
\item \emph{$\mathbb{B}^{(1)}$ if and only if it is a $B_{\star,B},B_{\star\wedge\star'}$
diagram, where $\star\in\bigl\{(\bigcirc,n)_{n\geq3},\square,\boxslash,\bot\bigr\}$
and $\star'\in\{B,\overleftrightarrow{B}\}$, or a $B_{\square\wedge\boxslash}$
diagram.} 
\item \emph{$\mathbb{C}^{(1)}$ if and only if it is a $C_{B,B}$ or $C_{B\wedge B}$
diagram.} 
\item \emph{$\mathbb{D}^{(1)}$ if and only if it is a $D_{\star,\star'},D_{\star\vee\star',}D_{\star\wedge\star'}$
diagram, where $\star,\star'\in\bigl\{(\bigcirc,n)_{n\geq3},\square,$}
$\boxslash,\bot,\overleftrightarrow{\boxslash}\bigr\}$\emph{, or
a $D_{\boxtimes}$ diagram.} 
\end{enumerate}
Let us briefly comment on the work which has been done concerning
the question of describing the mutation classes dealt with in this
paper. Following the classification of cluster algebras of finite
type by Fomin and Zelevinsky \cite{S.2003}, efforts were made to
find an effective way of determining whether a given diagram is mutation
equivalent to one with underlying graph of Dynkin type or not. Seven
\cite{Seven2007} gave an answer in terms of so called 'minimal forbidden
subdiagrams', while Barot, Gei\ss{} and Zelevinsky \cite{M.toappear} reduced the problem to
the question of cyclical orientability. The former result gives a
satisfying answer in the case of non simply-laced diagrams, but if
the diagram is simply-laced and contains more than nine vertices,
the result by Seven produces a checklist of more than two hundred
single diagrams plus five families of diagrams. Still, this list of
diagrams is very interesting since it coincides with the Happel-Vossieck
list \cite{D.1983}, which characterizes all minimal representation
infinite algebras. These connections were investigated in \cite{A.2007}.
It would be interesting to see whether the families of graphs produced
in our paper occur in a different context as well. Finally, let us
mention that the affine case $\mathbb{A}^{(1)}$ can be dealt with
in a similar way to our results. However, additional numerics are
required in order to parametrize the combinatorics involved, cf. the
work by Bastian \cite{Bastian}.

One example for applications of our results is the derived equivalence
classification of cluster-tilted algebras (i.e. the endomorphism algebras
of cluster-tilting objects in the cluster-tilting category), cf. \cite{A.2006,A.2007a}.
Using their own results mentioned above, this was done for type $\mathbb{A}$
by Buan and Vatne \cite{A.2008a} and for type $\mathbb{A}^{(1)}$
by Bastian \cite{Bastian}. \\

\noindent The methods applied in this paper are purely combinatorial.
In section \ref{sec:(Affine)-Dynkin-Diagrams} we recall the definitions
of the (affine) Dynkin diagrams occurring in Theorem \ref{Theorem (mutation classes)}
and the notion of diagram mutation. Section \ref{sec:The--diagrams}
then lists, for each (affine) Dynkin type, a certain number of infinite
undirected graphs inducing the families of diagrams mentioned in the
Theorem. A summary is given at the end of that section. The final
section \ref{sec:Proof} contains the proof of Theorem \ref{Theorem (mutation classes)}.\\

\textbf{Acknowledgement.} The author would like to express his sincere
thanks to the referees of {}``Mathematische Nachrichten'' for detailed
proofreading of the manuscript and their helpful suggestions. Also,
we would like to thank Sefi Ladkani and Maurizio Martino for their
help in improving the presentation of the material.

\section{(Affine) Dynkin Diagrams and Mutation of Diagrams\label{sec:(Affine)-Dynkin-Diagrams}}

\subsection{Some (affine) Dynkin Diagrams}

Recall the definitions of the (affine) Dynkin diagrams mentioned in
Theorem \ref{Theorem (mutation classes)}, which are 

\begin{itemize}
\item the Dynkin diagrams of type $\mathbb{A},\mathbb{B},\mathbb{D}$: 
\end{itemize}
\[
\xyR{.5pc}\xymatrix{A_{n} & \circ\ar@{-}'[r]'[rr] & \circ & \circ\ar@{}[r]|{\cdots} & \circ\\
B_{n} & \circ\ar@{-}[r]^{2} & \circ\ar@{-}[r] & \circ\ar@{}[r]|{\cdots} & \circ\\
D_{n} & \circ\ar@{-}'[r]'[rr] & \circ & \circ\ar@{}[r]|{\cdots} & \circ\\
 &  & \circ\ar@{-}[u]}
\]

\begin{itemize}
\item the affine Dynkin diagrams of type $\mathbb{B}^{(1)},\mathbb{C}^{(1)},\mathbb{D}^{(1)}$
(we also include type $\mathbb{A}^{(1)}$, which will be used in the
definitions and proofs):\[
\xyR{.5pc}\xymatrix{A_{n}^{(1)} &  & \circ & \circ\\
 & \circ\ar@{-}'[ur]'[urr]'[rrr]'[drr] &  &  & \circ\\
 &  & \circ\ar@{-}[ul] & \circ\ar@{}[l]|{\cdots}}
\]

\end{itemize}
\[
\xyR{.5pc}\xymatrix{B_{n}^{(1)} & \circ\ar@{-}'[r]'[rr] & \circ & \circ\ar@{}[r]|{\cdots} & \circ\ar@{-}[r]^{2} & \circ\\
 &  & \circ\ar@{-}[u]\\
C_{n}^{(1)} & \circ\ar@{-}[r]^{2} & \circ\ar@{-}[r] & \circ\ar@{}[r]|{\cdots} & \circ\ar@{-}[r] & \circ\ar@{-}[r]^{2} & \circ\\
D_{n} & \circ\ar@{-}'[r]'[rr] & \circ & \circ\ar@{}[r]|{\cdots} & \circ\ar@{-}[r] & \circ\ar@{-}[r]^{} & \circ\\
 &  & \circ\ar@{-}[u] &  &  & \circ\ar@{-}[u]}
\]

\subsection{Some Notations}

A graph $\Gamma=(\Gamma_{0},\Gamma_{1})$ is always given by its set
of vertices $\Gamma_{0}$ and its set of edges $\Gamma_{1}$. The
cardinality of $\Gamma_{0}$ is denoted by $|\Gamma_{0}|$ and for
$x\in\Gamma_{0}$, $\mbox{deg}(x)$ is the cardinality of the set
of neighbours of $x$ in the underlying undirected graph of $\Gamma$.
If $\Gamma$ is directed we also distinguish between $\mbox{deg}^{-}(x)$,
the number of neighbours $y$ of $x$ in $\Gamma$ such that there
exists an element $\{y\rightarrow x\}$ in $\Gamma_{1}$, and $\mbox{deg}^{+}(x):=\mbox{deg}(x)-\mbox{deg}^{-}(x)$.

Given any subset $S\subseteq\Gamma_{0}$, the \emph{induced subgraph}
\emph{on} $S$ is the subgraph of $\Gamma$ obtained from $\Gamma$
by deleting all vertices in $\Gamma_{0}\setminus S$ and all edges
adjacent to these. A subgraph $\Gamma'\subseteq\Gamma$ is called
\emph{full}, if $\Gamma'$ is the induced subgraph on $\Gamma'_{0}$.
A directed graph $\Gamma$ is called \emph{cyclically oriented} if
for all full subgraphs $\Gamma'\subseteq\Gamma$ with underlying graph
a cycle (i.e. equal to $A_{|\Gamma'_{0}|}^{(1)}$), $\Gamma'$ is
oriented clockwise or anti-clockwise.

For any (affine) Dynkin type $\star$ and $\Gamma$ any directed graph,
$\Gamma$ is called a $\star$ graph if the underlying undirected
graph of $\Gamma$ is of type $\star$.

\subsection{Diagram mutation}

Let $\Gamma$ be a finite directed graph whose edges are weighted
with positive integers. $\Gamma$ is called a \emph{diagram} if the
product of weights along any cycle is an integer which is the square
of an integer. For any vertex $k\in\Gamma_{0}$, the \emph{mutation}
\emph{of $\Gamma$ in $k$}, denoted by $\mu_{k}$, is the following
transformation of $\Gamma$:

\begin{itemize}
\item the orientations of all edges incident to $k$ are reversed, their
weights remain unchanged. 
\item for any vertices $i$ and $j$ which are connected in $\Gamma$ via
a two-edge oriented path going through $k$ (refer to the figure below
for the rest of notation), the direction of the edge $(i,j)$ in $\mu_{k}(\Gamma)$
and its weight $c'$ are uniquely determined by the rule \[
\pm\sqrt{c}\pm\sqrt{c'}=\sqrt{ab}\]
 where the sign before $\sqrt{c}$ (respectively before $\sqrt{c'}$)
is $+$ if $i,j,k$ form an oriented cycle in $\Gamma$ (resp., in
$\mu_{k}(\Gamma)$), and is $-$ otherwise. Here either $c$ or $c'$
can be equal to $0$. \[
\xyR{.7pc}\xymatrix{ & k\ar[dr]^{b} &  & \ar@{<->}[r]^{\mu_{k}} &  &  & k\ar[dl]_{a}^{}\\
i\ar[ur]^{a} &  & j\ar@{-}[ll]^{c} &  &  & i &  & j\ar@{-}[ll]^{c'}\ar[ul]_{b}^{}}
\]

\item the rest of the edges and their weights in $\Gamma$ remain unchanged. 
\end{itemize}
Note that the resulting weighted graph is a diagram again. Two diagrams
$\Gamma$ and $\Gamma'$ related by a sequence of mutations $\mu_{i_{1}}\circ\mu_{i_{2}}\circ\ldots\circ\mu_{i_{n}}$
, where $\{i_{j}\}_{1\leq j\leq n}\subseteq\Gamma_{0}$, are called
\emph{mutation equivalent}. Since $\mu_{k}$ is involutive, i.e. $\mu_{k}^{2}(\Gamma)=\Gamma$
, this defines an equivalence relation on the set of all diagrams.
The equivalence classes are called \emph{mutation classes.} The task
of understanding these classes is not trivial at all. For example,
we shall see that diagram $i)$ below is neither mutation equivalent
to diagram $ii)$ nor to diagram $iii)$, while $ii)$ and $iii)$
belong to the same mutation class.\[
\xyC{1.5pc}\xyR{.4pc}\xymatrix{i) &  & \circ\ar[dr] &  &  & ii) & \circ\ar[ddr] &  & iii) & \circ\ar[dr]\\
 & \circ\ar[dr]\ar[ur] &  & \circ\ar[ll]\ar[dr] &  &  & \circ\ar[dr] &  & \circ\ar[ur]\ar[dr] &  & \circ\ar[ll]\ar[dd]\\
\circ\ar[ur] &  & \circ\ar[ll]\ar[ur] &  & \circ\ar[ll]\ar[dl] & \circ\ar[dr]\ar[ur]\ar[uur] &  & \circ\ar@<.5ex>[ll]\ar@<-.5ex>[ll] &  & \circ\ar[ur]\ar[dl]\\
 &  &  & \circ &  &  & \circ\ar[dr]\ar[ur] &  & \circ\ar[rr]\ar[uu] &  & \circ\ar[ul]\ar[dl]\\
 &  &  &  &  & \circ\ar[ur] &  & \circ\ar[ll] &  & \circ\ar[ul]}
\]
It is well known that for diagrams with underlying undirected graph
being a tree, the mutation class is independent of the particular
orientation (cf. \cite{S.2003} Prop. 9.2). Hence we say that for
$\star$ an (affine) Dynkin type other than $\mathbb{A}^{(1)}$, a
diagram $\Gamma$ is of \emph{mutation type} $\star$ if it is mutation
equivalent to a diagram with underlying undirected graph of type $\star$.
Finally, a diagram is called \emph{simply-laced} if all its edge weights
are equal to one.

\begin{rem}
Let us stress that for the rest of this paper, the term 'diagram'
will refer exclusively to a directed and weighted graph in the sense
just described. Hence the word will never refer to an (affine) Dynkin
diagram nor will there be any possibility for confusion with the common
meaning of a diagram, which is that of an undirected graph. 
\end{rem}

\section{\label{sec:The--diagrams}The diagrams }

\noindent The following notion will be involved in the definition
of all diagrams for any type.

\begin{defn}
\label{def:nabla}By a \emph{$(\nabla,x)$ graph} we refer to the
infinite undirected graph$$ \begin{xy} 0;<0.25pt,0pt>:<0pt,-0.24pt>::  (237,351) *+{x} ="0", (375,202) *+{\circ} ="1", (100,202) *+{\circ} ="2", (436,101) *+{\circ} ="3", (310,101) *+{\circ} ="4", (162,101) *+{\circ} ="5", (36,101) *+{\circ} ="6", (475,27) *+{\circ} ="7", (400,27) *+{\circ} ="8", (350,27) *+{\circ} ="9", (275,27) *+{\circ} ="10", (200,27) *+{\circ} ="11", (125,27) *+{\circ} ="12", (75,27) *+{\circ} ="13", (0,27) *+{\circ} ="14", (36,-14) *+{\vdots} ="15", (162,-14) *+{\vdots} ="16", (314,-14) *+{\vdots} ="17", (441,-14) *+{\vdots} ="18", "1", {\ar@{-}"0"}, "0", {\ar@{-}"2"}, "2", {\ar@{-}"1"}, "3", {\ar@{-}"1"}, "1", {\ar@{-}"4"}, "5", {\ar@{-}"2"}, "2", {\ar@{-}"6"}, "4", {\ar@{-}"3"}, "7", {\ar@{-}"3"}, "3", {\ar@{-}"8"}, "9", {\ar@{-}"4"}, "4", {\ar@{-}"10"}, "6", {\ar@{-}"5"}, "11", {\ar@{-}"5"}, "5", {\ar@{-}"12"}, "13", {\ar@{-}"6"}, "6", {\ar@{-}"14"}, "8", {\ar@{-}"7"}, "10", {\ar@{-}"9"}, "12", {\ar@{-}"11"}, "14", {\ar@{-}"13"}, \end{xy} $$with
uniquely determined vertex $x$. It is obtained by starting with an
infinite binary tree and joining the two children of each vertex by
an edge. We also write $\nabla$ graph for short. 
\end{defn}
\noindent Before we give the diagrams for the particular (affine)
Dynkin types, note that the notation used in parts (4) and (6) of
Theorem \ref{Theorem (mutation classes)} is slightly imprecise, as
it suggests the existence of some families of diagrams which will
not be defined, cf. the summary in subsection \ref{subsection: summary}.

\subsection{The diagrams for type $\mathbb{A}$}

\begin{defn}
\label{def: A}$A$ is the infinite undirected graph obtained by the
following construction. Let $(\nabla_{1}$$,x_{1}$) and $(\nabla_{2},x_{2}$)
be two disjoint $\nabla$ graphs. Identify $x_{1}$ and $x_{2}$.$$ \begin{xy} 0;<.4pt,0pt>:<0pt,-.3pt>:: (150,50) *+{\circ} ="0", (100,75) *+{\circ} ="1", (150,100) *+{\circ} ="2", (50,50) *+{\circ} ="3", (50,100) *+{\circ} ="4", (50,0) *+{\circ} ="5", (0,50) *+{\circ} ="6", (150,0) *+{\circ} ="7", (200,50) *+{\circ} ="8", (200,100) *+{\circ} ="9", (150,150) *+{\circ} ="10", (0,100) *+{\circ} ="11", (50,150) *+{\circ} ="12", (0,150) *+{\iddots} ="13", (0,0) *+{\ddots} ="14", (200,0) *+{\iddots} ="15", (200,150) *+{\ddots} ="16", "1", {\ar@{-}"0"}, "0", {\ar@{-}"2"}, "0", {\ar@{-}"7"}, "8", {\ar@{-}"0"}, "2", {\ar@{-}"1"}, "3", {\ar@{-}"1"}, "1", {\ar@{-}"4"}, "2", {\ar@{-}"9"}, "10", {\ar@{-}"2"}, "4", {\ar@{-}"3"}, "5", {\ar@{-}"3"}, "3", {\ar@{-}"6"}, "11", {\ar@{-}"4"}, "4", {\ar@{-}"12"}, "6", {\ar@{-}"5"}, "7", {\ar@{-}"8"}, "9", {\ar@{-}"10"}, "12", {\ar@{-}"11"}, \end{xy} $$An
\textbf{\emph{$A$}} \emph{diagram} is a cyclically oriented simply-laced
diagram with underlying undirected graph equal to a connected and
full subgraph of $A$. 
\end{defn}

\subsection{The diagrams for type $\mathbb{B}$}

\begin{defn}
\label{def:B}Let $B$ be the infinite undirected weighted graph obtained
by the following construction. Let $(\nabla,x)$ be a $\nabla$ graph.
Assign weight two to both edges having $x$ as one endpoint and weight
one to all others.

\[
\xyR{.01pc}\xyC{.4pc}\xymatrix{\ddots & \circ &  &  &  & \circ & \iddots\\
\circ & \circ\ar@{-}'[l]'[u]'[]'[dr]'[dd] &  &  &  & \circ\ar@{-}'[u]'[r]'[]'[dl]'[dd] & \circ\\
 &  & \circ\ar@{-}[rr] &  & \circ\ar@{-}[ddl]^{2}\\
\circ & \circ\ar@{-}'[l]'[d]'[]'[uu] &  &  &  & \circ\ar@{-}'[r]'[d]'[]'[uu] & \circ\\
\iddots & \circ &  & x\ar@{-}[uul]^{2} &  & \circ & \ddots}
\]
 A \textbf{\emph{$B$}} \emph{diagram} is a cyclically oriented diagram
with underlying undirected weighted graph equal to a connected and
full subgraph of $B$ containing the vertex $x$. 
\end{defn}

\subsection{The diagrams for type $\mathbb{D}$}

\begin{defn}
\label{def:D}1) For $n\in\mathbb{N}_{\geq3}$ let $D_{(\bigcirc,n)}$
denote the infinite undirected graph which is obtained by the following
construction. Let $\{a_{1},..,a_{n}\}$ be the set of vertices of
an $A_{n-1}^{(1)}$ graph labeled clockwise, $(\nabla_{i},x_{i})_{1\leq i\leq n}$
be $\nabla$ graphs. For all $1\leq i\leq n$ join $x_{i}$ to both
$a_{i}$ and $a_{i+1}$ by single edges, where $a_{n+1}$ denotes
the vertex $a_{1}$.$$ \begin{xy} 0;<0.9pt,0pt>:<0pt,-0.6pt>::  (154,84) *+{a_n} ="0", (196,112) *+{a_1} ="1", (224,154) *+{a_2} ="2", (196,195) *+{a_3} ="3", (154,223) *+{a_4} ="4", (113,195) *+{a_5} ="5", (85,154) *+{a_6} ="6", (113,112) *+{} ="7", (196,70) *+{x_n} ="8", (237,126) *+{x_1} ="9", (237,181) *+{x_2} ="10", (196,237) *+{x_3} ="11", (113,237) *+{x_4} ="12", (71,181) *+{x_5} ="13", (71,126) *+{} ="14", (113,70) *+{} ="15", (182,43) *+{\circ} ="16", (224,57) *+{\circ} ="17", (251,98) *+{\circ} ="18", (265,126) *+{\circ} ="19", (265,181) *+{\circ} ="20", (251,209) *+{\circ} ="21", (224,251) *+{\circ} ="22", (182,264) *+{\circ} ="23", (127,264) *+{\circ} ="24", (85,251) *+{\circ} ="25", (57,209) *+{\circ} ="26", (43,181) *+{\circ} ="27", (43,126) *+{} ="28", (57,98) *+{} ="29", (85,57) *+{} ="30", (127,43) *+{} ="31", (168,15) *+{\circ} ="32", (196,15) *+{\circ} ="33", (224,29) *+{\circ} ="34", (251,43) *+{\circ} ="35", (251,70) *+{\circ} ="36", (279,84) *+{\circ} ="37", (293,112) *+{\circ} ="38", (293,140) *+{\circ} ="39", (293,167) *+{\circ} ="40", (293,195) *+{\circ} ="41", (279,223) *+{\circ} ="42", (251,237) *+{\circ} ="43", (251,264) *+{\circ} ="44", (224,278) *+{\circ} ="45", (196,292) *+{\circ} ="46", (168,292) *+{\circ} ="47", (140,292) *+{\circ} ="48", (113,292) *+{\circ} ="49", (85,278) *+{\circ} ="50", (57,264) *+{\circ} ="51", (57,237) *+{\circ} ="52", (30,223) *+{\circ} ="53", (16,195) *+{\circ} ="54", (16,167) *+{\circ} ="55", (16,140) *+{} ="56", (16,112) *+{} ="57", (30,84) *+{} ="58", (57,70) *+{} ="59", (57,43) *+{} ="60", (85,29) *+{} ="61", (113,15) *+{} ="62", (140,15) *+{} ="63", (181,0) *+{\vdots} ="64", (243,22) *+{\iddots} ="65", (271,65) *+{\iddots} ="66", (307,125) *+{\cdots} ="67", (308,182) *+{\cdots} ="68", (271,239) *+{\ddots} ="69", (244,279) *+{\ddots} ="70", (180,305) *+{\vdots} ="71", (125,305) *+{\vdots} ="72", (66,279) *+{\iddots} ="73", (37,239) *+{\iddots} ="74", (3,183) *+{\cdots} ="75", (0,123) *+{\cdots} ="76", (35,65) *+{\ddots} ="77", (60,22) *+{\ddots} ="78", (125,1) *+{\vdots} ="79", "0", {\ar@{-}"1"}, "7", {\ar@{--}"0"}, "8", {\ar@{-}"0"}, "0", {\ar@{--}"15"}, "1", {\ar@{-}"2"}, "1", {\ar@{-}"8"}, "9", {\ar@{-}"1"}, "2", {\ar@{-}"3"}, "2", {\ar@{-}"9"}, "10", {\ar@{-}"2"}, "3", {\ar@{-}"4"}, "3", {\ar@{-}"10"}, "11", {\ar@{-}"3"}, "4", {\ar@{-}"5"}, "4", {\ar@{-}"11"}, "12", {\ar@{-}"4"}, "5", {\ar@{-}"6"}, "5", {\ar@{-}"12"}, "13", {\ar@{-}"5"}, "6", {\ar@{--}"7"}, "6", {\ar@{-}"13"}, "14", {\ar@{--}"6"}, "7", {\ar@{--}"14"}, "15", {\ar@{--}"7"}, "16", {\ar@{-}"8"}, "8", {\ar@{-}"17"}, "18", {\ar@{-}"9"}, "9", {\ar@{-}"19"}, "20", {\ar@{-}"10"}, "10", {\ar@{-}"21"}, "22", {\ar@{-}"11"}, "11", {\ar@{-}"23"}, "24", {\ar@{-}"12"}, "12", {\ar@{-}"25"}, "26", {\ar@{-}"13"}, "13", {\ar@{-}"27"}, "28", {\ar@{--}"14"}, "14", {\ar@{--}"29"}, "30", {\ar@{--}"15"}, "15", {\ar@{--}"31"}, "17", {\ar@{-}"16"}, "32", {\ar@{-}"16"}, "16", {\ar@{-}"33"}, "34", {\ar@{-}"17"}, "17", {\ar@{-}"35"}, "19", {\ar@{-}"18"}, "36", {\ar@{-}"18"}, "18", {\ar@{-}"37"}, "38", {\ar@{-}"19"}, "19", {\ar@{-}"39"}, "21", {\ar@{-}"20"}, "40", {\ar@{-}"20"}, "20", {\ar@{-}"41"}, "42", {\ar@{-}"21"}, "21", {\ar@{-}"43"}, "23", {\ar@{-}"22"}, "44", {\ar@{-}"22"}, "22", {\ar@{-}"45"}, "46", {\ar@{-}"23"}, "23", {\ar@{-}"47"}, "25", {\ar@{-}"24"}, "48", {\ar@{-}"24"}, "24", {\ar@{-}"49"}, "50", {\ar@{-}"25"}, "25", {\ar@{-}"51"}, "27", {\ar@{-}"26"}, "52", {\ar@{-}"26"}, "26", {\ar@{-}"53"}, "54", {\ar@{-}"27"}, "27", {\ar@{-}"55"}, "29", {\ar@{--}"28"}, "56", {\ar@{--}"28"}, "28", {\ar@{--}"57"}, "58", {\ar@{--}"29"}, "29", {\ar@{--}"59"}, "31", {\ar@{--}"30"}, "60", {\ar@{--}"30"}, "30", {\ar@{--}"61"}, "62", {\ar@{--}"31"}, "31", {\ar@{--}"63"}, "33", {\ar@{-}"32"}, "35", {\ar@{-}"34"}, "37", {\ar@{-}"36"}, "39", {\ar@{-}"38"}, "41", {\ar@{-}"40"}, "43", {\ar@{-}"42"}, "45", {\ar@{-}"44"}, "47", {\ar@{-}"46"}, "49", {\ar@{-}"48"}, "51", {\ar@{-}"50"}, "53", {\ar@{-}"52"}, "55", {\ar@{-}"54"}, "57", {\ar@{--}"56"}, "59", {\ar@{--}"58"}, "61", {\ar@{--}"60"}, "63", {\ar@{--}"62"}, \end{xy} $$The
subgraph of $D_{(\bigcirc,n)}$ induced on $\{a_{i}\}_{1\leq i\leq n}$
be denoted by $(\bigcirc,n)$. A \emph{$D_{(\bigcirc,n)}$ diagram}
$\Gamma$ is a cyclically oriented simply-laced diagram with underlying
undirected graph equal to a connected full subgraph of $D_{(\bigcirc,n)}$
containing the vertices of $(\bigcirc,n)$ such that $|\Gamma_{0}|\geq5$.

\noindent 2) $D_{\square}$ is the infinite undirected graph obtained
by the following construction. Let $a_{1},a_{2}$ be two single vertices
and $(\nabla_{i},x_{i})_{1\leq i\leq2}$ be two $\nabla$ graphs.
Join each of $x_{1},x_{2}$ to each of $a_{1},a_{2}$ by a single
edge.

By $\square$ we denote the subgraph of $D_{\square}$ induced on
$\{x_{1},a_{1},x_{2},a_{2}\}$. A \emph{$D_{\square}$ diagram} is
a cyclically oriented simply-laced diagram with underlying undirected
graph equal to a connected full subgraph of $D_{\square}$ containing
the vertices of $\square$.

\[
\xyR{.01pc}\xyC{.2pc}\xymatrix{\ddots & \circ & D_{\square} & \circ & \iddots &  &  & \ddots & \circ & D_{\boxslash} & \circ & \iddots &  &  &  &  & D_{\bot}\\
\circ & \circ\ar@{-}'[l]'[u]'[]'[dr]'[rr]'[] &  & \circ\ar@{-}'[r]'[u]'[] & \circ &  &  & \circ & \circ\ar@{-}'[l]'[u]'[]'[rr] &  & \circ\ar@{-}'[r]'[u]'[]'[dl]'[ll] & \circ &  &  & \ddots & \circ &  & \circ & \iddots\\
 &  & x_{1}\ar@{-}'[dl]'[dd]'[dr]'[] &  &  &  &  &  &  & x_{1}\ar@{-}'[dl]'[dd]'[dr]'[]'[dd] &  &  &  &  & \circ & \circ\ar@{-}'[l]'[u]'[]'[rr] &  & \circ\ar@{-}'[r]'[u]'[]'[dl]'[ll] & \circ\\
 & a_{1} &  & a_{2} &  &  &  &  & a_{1} &  & a_{2} &  &  &  &  &  & x_{1}\\
 &  & x_{2} &  &  &  &  &  &  & x_{2} &  &  &  &  &  & a_{1}\ar@{-}'[ur]'[rr] &  & a_{2}\\
\circ & \circ\ar@{-}'[l]'[d]'[]'[ur]'[rr] &  & \circ\ar@{-}'[r]'[d]'[]'[ll] & \circ &  &  & \circ & \circ\ar@{-}'[l]'[d]'[]'[rr] &  & \circ\ar@{-}'[r]'[d]'[]'[ul]'[ll] & \circ\\
\iddots & \circ &  & \circ & \ddots &  &  & \iddots & \circ &  & \circ & \ddots}
\]

\noindent 3) Let $D_{\boxslash}$ denote the infinite undirected
graph which is obtained by inserting one edge between $x_{1}$ and
$x_{2}$ in $D_{\square}$.

The subgraph of $D_{\boxslash}$ induced by $\{a_{i},x_{i}\}_{1\leq i\leq2}$
is denoted by $\boxslash$. A $D_{\boxslash}$ \emph{diagram} is a
cyclically oriented simply-laced diagram with underlying undirected
graph equal to a connected full subgraph of $D_{\boxslash}$ containing
the vertices of $\boxslash$.\\

\noindent 4) $D_{\bot}$ is the infinite undirected graph obtained
by deleting the $\nabla$ graph attached to $x_{2}$ in $D_{\square}$
including the vertex $x_{2}$.

By $\bot$ we denote the subgraph of $D_{\bot}$ induced on $\{a_{1},a_{2},x_{1}\}$.
A $D_{\bot}$ \emph{diagram} is a cyclically oriented simply-laced
diagram with underlying undirected graph equal to a connected and
full subgraph of $D_{\bot}$ containing the vertices of $\bot$. 
\end{defn}
\begin{rem}
Note that the families of $D_{\star}$ diagrams, where $\star\in\bigl\{(\bigcirc,n)_{n\geq3},\square,\boxslash,\bot\bigr\}$,
are mutually disjoint. Indeed, the crucial point is that: 
\end{rem}
\begin{itemize}
\item a $D_{(\bigcirc,4)}$ diagram can always be distinguished from a $D_{\square}$
diagram. This is because the first one is required to obtain at least
five vertices, so it always contains a subgraph of the form \emph{i)}
as shown below. But a $D_{\square}$ diagram never does. 
\item a $D_{(\bigcirc,3)}$ diagram $\Gamma$ can always be distinguished
from a $D_{\boxslash}$ diagram even if two of $\{x_{i}\}_{1\leq i\leq3}$
are absent (cf. \emph{ii)} for the general case). This is because
the first one always contains a subgraph of the form \emph{iii)} since
$|\Gamma_{0}|\geq5$, while the second one never does. 
\end{itemize}
\[
\xyR{.05pc}\xyC{.7pc}\xymatrix{i) &  &  &  &  & ii) &  &  & \vdots &  &  &  &  & iii)\\
 &  &  &  &  &  &  & \circ &  & \circ\\
\circ\ar@{-}'[rr]'[ddrr]'[dd]'[] &  & \circ &  &  &  &  &  & x_{3}\ar@{-}'[ul]'[ur]'[]\ar@{-}'[dl]'[dr]'[] &  &  &  &  & \circ\ar@{-}'[r]'[dd]'[] & \circ\\
 &  &  & \circ\ar@{-}'[dl]'[ul]'[] &  &  &  & a_{3}\ar@{-}'[dl]'[dr]'[] &  & a_{1}\ar@{-}'[dl]'[dr]'[]\\
\circ &  & \circ &  &  & \circ & x_{2}\ar@{-}'[l]'[d]'[] &  & a_{2} &  & x_{1}\ar@{-}'[d]'[r]'[] & \circ &  & \circ & \circ\ar@{-}'[l]'[uu]'[]'[d]\\
 &  &  &  &  & \iddots & \circ &  &  &  & \circ & \ddots &  &  & \circ}
\]

\subsection{The diagrams for type $\mathbb{B}^{(1)}$\label{sub:The-diagrams-for B^(1)}}

Loosely speaking, what should a diagram of mutation type $\mathbb{B}^{(1)}$
look like? We would expect it to be a diagram obtained by somehow
gluing a diagram of mutation type $\mathbb{B}$ with a diagram of
mutation type $\mathbb{D}$. Indeed, this is partially true. For example,
we will encounter all diagrams induced by connected subgraphs of either
of the following infinite graphs containing both 'characteristic subgraphs'
of $B$ and $D_{\boxslash}$\[
\xyR{.01pc}\xyC{.4pc}\xymatrix{ & \circ &  &  &  & \circ\\
\dots &  & \circ\ar@{-}'[dl]'[ul]'[] &  & \circ\ar@{-}'[ur]'[dr]'[] &  & \dots &  &  &  &  &  &  &  &  & \ddots & \circ\ar@{-}'[d]'[dl]'[] &  & \circ\ar@{-}'[dr]'[d]'[] & \iddots\\
 & \circ &  & \circ\ar@{-}'[ul]'[ur]'[] &  & \circ &  &  &  &  &  &  &  &  &  & \circ & \circ &  & \circ & \circ\\
 &  & \circ &  & \circ &  &  &  &  &  &  &  &  &  &  &  &  & \circ\ar@{-}'[ul]'[ur]'[]\\
 & \circ &  & \circ\ar@{-}'[ul]'[uu]'[]'[ur]'[uu] &  &  &  & \circ & \iddots &  &  &  & \ddots & \circ &  &  & \circ &  & \circ\\
\dots &  & \circ\ar@{-}'[ul]'[dl]'[] & \circ\ar@{-}'[l]'[u]'[]'[dr]'[dd] &  &  &  & \circ\ar@{-}'[u]'[r]'[]'[dl]'[dd] & \circ &  &  &  & \circ & \circ\ar@{-}'[l]'[u]'[]'[dr]'[dd] &  &  &  & \circ\ar@{-}'[ur]'[uu]'[]'[ul]'[uu]\\
 & \circ &  &  & \circ\ar@{-}[rr] &  & \circ\ar@{-}[ddl]^{2} &  &  &  &  &  &  &  & \circ\ar@{-}[rr] &  & \circ\ar@{-}[ddl]^{2}\\
 &  & \circ & \circ\ar@{-}'[l]'[d]'[]'[uu] &  &  &  & \circ\ar@{-}'[r]'[d]'[]'[uu] & \circ &  &  &  & \circ & \circ\ar@{-}'[l]'[d]'[]'[uu] &  &  &  & \circ\ar@{-}'[r]'[d]'[]'[uu]'[ul]'[] & \circ\\
 &  & \iddots & \circ &  & \circ\ar@{-}[uul]^{2} &  & \circ & \ddots &  &  &  & \iddots & \circ &  & \circ\ar@{-}[uul]^{2} &  & \circ & \ddots}
\]
 However, the description of these families and their behaviour under
mutation poses a problem concerning notation. As the reader might
guess, we can 'glue' the infinite graphs $B$ and $D_{\star}$, where
$\star\in\{(\bigcirc,n)_{n\geq3},\square,\boxslash,\bot\}$, at any
point of a $\nabla$ graph. But if we were to continue with our habit
of labeling diagrams by the infinite graphs they correspond to and
additionally keep track of the point of the gluing, this would produce
far more labels than we care for. After all, the subdiagrams we obtain
from say the above two graphs both locally look like a $B$ or $D_{\boxslash}$
diagram, with some connection via the $\nabla$ part. Thus, their
behaviour under mutation 'mostly coincides', even though they were
obtained by different ways of gluing.

Hence we decide to change our strategy concerning notation: we allow
for a small amount of ambiguity in the construction of diagrams (i.e.
we ignore the gluing point) in order to concentrate on the defining
characteristic of the families of diagrams we are interested in. This
is made precise by

\begin{defn}
\label{def: B_star_B}For any $\star\in\{(\bigcirc,n)_{n\geq3},\square,\boxslash,\bot\}$
a $B_{\star,B}$ \emph{diagram} is a diagram obtained by the following
construction. Let $\Gamma$ be a $D_{\star}$ diagram, $\Gamma'$
be a $B$ diagram and fix the notations of definitions \ref{def:D}
and \ref{def:B}, where we add primes to the labels used in \ref{def:B}.
Choose a vertex $y$ of $\Gamma$ which is not labeled $a_{i}$ and
$y'\in\Gamma_{0}'\setminus\{x'\}$ such that the following is true: 
\end{defn}
\begin{itemize}
\item if $y\in\star_{0}$ then all neighbours of $y$ are contained in $\star_{0}$. 
\item $y'$ is of degree at most two, as is $y$ if $y\notin\star_{0}$. 
\end{itemize}
Then identify $y$ and $y'$.

Given a $B_{\star,B}$ diagram $\bar{\Gamma}$, let $\omega_{\bar{\Gamma}}$
be the shortest of all paths in the underlying undirected graph starting
in any of $x_{i}\in\Gamma_{0}$ and ending in $x'\in\Gamma'_{0}$
(again cf. the notation of definitions \ref{def: A} and \ref{def:B}).
Then the width $w(\bar{\Gamma})$ of $\bar{\Gamma}$ is defined as
the length of $\omega_{\bar{\Gamma}}$ minus one.

\begin{example}
We give two examples, the first showing a $B_{(\bigcirc,4),B}$ diagram
of width two, the second a $B_{\boxslash,B}$ of width zero. In the
first case, $\star=(\bigcirc,4)$ so $\star_{0}=\{a_{1},a_{2},a_{3},a_{4}\}\cup\{x_{1},x_{2},x_{3},x_{4}\}$,
while in the second $\star=\boxslash$, hence $\star_{0}=\{a_{1},a_{2}\}\cup\{x_{1},x_{2}\}$.
Note that, as exemplified in i), $\star_{0}$ is not necessarily contained
in the vertex set of every $B_{\star,B}$ diagram. \[
\xyR{.7pc}\xyC{.7pc}\xymatrix{i) & a_{1}\ar[rr] &  & a_{2}\ar[dd] &  & \circ\ar[d]\ar[r] & \circ &  & ii) & \circ\ar[d] &  & a_{1}\ar[dl] &  & \circ\ar[rr] &  & \circ\ar[dl]\\
 &  &  &  & x_{2}\ar[ur]\ar[ul] & \circ\ar[l]\ar[r] & \circ\ar[ddr]^{2} &  &  & \circ\ar[r] & x_{2}\ar[ul]\ar[rr] &  & x_{1}\ar[ul]\ar[dl]\ar[rr] &  & \circ\ar[ddl]^{2}\ar[ul]\\
 & a_{4}\ar[uu]\ar[dr] &  & a_{3}\ar[ll]\ar[ur] & \circ\ar[d] & \circ\ar[ur] &  &  &  & \circ\ar[u] &  & a_{2}\ar[ul]\\
 &  & x_{3}\ar[ur] & \circ\ar[l]\ar[ur] & \circ\ar[l] &  &  & x'\ar[ull]^{2} &  &  &  &  &  & x'\ar[uul]^{2}}
\]

\end{example}
\begin{rem}
\label{rem:only class of B_star_B}Note that for any $\star\in\{(\bigcirc,n)_{n\geq3},\square,\boxslash,\bot\}$
there are usually many different pairs of $D_{\star}$ and $B$ diagrams
which yield the same $B_{\star,B}$ diagram (for certain choices of
vertices $y$ and $y'$ as described in the construction of definition
\ref{def: B_star_B}). Cf. \emph{i)} in the example above, where 'splitting'
at any vertex in the horizontal line of $x_{2}$ yields such a pair.
Let us stress that - as discussed in the beginning of this section
- we are not interested in the combinatorics of such 'splittings',
but only in the classes of $B_{\star,B}$ diagrams, where $\star\in\{(\bigcirc,n)_{n\geq3},\square,\boxslash,\bot\}$,
themselves (cf. Remarks \ref{rem:only class of C_B_B} and \ref{rem:only class of D_star_star'}). 
\end{rem}
Let us assume for a moment that we had already worked through Facts
\ref{Lemma: B mutations steps} and \ref{Lemma: D mutation steps}.
Mutation being a local concept, we would then expect that mutating
any member $\Gamma$ of the family of diagrams just defined yields
just another member of this family in 'most cases'. Indeed, this will
turn out to be true for $\omega(\Gamma)>1$ (Fact \ref{lem: B^(1) mutation steps}
1) a)), but also in the case where $\omega(\Gamma)=0$ and mutation
is performed at any point except the starting point of $\omega_{\Gamma}$
in the '$D_{\star}$ diagram part' of $\Gamma$ (cf. Fact \ref{lem: B^(1) mutation steps}
1) b)). Clearly, the question arises what happens if we mutate in
case $\omega(\Gamma)=0$ at the vertex just mentioned. As it turns
out, we might encounter members of some new families of diagrams (cf.
Fact \ref{lem: B^(1) mutation steps} 1) c)), which are:

\begin{defn}
\label{def: B_star_wedge_B} 1) Let $B_{\star\wedge B}$, where $\star\in\{\square,\boxslash\}$,
be the infinite undirected weighted graph obtained by the following
construction. Delete the $\nabla$ graph attached to $x_{1}$ in $D_{\star}$
without removing $x_{1}$. Assign weight two to all edges with endpoint
$x_{1}$ and weight one to all others.

By $\star\wedge B$ we denote the subgraph of $B_{\star\wedge B}$
induced on $\{x_{i},a_{i}\}_{1\leq i\leq2}$. A \textbf{\emph{$B_{\star\wedge B}$}}
\emph{diagram} is a cyclically oriented diagram with underlying undirected
weighted graph equal to a connected full subgraph of $B_{\star\wedge B}$
containing the vertices of $\star\wedge B$.

\[
\xyR{.05pc}\xyC{.38pc}\xymatrix{B_{\square\wedge B} & a_{2}\ar@{-}[ddr]^{}\ar@{-}[ddl]_{}^{2} &  & \circ & \iddots & B_{\boxslash\wedge B} &  & a_{2}\ar@{-}[ddr]^{}\ar@{-}[ddll]_{}^{2} &  & \circ & \iddots & B_{\boxslash\wedge\square} &  & a_{2}\ar@{=}[dddd]\ar@{-}[ddll]_{}^{2} &  & \circ & \iddots\\
 &  &  & \circ\ar@{-}'[u]'[r]'[] & \circ &  &  &  &  & \circ\ar@{-}'[u]'[r]'[] & \circ &  &  &  &  & \circ\ar@{-}'[u]'[r]'[] & \circ\\
x_{1} &  & x_{2}\ar@{-}'[ur]'[dr]'[] &  &  & x_{1}\ar@{-}[rrr]_{2}^{} &  &  & x_{2}\ar@{-}'[ur]'[dr]'[] &  &  & x_{1} &  &  & x_{2}\ar@{-}'[ur]'[dr]'[]\\
 &  &  & \circ\ar@{-}'[d]'[r]'[] & \circ &  &  &  &  & \circ\ar@{-}'[d]'[r]'[] & \circ &  &  &  &  & \circ\ar@{-}'[d]'[r]'[] & \circ\\
 & a_{1}\ar@{-}[uur]\ar@{-}[uul]_{2}^{} &  & \circ & \ddots &  &  & a_{1}\ar@{-}[uur]\ar@{-}[uull]^{2} &  & \circ & \ddots &  &  & a_{1}\ar@{-}'[uur]'[uuuu]\ar@{-}[uull]_{2}^{} &  & \circ & \ddots}
\]
2) $B_{\boxslash\wedge\square}$ is the infinite undirected graph
obtained by joining $a_{1}$ and $a_{2}$ in $B_{\square\wedge B}$
by two edges of weight one.

The subgraph of $B_{\boxslash\wedge\square}$ induced on $\{x_{i},a_{i}\}_{1\leq i\leq2}$
is denoted $\boxslash\wedge\square$. A \textbf{\emph{$B_{\boxslash\wedge\square}$}}
\emph{diagram} is a cyclically oriented diagram with underlying undirected
weighted graph equal to a connected full subgraph of $B_{\boxslash\wedge\square}$
containing the vertices of $\boxslash\wedge\square$.\\

\noindent 3) Let $B_{(\bigcirc,n)\wedge B}$ be the infinite undirected
weighted graph obtained by the following construction. Delete the
$\nabla$ graph attached to $x_{1}$ in $D_{(\bigcirc,n)}$ without
removing the vertex itself. Assign weight two to both edges with endpoint
$x_{1}$ and weight one to all others.

For $\star\in\{B,\overleftrightarrow{B}\}$ the subgraph of $B_{(\bigcirc,n)\wedge B}$
induced on $\{a_{i}\}_{1\leq i\leq n}\cup\{x_{1}\}$ is denoted by
$(\bigcirc,n)\wedge\star$ (cf. Remark \ref{rem: B vs overarrow B business}).
A $B_{(\bigcirc,n)\wedge\star}$ \emph{diagram} is a diagram $\Gamma$
with underlying undirected weighted graph equal to a connected full
subgraph of $B_{(\bigcirc,n)\wedge B}$ containing the vertices of
$(\bigcirc,n)\wedge\star$ such that the following conditions are
satisfied: 
\end{defn}
\begin{itemize}
\item if $\star=B$, then $\Gamma$ is cyclically oriented. 
\item if $\star=\overleftrightarrow{B}$, let $e$ denote the edge connecting
$a_{1}$ and $a_{2}$. Then the graph $(\Gamma_{0},\Gamma_{1}\setminus e)$
is cyclically oriented, while the subgraph induced on $\{a_{i}\}_{1\leq i\leq n}$
is not. 
\end{itemize}
\begin{rem}
\label{rem: B vs overarrow B business} The reader should not get
confused by the fact that the symbol $\overleftrightarrow{B}$ itself
is not defined. In particular it does not refer to any particular
subgraph of some infinite graph as do say $\square$ and $\boxslash$
. The notation simply serves as a mnemonic device. All that we care
for is the subgraph $(\bigcirc,n)\wedge\overleftrightarrow{B}$, which
is actually the same as $(\bigcirc,n)\wedge B$, since both do not
carry any orientation. It is the family of diagrams associated to
them after a particular choice of orientation that justifies the choice
of notation. 
\end{rem}
\begin{example}
The first example is a $B_{(\bigcirc,3)\wedge B}$, the second a $B_{(\bigcirc,3)\wedge\overleftrightarrow{B}}$
diagram. We indicate possible enlargements by dots.\[
\xyR{.5pc}\xyC{.5pc}\xymatrix{i) &  &  & x_{1}\ar[ddr]^{2} &  &  &  &  &  & ii) &  & x_{1}\ar[ddl]_{2}\\
\\ &  & a_{1}\ar[dr]\ar[uur]^{2} &  & a_{2}\ar[ll]\ar[dr] &  &  &  &  &  & a_{1}\ar[dr]\ar[rr] &  & a_{2}\ar[uul]_{2}^{}\ar[dr]\\
\circ\ar[dr] & x_{3}\ar[l]\ar[ur] &  & a_{3}\ar[ur]\ar[ll] &  & x_{2}\ar[ll]\ar[d] & \circ\ar[l] &  & \circ\ar[dr] & x_{3}\ar[l]\ar[ur] &  & a_{3}\ar[ur]\ar[ll] &  & x_{2}\ar[ll]\ar[d] & \circ\ar[l]\\
\iddots & \circ\ar[u] &  &  &  & \circ\ar[ur] & \ddots &  & \iddots & \circ\ar[u] &  &  &  & \circ\ar[ur] & \ddots}
\]

\end{example}

\subsection{The diagrams for type $\mathbb{C}^{(1)}$}

Analogous to the case of type $\mathbb{B}^{(1)}$, 'most' diagrams
of mutation type $\mathbb{C}^{(1)}$ are obtained by 'gluing' diagrams
we already encountered, this time being of mutation type $\mathbb{C}$.
Again, there are many more ways to do this than we actually care for.
Hence we need to adjust our definitions to compensate for the combinatorial
likeness of the families of diagrams involved.

\begin{defn}
\label{def: C_B_B}A $C_{B,B}$ \emph{diagram} is a diagram obtained
by the following construction. Let $\Gamma,\Gamma'$ be two $B$ diagrams
and fix the notations of definition \ref{def:B} (where we add primes
to the labels used in \ref{def:B} for the vertices of $\Gamma'$).
Choose $y\in\Gamma_{0}\setminus\{x\}$ and $y'\in\Gamma_{0}'\setminus\{x'\}$
such that both vertices are of degree at most two. Identify $y$ and
$y'$.

For a $C_{B,B}$ diagram $\bar{\Gamma}$, let $\omega_{\bar{\Gamma}}$
be the shortest of all paths in the underlying undirected graph from
$x$ to $x'$. Then the width $w(\bar{\Gamma})$ of $\bar{\Gamma}$
is defined as the length of $\omega_{\bar{\Gamma}}$ minus two. 
\end{defn}
\begin{example}
The first example shows a $C_{B,B}$ diagram of width zero, the second
a $C_{B,B}$ diagram of width one.\[
\xyR{.05pc}\xyC{1pc}\xymatrix{i) & \circ\ar[dr] &  &  &  &  &  & \circ\ar[dl] & ii) &  & x'\ar[rr]^{2} &  & \circ\ar[dl]\\
 &  & \circ\ar[dl]\ar[rr] &  & \circ\ar[rr]\ar[ddl]^{2} &  & \circ\ar[ddl]^{2}\ar[dr] &  &  & \circ\ar[rr] &  & \circ\ar[ddl]^{2}\ar[dr]\\
 & \circ\ar[uu] &  &  &  &  &  & \circ\ar[uu] &  &  &  &  & \circ\ar[uu]\\
 &  &  & x\ar[uul]^{2} &  & x'\ar[uul]^{2} &  &  &  &  & x\ar[uul]^{2}}
\]

\end{example}
\begin{rem}
\label{rem:only class of C_B_B}There are usually many different pairs
of two $B$ diagrams which yield the same $C_{B,B}$ diagram. Cf.
\emph{ii)} in the example above, where 'splitting' at any vertex in
the right-hand triangle except at the lowest one yields such a pair.
We are not interested in the combinatorics of such 'splittings', but
only in the class of $C_{B,B}$ diagrams itself (cf. Remarks \ref{rem:only class of B_star_B}
and \ref{rem:only class of D_star_star'}). 
\end{rem}
\noindent Again, mutating these diagrams will yield other members
of the family just defined, at least for width greater than one (Fact
\ref{lem: C^(1) mutation steps} 1) a)). New diagrams show up if we
do certain mutations for the case where the width equals zero (Fact
\ref{lem: C^(1) mutation steps} 1) b)). These are:

\begin{defn}
\label{def: C_B_wedge_B} Let $C_{B\wedge B}$ be the infinite undirected
weighted graph obtained by the following construction. Let $(\nabla,\bullet)$
be a $\nabla$ graph, $x_{1},x_{2}$ two single vertices. Join each
of $x_{1},x_{2}$ with $\bullet$ by an edge of weight two, join both
of them by an edge of weight four and assign weight one to all other
edges.

\[
\xyR{.05pc}\xyC{.4pc}\xymatrix{ &  &  & \circ & \iddots\\
x_{2}\ar@{-}[drr]^{2} &  &  & \circ\ar@{-}'[u]'[r]'[] & \circ\\
 &  & \bullet\ar@{-}[dll]^{2}\ar@{-}'[ur]'[dr]'[]\\
x_{1}\ar@{-}[uu]^{4} &  &  & \circ\ar@{-}'[d]'[r]'[] & \circ\\
 &  &  & \circ & \ddots}
\]
 The subgraph of $C_{B\wedge B}$ induced on $\{x_{1},x_{2},\bullet\}$
is denoted $B\wedge B$. A \textbf{\emph{$C_{B\wedge B}$}} \emph{diagram}
is a cyclically oriented diagram with underlying undirected weighted
graph equal to a connected full subgraph of $B$ containing the vertices
of $B\wedge B$. 
\end{defn}

\subsection{The diagrams for type $\mathbb{D}^{(1)}$\label{sub:The--diagrams for D^(1)}}

We assume that the reader is familiar with the discussion of type
$\mathbb{B}^{(1)}$ in order not to be puzzled by the following

\begin{defn}
\label{def: D_star_star'}A $D_{\star,\star'}$ \emph{diagram}, where
$\star,\star'\in\{(\bigcirc,n)_{n\geq3},\square,\boxslash,\bot\}$,
is a diagram obtained by the following construction. Let $\Gamma$
be $D_{\star}$ diagram, $\Gamma'$ be a $D_{\star'}$ diagram and
fix the notations of definition \ref{def:D} (where we add primes
to the labels used in \ref{def:D} for the vertices of $\Gamma'$).
Choose a vertex $y$ of $\Gamma$ which is not labeled $a_{i}$ and
a vertex $y'$ of $\Gamma'$ which is not labeled $a'_{j}$ such that
the following conditions are fulfilled: 
\end{defn}
\begin{itemize}
\item if $y\in\star_{0}$ then all neighbours of $y$ belong to $\star_{0}$
and $y'\notin\star_{0}'$. 
\item if $y'\in\star'_{0}$ then all neighbours of $y'$ belong to $\star'_{0}$
and $y\notin\star_{0}$. 
\item any of $y,y'$ not contained in $\star_{0}$ respectively $\star_{0}'$
is of degree at most two. 
\end{itemize}
Finally, identify $y$ and $y'$.

Given a $D_{\star,\star'}$ diagram $\bar{\Gamma}$, let $\omega_{\bar{\Gamma}}$
be the shortest of all paths in the underlying undirected graph from
any of $x_{i}\in\Gamma_{0}$ to any of $x'_{j}\in\Gamma'_{0}$. Then
the width $w(\bar{\Gamma})$ of $\bar{\Gamma}$ is defined as follows:

\begin{itemize}
\item if $(\star,\star')=((\bigcirc,n),\bigcirc,m))$ then $w(\bar{\Gamma})$
is equal to the length of $\omega_{\bar{\Gamma}}$. 
\item otherwise $w(\bar{\Gamma})$ is equal to the length of $\omega_{\bar{\Gamma}}$
minus one. 
\end{itemize}
\begin{example}
We give three examples. The first is a $D_{(\bigcirc,4),\bot}$ diagram
of width two, the second is a $D_{\boxslash,\boxslash}$ diagram of
width zero, the last is a $D_{(\bigcirc,3),(\bigcirc,3)}$ diagram
of width zero. \[
\xyR{.7pc}\xyC{.7pc}\xymatrix{i) & a_{1}\ar[rr] &  & a_{2}\ar[dd] &  & \circ\ar[d]\ar[r] & \circ &  & ii) & \circ\ar[d] &  & a_{1}\ar[dl] &  &  & a'_{1}\ar[dl]\\
 &  &  &  & x_{2}\ar[ur]\ar[ul] & \circ\ar[l]\ar[r] & \circ\ar[d] &  &  & \circ\ar[r] & x_{2}\ar[ul]\ar[rr] &  & x_{1}\ar[ul]\ar[dl]\ar[r] & x'_{2}\ar[rr] &  & x_{1}\ar[dl]\ar[ul]\\
 & a_{4}\ar[uu]\ar[dr] &  & a_{3}\ar[ll]\ar[ur] & \circ\ar[d] & \circ\ar[ur] & x_{1}'\ar[l] & a'_{2}\ar[l] &  & \circ\ar[u] &  & a_{2}\ar[ul] &  &  & a'_{2}\ar[ul]\\
 &  & x_{3}\ar[ur] & \circ\ar[l]\ar[ur] & \circ\ar[l] &  & a'_{1}\ar[u]}
\]
 \[
\xyR{.6pc}\xyC{.7pc}\xymatrix{iii) &  & x_{3}\ar[dl] &  &  &  & x'_{3}\ar[dl]\\
 & a_{3}\ar[rr]\ar[dl] &  & a_{1}\ar[dl]\ar[ul] &  & a'_{3}\ar[rr]\ar[dl] &  & a'_{1}\ar[dl]\ar[ul]\\
x_{2}\ar[rr] &  & a_{2}\ar[ul]\ar[rr] &  & x_{1}\ar[rr]\ar[ul] &  & a'_{2}\ar[ul]\ar[rr] &  & x'_{1}\ar[ul]}
\]

\end{example}
\begin{rem}
\label{rem:only class of D_star_star'}Note that there are usually
many different pairs of $D_{\star}$ and $D_{\star'}$ diagrams, where
$\star,\star'\in\{(\bigcirc,n)_{n\geq3},\square,\boxslash,\bot\}$,
which yield the same $D_{\star,\star'}$ diagram (for certain choices
of vertices $y$ and $y'$ as described in the construction of definition
\ref{def: D_star_star'}). Cf. \emph{i)} in the example above, where
'splitting' at any vertex in the horizontal row of $x_{2}$ or in
$x'_{1}$ yields such a pair. Let us stress once more that we are
not interested in the combinatorics of such 'splittings', but only
in the classes of $D_{\star,\star'}$ diagrams, where $\star,\star'\in\{(\bigcirc,n)_{n\geq3},\square,\boxslash,\bot\}$,
themselves (cf. also Remarks \ref{rem:only class of B_star_B} and
\ref{rem:only class of C_B_B}). 
\end{rem}
As for types $\mathbb{B}^{(1)}$ and $\mathbb{C}^{(1)}$, mutating
these diagrams 'mostly' yields a diagram of this family again (Fact
\ref{lem: D^(1) mutation steps} 1) a)). We might consider these diagrams
as being in 'connected stage'. For width equal to zero, they may be
mutated into 'joined stage' (cf. \ref{lem: D^(1) mutation steps}
1) c)):

\begin{defn}
\label{def: D_star_vee_star} For $\star,\star'\in\{(\bigcirc,n)_{n\geq3},\square,\boxslash,\bot\}$,
$D_{\star\vee\star'}$ is the infinite graph obtained by the following
construction: 
\end{defn}
\begin{itemize}
\item if $(\star,\star')\neq\left((\bigcirc,n),(\bigcirc,m)\right)$ then
delete the $\nabla$ graphs attached to $x_{1},x'_{1}$ in $D_{\star}$
respectively $D_{\star'}$ without deleting $x_{1},x_{1}'$ themselves.
Identify $x_{1}$ and $x_{1}'$ and denote this vertex by $\bullet$. 
\item if $(\star,\star')=\left((\bigcirc,n),(\bigcirc,m)\right)$ then delete
the $\nabla$ graphs attached to $x_{1},x_{2}$ in $D_{(\bigcirc,n)}$
respectively $x'_{1},x'_{2}$ in $D_{(\bigcirc,m)}$ without deleting
these four vertices themselves. Identify the first and second vertex
of the following pairs: $(x_{1},a'_{1}),(x_{2},a'_{3}),$ $(a_{1},x'_{1}),(a_{3},x'_{2})$.
Finally identify $a_{2}$ and $a'_{2}$ and denote this vertex by
$\bullet$. 
\end{itemize}
The subgraph of $D_{\star\vee\star'}$ induced on the vertices of
$\star\cup\star'$ after identification is denoted by $\star\vee\star'$.
A $D_{\star\vee\star'}$ \emph{diagram} is a cyclically oriented simply-laced
diagram $\Gamma$ with underlying undirected graph equal to a connected
full subgraph of $D_{\star\vee\star'}$ containing the vertices of
$\star\vee\star'$ . In case of $(\star,\star')=((\bigcirc,n),(\bigcirc,m))$,
we additionally require that $|\Gamma_{0}|\geq6$.

\begin{example}
We give three examples, the first being a $D_{(\bigcirc,4)\vee(\bigcirc,3)}$,
the second a $D_{(\bigcirc,3)\vee\boxslash}$ and the last a $D_{\boxslash\vee\bot}$
diagram.

\[
\xyR{.6pc}\xyC{.7pc}\xymatrix{i) & a_{4}\ar[r]\ar[dl] & a_{1}\ar[d] &  & ii) &  &  & x_{3}\ar[dl] &  &  & \circ & iii) & \circ\ar[dl]\ar[r] & \circ\\
x_{4}\ar[r]\ar[dr] & a_{3}\ar[dr]\ar[u] & \bullet_{}\ar[l]\ar[r] & a'_{1}\ar[dl]\ar[ul] & \circ\ar[dr] &  & a_{3}\ar[rr]\ar[dl] &  & a_{1}\ar[dl]\ar[ul] & a'_{1}\ar[r] & x'_{2}\ar[dl]\ar[u] & \circ\ar[r] & x_{2}\ar[r]\ar[d]\ar[u] & a_{2}\ar[d] & a'_{2}\ar[dl]\\
\circ\ar[u] & \circ\ar[l] & a'_{3}\ar[u]\ar[r] & x'_{3}\ar[u] & \circ\ar[u] & x_{2}\ar[rr]\ar[l] &  & a_{2}\ar[ul]\ar[rr] &  & \bullet\ar[r]\ar[ul]\ar[u] & a'_{2}\ar[u] &  & a_{1}\ar[r] & \bullet\ar[ul] & a'_{1}\ar[l]}
\]

\end{example}
\noindent As already hinted at by the shape of $D_{(\bigcirc,n)\vee(\bigcirc,m)}$,
further mutation (cf. Fact \ref{lem: D^(1) mutation steps} 2)) might
yield diagrams which are in a sort of 'merged stage' :

\begin{defn}
1) \label{def: D_bigcirc_wedge_square}Let $n\in\mathbb{N}_{\geq3}$.
Then $D_{(\bigcirc,n)\wedge\square}$ is the infinite undirected graph
obtained by the following construction. Let $\{a_{1},..,a_{n-1}\}$
be an $A_{n-1}$ graph labeled linearly, $(\nabla_{i},x_{i})_{1\leq i\leq n-2}$
be $\nabla$ graphs, $\{\bullet_{j},a'_{j}\}_{1\leq j\leq2}$ be a
$A_{3}^{(1)}$ graph such that $a_{1}$ and $a_{2}$ are not connected.
For all $1\leq i\leq n-2$ join $x_{i}$ to both $a_{i}$ and $a_{i+1}$
by single edges and identify $\bullet_{1}$ with $a_{1}$ and $\bullet_{2}$
with $a_{n-1}$.

\[
\xyC{.1pc}\xyR{.5pc}\xymatrix{ &  &  &  & a'_{1}\ar@{-}'[dr]'[dd]'[dl]'[] &  &  &  &  &  &  &  &  &  & a'_{1}\ar@{-}'[dr]'[dd]'[dl]'[]\\
 & {} &  & \bullet_{2} &  & \bullet_{1} &  & \circ &  &  &  & {} &  & \bullet_{2}\ar@{-}[rr] &  & \bullet_{1} &  & \circ\\
\cdots &  & {}\ar@{--}'[ul]'[dl]'[]\ar@{--}'[ur]'[dr]'[] &  & a'_{2} &  & x_{1}\ar@{-}'[ur]'[dr]'[]\ar@{-}'[ul]'[dl]'[] &  & \cdots &  & \cdots &  & {}\ar@{--}'[ul]'[dl]'[]\ar@{--}'[ur]'[dr]'[] &  & a'_{2} &  & x_{1}\ar@{-}'[ur]'[dr]'[]\ar@{-}'[ul]'[dl]'[] &  & \cdots\\
 & {} &  & {} &  & a_{2} &  & \circ &  &  &  & {} &  & {} &  & a_{2} &  & \circ\\
 &  & {} & {}\ar@{--}'[l]'[d]'[]\ar@{--}'[u]'[r]'[] & a_{3} & x_{2}\ar@{-}'[r]'[d]'[]\ar@{-}'[u]'[l]'[] & \circ &  &  &  &  &  & {} & {}\ar@{--}'[l]'[d]'[]\ar@{--}'[u]'[r]'[] & a_{3} & x_{2}\ar@{-}'[r]'[d]'[]\ar@{-}'[u]'[l]'[] & \circ\\
 &  & \iddots & {} & D_{(\bigcirc,n)\wedge\square} & \circ & \ddots &  &  &  &  &  & \iddots & {} & D_{(\bigcirc,n)\wedge\boxslash} & \circ & \ddots}
\]
 By $(\bigcirc,n)\wedge\square$ we denote the subgraph of $D_{(\bigcirc,n)\wedge\square}$
induced on $\{a_{i}\}_{2\leq i\leq n-2}\cup\{\bullet_{j},a'_{j}\}_{1\leq j\leq2}$.
A \emph{$D_{(\bigcirc,n)\wedge\square}$ diagram} $\Gamma$ is a simply-laced
diagram with underlying undirected graph equal to a connected full
subgraph of $D_{(\bigcirc,n)\wedge\square}$ containing the vertices
of $(\bigcirc,n)\wedge\square$ such that the following condition
is satisfied: each full subgraph $\Gamma'$ of $\Gamma$ with underlying
undirected graph a cycle is cyclically oriented, except for the case
where $\Gamma'_{0}$ equals $\{a_{i}\}_{2\leq i\leq n-2}\cup\{\bullet_{1},\bullet_{2},a'_{2}\}$.\\

\noindent 2) \label{def: bigcirc_wedge_boxslash} For $n\in\mathbb{N}_{\geq3}$
let $D_{(\bigcirc,n)\wedge\boxslash}$ denote the infinite undirected
graph obtained by joining $\bullet_{1},\bullet_{2}$ in $D_{(\bigcirc,n+1),\square}$
by a single edge.

\noindent For any of $\star\in\{\boxslash,\overleftrightarrow{\boxslash}\}$,
the subgraph of $D_{(\bigcirc,n)\wedge\boxslash}$ induced on $\{a_{i}\}_{2\leq i\leq n-1}\cup\{\bullet_{j},a'_{j}\}_{1\leq j\leq2}$
is denoted by $(\bigcirc,n)\wedge\star$. A \emph{$D_{(\bigcirc,n)\wedge\star}$
diagram} is a simply-laced diagram $\Gamma$ with underlying undirected
graph equal to a connected full subgraph of $D_{(\bigcirc,n)\wedge\boxslash}$
containing the vertices of $(\bigcirc,n)\wedge\star$ such that the
following conditions are satisfied. 
\end{defn}
\begin{itemize}
\item If $\star=\boxslash$, then $\Gamma$ is cyclically oriented. 
\item If $\star=\overleftrightarrow{\boxslash}$, let $e$ denote the edge
between $\bullet_{1}$ and $\bullet_{2}$. Then the graph $(\Gamma_{0},\Gamma_{1}\setminus e)$
is cyclically oriented, while the subgraph induced on $\{\bullet_{1},\bullet_{2}\}\cup\{a_{i}\}_{2\leq i\leq n-1}$
is not. 
\end{itemize}
\begin{example}
The first example shows a $D_{(\bigcirc,3)\wedge\boxslash}$, the
second is a $D_{(\bigcirc,3)\wedge\overleftrightarrow{\boxslash}}$
diagram. We indicate possible enlargements by dots. \[
\xyR{.5pc}\xyC{.5pc}\xymatrix{i) &  &  & a'_{2}\ar[ddr] &  &  &  &  &  & ii) &  & a'_{2}\ar[ddl]\\
 &  &  & a'_{1}\ar[dr] &  &  &  &  &  &  &  & a'_{1}\ar[dl]\\
 &  & \bullet_{2}\ar[dr]\ar[ur]\ar[uur] &  & \bullet_{1}\ar[ll]\ar[dr] &  &  &  &  &  & \bullet_{2}\ar[dr]\ar[rr] &  & \bullet_{1}\ar[ul]\ar[uul]\ar[dr]\\
\circ\ar[dr] & x_{2}\ar[l]\ar[ur] &  & a_{2}\ar[ur]\ar[ll] &  & x_{1}\ar[ll]\ar[d] & \circ\ar[l] &  & \circ\ar[dr] & x_{2}\ar[l]\ar[ur] &  & a_{2}\ar[ur]\ar[ll] &  & x_{1}\ar[ll]\ar[d] & \circ\ar[l]\\
\iddots & \circ\ar[u] &  &  &  & \circ\ar[ur] & \ddots &  & \iddots & \circ\ar[u] &  &  &  & \circ\ar[ur] & \ddots}
\]

\end{example}
As we know from the discussion of type $\mathbb{D}$, an interesting
issue is the degeneration of the diagrams at hand if the length of
the circle goes to two. Indeed, we encounter new diagrams (cf. Fact
\ref{lem: D^(1) mutation steps} 3)):

\begin{defn}
\label{def: D_square_wegde_square} 1) \label{def: square_wedge_square}
$D_{\square\wedge\square}$ is the infinite graph obtained by the
following construction. Take the induced subgraph on $\square\subset D_{\square}$
and let $(\nabla,\bullet)$ be a $\nabla$ graph. Connect both $x_{1},x_{2}$
to $\bullet$.

By $\square\wedge\square$ we denote subgraph of $D_{\square\wedge\square}$
induced on the vertices $\{a_{i},x_{i}\}_{1\leq i\leq2}\cup\{\bullet\}$.
A $D_{\square\wedge\square}$ \emph{diagram} is a simply-laced diagram
$\Gamma$ with underlying undirected graph equal to a connected and
full subgraph of $D_{\square\wedge\square}$ containing the vertices
of $\square\wedge\square$ such that the following is fulfilled. 
\end{defn}
\begin{itemize}
\item For $1\leq i\leq2$, the subgraph induced on the set of vertices $\{x_{1},\bullet,x_{2},a_{i}\}$
is cyclically oriented. 
\item The subgraph induced on $\Gamma_{0}\setminus\{a_{i},x_{i}\}_{1\leq i\leq2}$
is cyclically oriented. \\

\end{itemize}
\noindent 2) \label{def: boxslash_wedge_boxslash} By $D_{\boxslash\wedge\boxslash}$
we denote the infinite graph obtained by inserting two edges between
$x_{1}$ and $x_{2}$ in $D_{\square\wedge\square}$.

The subgraph of $D_{\boxslash\wedge\boxslash}$ induced on the vertices
of $\{a_{i},x_{i}\}_{1\leq i\leq2}\cup\{\bullet\}$ is denoted by
$\boxslash\wedge\boxslash$. A $D_{\boxslash\wedge\boxslash}$ \emph{diagram}
is a cyclically oriented simply-laced diagram with underlying undirected
graph equal to a connected full subgraph of $D_{\boxslash\wedge\boxslash}$
containing the vertices of $\boxslash\wedge\boxslash$ .

\begin{example}
The first example depicts a $D_{\square\wedge\square}$, the second
a $D_{\boxslash\wedge\boxslash}$. We also include a $D_{(\bigcirc,3)\wedge\square}$
diagram to make the reader aware of the subtle difference in the definitions
involved. Again, possible enlargements are indicated by dots. \[
\xyC{1pc}\xyR{.3pc}\xymatrix{i) & a{}_{2}\ar[ddl] &  & ii) & a{}_{2}\ar[ddr] &  & iii) & a'_{2}\ar[ddl]\\
 & a{}_{1}\ar[dl] &  &  & a{}_{1}\ar[dr] &  &  & a'_{1}\ar[dr]\\
x_{1}\ar[dr] &  & x_{2}\ar[ul]\ar[uul] & x_{1}\ar[dr]\ar[ur]\ar[uur] &  & x_{2}\ar@<.5ex>[ll]\ar@<-.5ex>[ll] & \bullet_{1}\ar[dr]\ar[ur] &  & \bullet_{2}\ar[ll]\ar[uul]\\
 & \bullet\ar[dr]\ar[ur] &  &  & \bullet\ar[dr]\ar[ur] &  &  & x_{1}\ar[dr]\ar[ur]\\
\circ\ar[ur] &  & \circ\ar[ll] & \circ\ar[ur] &  & \circ\ar[ll] & \circ\ar[ur] &  & \circ\ar[ll]\\
 & \vdots &  &  & \vdots &  &  & \vdots}
\]
Beyond this 'degenerated stage', there is one further stage which
yields new diagrams:
\end{example}
\begin{defn}
\label{def: D_boxtimes} $D_{\boxtimes}$ is the infinite graph obtained
by the following construction. Let $\{a_{i}\}{}_{1\leq i\leq4}$ be
an $A_{4}^{(1)}$ graph , $(\nabla,x_{1})$ a $\nabla$ graph. Join
each of $\{a_{i}\}_{1\leq i\leq4}$ with $x_{1}$ via one edge.$$ \begin{xy} 0;<.4pt,0pt>:<0pt,-.4pt>::  (75,75) *+{x_1} ="0", (25,125) *+{a_1} ="1", (25,25) *+{a_2} ="2", (125,25) *+{a_3} ="3", (125,125) *+{a_4} ="4", (175,50) *+{\circ} ="5", (175,100) *+{\circ} ="6", (175,0) *+{\circ} ="7", (225,50) *+{\circ} ="8", (225,100) *+{\circ} ="9", (175,150) *+{\circ} ="10", (225,0) *+{\iddots} ="11", (225,150) *+{\ddots} ="12", "1", {\ar@{-}"0"}, "0", {\ar@{-}"2"}, "3", {\ar@{-}"0"}, "0", {\ar@{-}"4"}, "0", {\ar@{-}"5"}, "6", {\ar@{-}"0"}, "2", {\ar@{-}"1"}, "4", {\ar@{-}"1"}, "2", {\ar@{-}"3"}, "4", {\ar@{-}"3"}, "5", {\ar@{-}"6"}, "5", {\ar@{-}"7"}, "8", {\ar@{-}"5"}, "6", {\ar@{-}"9"}, "10", {\ar@{-}"6"}, "7", {\ar@{-}"8"}, "9", {\ar@{-}"10"}, \end{xy} $$The
subgraph of $D_{\boxtimes}$ induced on the vertices of $\{a_{i}\}_{1\leq i\leq4}\cup\{x_{1}\}$
is denoted by $\boxtimes$. A $D_{\boxtimes}$ \emph{diagram} is a
cyclically oriented simply-laced diagram with underlying undirected
graph equal to a connected full subgraph of $D_{\boxtimes}$ containing
the vertices of $\boxtimes$. 
\end{defn}
\begin{rem}
Note that the families of $D_{\boxtimes}$,$D_{\star,\star'},D_{\star\vee\star'}$
and $D_{\star\wedge\star'}$ diagrams with $\star,\star'\in\bigl\{(\bigcirc,n)_{n\geq3},\square,\boxslash,$\linebreak$\bot,\overleftrightarrow{\boxslash}\bigr\}$
are mutually disjoint. Indeed, the crucial point is that any $D_{(\bigcirc,3)\vee(\bigcirc,3)}$
diagram $\Gamma$ (see \emph{i)} below for the general case) may be
distinguished from a $D_{\boxtimes}$ diagram. This is because the
first mentioned always contains an induced underlying subgraph of
the form \emph{ii)} since $|\Gamma_{0}|\geq6$. But a $D_{\boxtimes}$
diagram never does.

\[
\xyR{.05pc}\xyC{.4pc}\xymatrix{i) & \ddots & \circ\ar@{-}'[d]'[dl]'[] &  &  &  &  &  & \circ & \iddots &  & ii)\\
 & \circ & \circ &  & a_{1}\ar@{-}'[dl]'[dd]'[] &  & a'_{1} &  & \circ\ar@{-}'[u]'[r]'[] & \circ &  &  & \circ\ar@{-}'[dr]'[rr]'[]'[dd] &  & \circ\\
 &  &  & x_{3}\ar@{-}'[ul]'[dl]'[] &  & \bullet\ar@{-}'[ul]'[ur]'[] &  & x'_{3}\ar@{-}'[ur]'[dr]'[] &  &  &  &  &  & \circ &  & \circ\\
 & \circ & \circ\ar@{-}'[d]'[l]'[] &  & a_{3}\ar@{-}'[ur]'[rr]'[] &  & a'_{3}\ar@{-}'[ur]'[uu]'[] &  & \circ\ar@{-}'[d]'[r]'[] & \circ &  &  & \circ\ar@{-}'[ur]'[rr]'[] &  & \circ\ar@{-}'[uu]'[ur]'[]\\
 & \iddots & \circ &  &  &  &  &  & \circ & \ddots}
\]

\end{rem}

\subsection{Summary}

\label{subsection: summary}

As already mentioned at the beginning of section \ref{sec:The--diagrams},
the notation in Theorem \ref{Theorem (mutation classes)} is chosen
in a way to allow for a concise formulation of the statement. However,
it is imprecise in so far as some of the symbols (e.g. $B_{\bot\wedge\bot}$
or $D_{\bot\wedge\bot}$) carry no meaning at all. To make the statement
rigorous, let it be understood that all families of diagrams whose
existence is suggested by Theorem \ref{Theorem (mutation classes)}
and which have not been defined up until now are empty. In order to
clarify the situation, we collect all families of diagrams which are
nonempty and give precise references to their definitions below.

Moreover, we list the 'characteristic' subgraphs of the infinite graphs
that occurred in the previous definitions. As we will see in the next
section, these subgraphs are essential to the behaviour under mutation
of the diagrams involved. Thus it is very important to stick precisely
to the notations we introduced, once more depicted below. Finally,
note that $(\bigcirc,n)\wedge\star$ and $(\bigcirc,n)\wedge\overleftrightarrow{\star}$
coincide for both $\star\in\{B,\boxslash\}$ since these are really
just the subgraphs of the corresponding (undirected!) graphs. However,
the families of diagrams associated to them are different, because
diagrams are oriented.\\

\begin{itemize}
\item type $\mathbb{A}$: $A$ diagrams; Def. \ref{def: A}$$ \begin{xy} 0;<.4pt,0pt>:<0pt,-.3pt>:: (150,50) *+{\circ} ="0", (100,75) *+{\circ} ="1", (150,100) *+{\circ} ="2", (50,50) *+{\circ} ="3", (50,100) *+{\circ} ="4", (50,0) *+{\circ} ="5", (0,50) *+{\circ} ="6", (150,0) *+{\circ} ="7", (200,50) *+{\circ} ="8", (200,100) *+{\circ} ="9", (150,150) *+{\circ} ="10", (0,100) *+{\circ} ="11", (50,150) *+{\circ} ="12", (0,150) *+{\iddots} ="13", (0,0) *+{\ddots} ="14", (200,0) *+{\iddots} ="15", (200,150) *+{\ddots} ="16", "1", {\ar@{-}"0"}, "0", {\ar@{-}"2"}, "0", {\ar@{-}"7"}, "8", {\ar@{-}"0"}, "2", {\ar@{-}"1"}, "3", {\ar@{-}"1"}, "1", {\ar@{-}"4"}, "2", {\ar@{-}"9"}, "10", {\ar@{-}"2"}, "4", {\ar@{-}"3"}, "5", {\ar@{-}"3"}, "3", {\ar@{-}"6"}, "11", {\ar@{-}"4"}, "4", {\ar@{-}"12"}, "6", {\ar@{-}"5"}, "7", {\ar@{-}"8"}, "9", {\ar@{-}"10"}, "12", {\ar@{-}"11"}, \end{xy} $$\\

\item type $\mathbb{B}$: $B$ diagrams; Def. \ref{def:B} \[
\xyR{.01pc}\xyC{.4pc}\xymatrix{\ddots & \circ &  &  &  & \circ & \iddots\\
\circ & \circ\ar@{-}'[l]'[u]'[]'[dr]'[dd] &  &  &  & \circ\ar@{-}'[u]'[r]'[]'[dl]'[dd] & \circ\\
 &  & \circ\ar@{-}[rr] &  & \circ\ar@{-}[ddl]^{2}\\
\circ & \circ\ar@{-}'[l]'[d]'[]'[uu] &  &  &  & \circ\ar@{-}'[r]'[d]'[]'[uu] & \circ\\
\iddots & \circ &  & x\ar@{-}[uul]^{2} &  & \circ & \ddots}
\]

\item type $\mathbb{D}$: $D_{\star}$ diagrams, $\star\in\bigl\{(\bigcirc,n)_{n\geq3},\square,\boxslash,\bot\bigr\}$;
Def. \ref{def:D} \[
\xyR{.55pc}\xyC{.48pc}\xymatrix{(\bigcirc,n) &  & x_{n} &  &  &  &  & \square &  &  &  & \boxslash &  &  &  & \bot\\
 & a_{n}\ar@{--}'[dd]'[dl]'[] &  & a_{1}\ar@{-}'[ll]'[ul]'[] &  &  &  & x_{1}\ar@{-}'[dl]'[dd]'[dr]'[] &  &  &  & x_{1}\ar@{-}'[dl]'[dd]'[dr]'[]'[dd]\\
x_{i} &  &  &  & x_{1} &  & a_{1} &  & a_{2} &  & a_{1} &  & a_{2} &  &  & x_{1}\\
 & a_{i}\ar@{--}'[rr]'[dr]'[] &  & a_{2}\ar@{-}'[uu]'[ur]'[] &  &  &  & x_{2} &  &  &  & x_{2} &  &  & a_{1}\ar@{-}'[ur]'[rr] &  & a_{2}\\
 &  & x_{i-1}}
\]
 \\

\item type $\mathbb{B}^{(1)}$:

\begin{itemize}
\item $B_{\star,B}$ diagrams, $\star\in\{(\bigcirc,n)_{n\geq3},\square,\boxslash,\bot\}$;
Def. \ref{def: B_star_B} 
\item $B_{\star\wedge B}$ diagrams, $\star\in\{\square,\boxslash\}$; Def.
\ref{def: B_star_wedge_B}, 1) 
\item $B_{\boxslash\wedge\square}$ diagrams; Def. \ref{def: B_star_wedge_B},
2) 
\item $B_{(\bigcirc,n)\wedge\star}$ diagrams, $\star\in\{B,\overleftrightarrow{B}\}$;
Def. \ref{def: B_star_wedge_B}, 3) \[
\xyR{.7pc}\xyC{.2pc}\xymatrix{(\bigcirc,n)\wedge B &  & x_{_{i-1}} &  &  &  & \square\wedge B &  &  &  & \boxslash\wedge B &  &  &  & \boxslash\wedge\square\\
 & a_{2}\ar@{--}'[ur]'[rr]'[] &  & a_{i}\ar@{--}'[dr]'[dd]'[] &  &  & x_{1}\ar@{-}[ddr]^{2}\ar@{-}[ddl]_{2}^{} &  &  &  & x_{1}\ar@{-}[ddr]^{2}\ar@{-}[ddl]_{2}^{} &  &  &  & x_{1}\ar@{-}[ddr]^{2}\ar@{-}[ddl]_{2}^{}\\
x_{1}\ar@{-}[ur]^{2} &  &  &  & x_{i}\\
 & a_{1}\ar@{-}[ul]^{2}\ar@{-}[uu] &  & a_{n}\ar@{-}'[dl]'[ll]'[] &  & a_{1}\ar@{-}'[dr]'[rr]^{} &  & a_{2} &  & a_{1}\ar@{-}'[dr]'[rr]^{} &  & a_{2} &  & a_{1}\ar@{-}'[dr]'[rr]^{} &  & a_{2}\ar@{=}[ll]\\
(\bigcirc,n)\wedge\overleftrightarrow{B} &  & x_{n} &  &  &  & x_{2} &  &  &  & x_{2}\ar@{-}[uuu]^{2} &  &  &  & x_{2}}
\]
 \\

\end{itemize}
\item type $\mathbb{C}^{(1)}$

\begin{itemize}
\item $C_{B,B}$ diagrams; Def. \ref{def: C_B_B} 
\item $C_{B\wedge B}$ diagrams; Def. \ref{def: C_B_wedge_B} \[
\xyR{.05pc}\xyC{.4pc}\xymatrix{B\wedge B & x_{1}\ar@{-}[rr]^{4} &  & x_{2}\ar@{-}[dl]^{2}\\
 &  & \bullet\ar@{-}[ul]^{2}}
\]
 \\

\end{itemize}
\item type $\mathbb{D}^{(1)}$

\begin{itemize}
\item $D_{\star,\star'}$ diagrams, $\star,\star'\in\{(\bigcirc,n)_{n\geq3},\square,\boxslash,\bot\}$;
Def. \ref{def: D_star_star'} 
\item $D_{\star\vee\star'}$ diagrams, $\star,\star'\in\{(\bigcirc,n)_{n\geq3},\square,\boxslash,\bot\}$;
Def. \ref{def: D_star_vee_star} 
\item $D_{(\bigcirc,n)\wedge\star}$ diagrams, $\star\in\{\square,\boxslash,\overleftrightarrow{\boxslash}\}$
and $n\in\mathbb{N}_{\geq3}$; Def. \ref{def: D_bigcirc_wedge_square} 
\item $D_{\star\wedge\star}$ diagrams, $\star\in\{\square,\boxslash\}$;
Def. \ref{def: D_square_wegde_square} 
\item $D_{\boxtimes}$ diagrams; Def. \ref{def: D_boxtimes} \[
\xyC{.2pc}\xyR{1pc}\xymatrix{ &  & (\bigcirc,n)\wedge\square &  &  &  &  & (\bigcirc,n)\wedge\boxslash &  & (\bigcirc,n)\wedge\overleftrightarrow{\boxslash}\\
 &  & a'_{1}\ar@{-}'[dr]'[dd]'[dl]'[] &  &  &  &  &  & a'_{1}\ar@{-}'[dr]'[dd]'[dl]'[]\\
 & \bullet_{2} &  & \bullet_{1}\ar@{-}'[dr]'[dd]'[] &  &  &  & \bullet_{2} &  & \bullet_{1}\ar@{-}'[dr]'[dd]'[]'[ll]\\
x_{n-2}\ar@{-}'[ur]'[dr]'[] &  & a'_{2} &  & x_{1} &  & x_{n-2}\ar@{-}'[ur]'[dr]'[] &  & a'_{2} &  & x_{1}\\
 & a_{n-2} &  & a_{2} &  &  &  & a_{n-2} &  & a_{2}\\
{} & x_{i+1}\ar@{--}'[u]'[r]'[] & a_{i} & x_{i}\ar@{--}'[u]'[l]'[] &  &  &  & x_{i+1}\ar@{--}'[u]'[r]'[] & a_{i} & x_{i}\ar@{--}'[u]'[l]'[]}
\]
 \\
 \[
\xymatrix{ & \square\wedge\square &  &  & \boxslash\wedge\boxslash &  &  &  & \boxtimes\\
 & a{}_{2}\ar@{-}[ddl]\ar@{-}[ddr] &  &  & a{}_{2}\ar@{-}[ddl]\ar@{-}[ddr] &  &  & a_{2}\ar@{-}'[rr]'[dr]'[] &  & a_{3}\ar@{-}'[dd]'[dl]'[]\\
 & a{}_{1} &  &  & a{}_{1} &  &  &  & x_{1}\\
x_{1} &  & x_{2} & x_{1} &  & x_{2}\ar@{=} &  & a_{1}\ar@{-}'[uu]'[ur]'[] &  & a_{4}\ar@{-}'[ll]'[ul]'[]\\
 & \bullet\ar@{-}'[ur]'[uu]'[ul]'[] &  &  & \bullet\ar@{-}'[ur]'[uu]'[ul]'[]}
\]

\end{itemize}
\end{itemize}

\section{Proof\label{sec:Proof}}

\noindent The idea for the proof is the same for all types. In the
first step we prove that the set of families of graphs defined for
a particular type is invariant under mutation. The second step then
applies these results to show that any diagram belonging to one of
the families defined for a particular type indeed is an element of
that type's mutation class.

Mutation is a local concept, hence the first step mentioned above
involves a detailed examination of the effect of diagram mutation
at a given vertex (depending on the local situation). The results
are presented in Facts $4.1.-4.7.$ We shall sketch the proofs for
Facts 4.1 and 4.2. only, leaving the rest to the reader. To get a
good feeling for the statements, we recommend the applet by Keller
\cite{Keller}.

\subsection{Type $\mathbb{A}$}

\begin{fact}
\label{Lemma: A mutation steps}\emph{Let $\Gamma$ be an $A$ diagram,
$k$ any vertex of $\Gamma$. Then $\mu_{k}(\Gamma)$ is an $A$ diagram.} 
\end{fact}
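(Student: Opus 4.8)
The plan is to argue locally, using the fact that mutation only affects the vertex $k$, its neighbours, and the edges among them. First I would observe that the property of being an $A$ diagram has three ingredients: (i) the underlying undirected graph is a connected full subgraph of $A$, (ii) the diagram is simply-laced, and (iii) it is cyclically oriented. Ingredient (ii) is immediate, since the mutation rule $\pm\sqrt{c}\pm\sqrt{c'}=\sqrt{ab}$ with $a=b=1$ forces $c'\in\{0,1\}$, so no weight exceeding one is ever created in a simply-laced diagram with all vertex-degrees small; I would note that in $A$ every vertex has degree at most three and any two-path through $k$ closes up inside $A$, so weight-$2$ edges cannot appear. Ingredient (iii) is preserved in general for the mutation of cyclically oriented diagrams whose new edges do not create larger cycles — but here the real content is ingredient (i), that the underlying graph stays a subgraph of $A$.

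The key step is therefore a case analysis on the local shape of $\Gamma$ around $k$ inside the graph $A$ of Definition~\ref{def: A}. Recall that $A$ is built from two $\nabla$ graphs glued at a common vertex, so every vertex of $A$ lies in some $\nabla$ and has degree at most three, and the only cycles in $A$ are the triangles formed by a vertex and its two children. I would enumerate the possibilities for $k$: (a) $k$ has degree $\le 1$, where $\mu_k$ changes nothing in the underlying graph; (b) $k$ has two neighbours $i,j$; here either $i,j$ are already adjacent (so $i,j,k$ is one of the triangles of $A$), in which case $\mu_k$ deletes the edge $ij$ — still a subgraph of $A$ — or $i,j$ are not adjacent, in which case $\mu_k$ inserts an edge $ij$ and one must check this new edge together with the rest still embeds in $A$; (c) $k$ has three neighbours, which in $A$ means $k$ is the apex of a triangle $\{k,p,q\}$ (its two children) plus one parent $r$, and here $\mu_k$ deletes $pq$ and may insert $rp$ and $rq$. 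In each subcase I would exhibit the resulting underlying graph explicitly as a full connected subgraph of $A$ — typically by re-reading the picture of $A$ with $k$ relocated, using that cyclic orientability of the original diagram constrains which of the "edge-creating" two-paths are actually oriented paths, so that only the combinatorially allowed new edges are produced.

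The main obstacle I expect is bookkeeping in case (c) and in the subcase of (b) where a new edge is created: one has to verify that after the mutation no "forbidden" configuration (a $4$-cycle, a vertex of degree $\ge 4$, or a subgraph not occurring in $A$) arises, and this depends delicately on the hypothesis that $\Gamma$ was cyclically oriented — a non-cyclically-oriented $3$-cycle at $k$ would, after mutation, produce a different local graph. So the crux is to show that the interaction of the cyclic-orientation hypothesis with the tree-plus-triangles structure of $A$ leaves exactly the mutations that land back in $A$, and to check that cyclic orientability itself is inherited (which follows since any new cycle in $\mu_k(\Gamma)$ either came from a cycle of $\Gamma$ through $k$ of length $\le 3$, handled directly, or is a triangle on the new edges whose orientation is dictated by the sign rule). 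Once all local cases are checked, connectedness is clear because $\mu_k$ never disconnects a connected diagram, and the proof is complete.
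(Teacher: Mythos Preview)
Your overall strategy---reduce to a local case analysis at the vertex $k$ and verify that the mutated neighbourhood is still realizable as a full subgraph of $A$---is the same as the paper's. However, your execution has a genuine gap: you assert that ``every vertex of $A$ lies in some $\nabla$ and has degree at most three''. This is false. In the infinite graph $A$ (Definition~\ref{def: A}) every vertex has degree exactly four: a non-root vertex of a $\nabla$ graph has one parent, one sibling (to which it is joined by the extra edge), and two children, while the glued root has two children in each of the two $\nabla$'s. Consequently each vertex of $A$ lies in precisely two triangles, one ``above'' (vertex, parent, sibling) and one ``below'' (vertex and its two children).

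Because of this, your case list stops one step too early. The generic local picture---and the one the paper displays as encoding ``all cases to be considered''---is a vertex $k$ of degree four whose four neighbours $1,2,3,4$ split into two cyclically oriented triangles $\{1,k,4\}$ and $\{2,k,3\}$. Mutation at $k$ destroys the edges $1\!-\!4$ and $2\!-\!3$ and creates the edges $1\!-\!2$ and $3\!-\!4$, yielding the two new cyclically oriented triangles $\{1,k,2\}$ and $\{3,k,4\}$; this is again the generic $A$-neighbourhood. Your degree-$\le 3$ cases are then the degenerations obtained by deleting some of $1,2,3,4$ from this picture. Without the degree-four case you cannot handle, for instance, mutation at a vertex with both children and both parent and sibling present in $\Gamma$.

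A smaller point: your justification of simply-lacedness (``all vertex-degrees small'' and ``any two-path through $k$ closes up inside $A$'') is imprecise. Not every two-path through $k$ closes to a triangle in $A$ (e.g.\ parent--$k$--child does not), so new edges genuinely appear; what keeps weights equal to one is that through any ordered pair of neighbours there is at most one oriented two-path via $k$, which follows from the structure of $A$ together with cyclic orientation. Once you add the degree-four case and tighten this argument, the proof goes through along the lines you sketch.
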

\begin{proof}
All cases to be considered are encoded by\emph{\[
\xyC{2pc}\xyR{.1pc}\xymatrix{2\ar[rr] &  & 3\ar[dl] &  &  & 2\ar[dr] &  & 3\ar[dd]\\
 & k\ar[ul]\ar[dr] &  & \ar@{<->}[r]^{\mu_{k}} &  &  & k\ar[dl]\ar[ur]\\
1\ar[ur] &  & 4\ar[ll] &  &  & 1\ar[uu] &  & 4\ar[ul]}
\]
}Indeed, if $\mbox{deg}(k)=4$, then the only case to be considered
is the one depicted above. If $\mbox{deg}(k)=3$, say $\mbox{deg}^{+}(k)=1$,
then the only possiblity for the local situation at $k$ in an $A$
diagram is of the form shown on the left hand side below\emph{\[
\xyC{2pc}\xyR{.1pc}\xymatrix{2\ar[rr] &  & 3\ar[dl] &  &  & 2\ar[dr] &  & 3\\
 & k\ar[ul] &  & \ar@{<->}[r]^{\mu_{k}} &  &  & k\ar[dl]\ar[ur]\\
1\ar[ur] &  &  &  &  & 1\ar[uu]}
\]
}Note that performing mutation on the induced subdiagram has the same
effect as taking the induced subdiagram after applying mutation to
the general situation. That is, mutation commutes with taking induced
subdiagrams (this in fact follows directly from the definition of
mutation). Thus, correctness of the general case implies correctness
of all other cases indeed. 
\end{proof}
\begin{thm*}
\textbf{\emph{\ref{Theorem (mutation classes)}}} \emph{(1)} Let $\Gamma$
be a connected diagram. Then $\Gamma$ is of mutation type $\mathbb{A}$
if and only if it is an $A$ diagram. 
\end{thm*}
\begin{proof}
Since any diagram with underlying graph of Dynkin type $\mathbb{A}$
is itself an $A$ diagram, one direction follows from Fact \ref{Lemma: A mutation steps}.
For the other direction, we proceed by induction on $|\Gamma_{0}|$.

For $|\Gamma_{0}|=1$ there is nothing to show. So assume $\Gamma$
has $n+1$ vertices. If no vertex belongs to a cycle we are done.
Else choose a vertex $y$ such that the induced subgraph $\Gamma'$
on $\Gamma_{0}\setminus\{y\}$ is connected and apply the inductive
assumption to $\Gamma^{\prime}$. Denote by $\mu$ a series of mutations
transforming $\Gamma'$ into a diagram with underlying undirected
graph $A_{n}$. Then $\mu(\Gamma)$ has underlying undirected graph
$A_{n}$ with $y$ joined to some of its vertices, but it is also
an $A$ diagram by Fact \ref{Lemma: A mutation steps}. If $y$ is
connected to only one vertex, then the underlying graph of $\mu(\Gamma)$
must already be $A_{n+1}$. Assuming that $y$ is connected to more
than two vertices of $A_{n}$, we label the vertices of $A_{n}$ linearly
and choose the smallest three which are joined to $y$, say $i<j<k$\[
\xyR{1pc}\xyC{1pc}\xymatrix{ &  & y\ar@{-}[dl]\ar@{-}[d]\ar@{-}[dr]\\
1\ar@{--}'[r]'[rr]'[rrr]'[rrrr] & i & j & k & n}
\xyR{2pc}\]
 Since $A$ contains no cycles of length $m\geq3$, it follows that
$j=i+1$ and $k=i+2$. But then $\mu(\Gamma)$ is obviously no $A$
diagram. Therefore $y$ is connected to at most two vertices of $\mu(\Gamma)$,
which have to be neighbours as we just proved. Thus mutation in $y$
yields a diagram with underlying undirected graph $A_{n+1}$. 
\end{proof}

\subsection{Type $\mathbb{B}$}

\begin{fact}
\label{Lemma: B mutations steps}\emph{Let $\Gamma$ be an $B$ diagram,
$k$ any vertex of $\Gamma$. Then $\mu_{k}(\Gamma)$ is an $B$ diagram.} 
\end{fact}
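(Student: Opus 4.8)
The plan is to mimic the argument sketched for Fact \ref{Lemma: A mutation steps}, but now keeping careful track of the distinguished vertex $x$ and the weight-two edges emanating from it. Recall that a $B$ diagram is a cyclically oriented diagram whose underlying weighted graph is a connected full sub-graph of $B$ containing $x$, so the only edges of weight $\neq 1$ are the (at most two) edges incident to $x$, and these have weight $2$. Fix a vertex $k$ of $\Gamma$. Since mutation is a local operation, it suffices to understand the induced sub-graph on $k$ together with its neighbours and their mutual edges, and to check case by case that $\mu_k(\Gamma)$ is again a full sub-graph of $B$ containing $x$, that it stays simply-laced away from $x$ with weight-two edges only at $x$, and that it remains cyclically oriented.

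First I would dispose of the case $k=x$. Here $x$ has degree at most two in $\Gamma$ (its neighbours sit on the ``horizontal line'' through $x$ in the picture of $B$), and mutating at $x$ only reverses the two weight-two edges and possibly creates or destroys one edge between the two neighbours of $x$; one checks directly against the local structure of $B$ near $x$ that the result is again a $B$ diagram, the parity of the weights along any short cycle being preserved by the mutation rule $\pm\sqrt c\pm\sqrt{c'}=\sqrt{ab}$ with $a=b=2$ giving $c,c'\in\{0,4\}$ — but a weight-four edge cannot appear since the two neighbours of $x$ are themselves joined in $B$ only by a weight-one edge, so in fact $c+c'$ stays in $\{0,1\}$ once one inspects the actual adjacencies. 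Wait — more carefully: the only way a two-edge path through $x$ arises is between the two neighbours of $x$, and in $B$ those are either non-adjacent or joined by a weight-one edge, and the mutation rule forces the new edge (if any) to again be weight one while the cycle condition is maintained; this is the one spot needing genuine verification and I would draw the two relevant local pictures.

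Next, for $k\neq x$: if $k$ is not adjacent to $x$ and none of its neighbours is $x$, then the situation is entirely simply-laced and local around $k$, and it is literally the type-$\mathbb A$ analysis of Fact \ref{Lemma: A mutation steps} applied inside the $\nabla$-part of $B$ — the $\nabla$ graph is built from binary trees with the two children of each node joined, so the only cycles are triangles, and one verifies $\mu_k$ sends full sub-graphs of a $\nabla$ graph to full sub-graphs of a $\nabla$ graph preserving cyclic orientation, exactly as for $A$. The remaining case is $k$ adjacent to $x$ (so $k$ lies on the horizontal line through $x$): then one of the weight-two edges is incident to $k$, and a two-edge path through $k$ can join $x$ to another neighbour $z$ of $k$. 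The mutation rule with $a=2$, $b=1$ (the edge $k$–$z$ has weight one) gives $\pm\sqrt c\pm\sqrt{c'}=\sqrt2$, hence $\{c,c'\}=\{0,2\}$: a weight-two edge between $x$ and $z$ is created or destroyed, which is precisely consistent with the geometry of $B$ near $x$ (the neighbours of a neighbour of $x$ are again candidates to be joined to $x$ by weight-two edges). Again I would exhibit the handful of local pictures — $k$ of degree up to the maximum allowed, with $x$ on one side — and read off that $\mu_k(\Gamma)$ is a full connected sub-graph of $B$ still containing $x$, still cyclically oriented (the new triangles through $x$ inherit an orientation forced by involutivity of $\mu_k$ and the cyclic-orientability of $\Gamma$), and still carrying weight two exactly on the edges at $x$.

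The main obstacle is bookkeeping rather than conceptual depth: one must enumerate the local configurations at $k$ that actually occur as induced sub-graphs of $B$ (this is a finite and short list, because degrees in $B$ are bounded — interior $\nabla$-vertices have degree three, and $x$ and its line-neighbours have controlled degree), and for each verify the three invariants (full sub-graph of $B$ containing $x$; weights; cyclic orientation). The genuinely non-routine point, worth spelling out, is that the weight-two structure is preserved: this hinges on the observation that the only weighted two-edge paths are those through $x$ or through a neighbour of $x$, that in both cases the product $ab$ of the two weights is $2$, and that the resulting Markov-type equation then forbids any weight other than $0$ or $2$ on the affected edge — so no weight-four or other ``bad'' edges can be produced, and the output remains a diagram of the required shape. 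As with Fact \ref{Lemma: A mutation steps}, I would present the proof as one or two encoding diagrams with a sentence of commentary, leaving the symmetric cases to the reader.
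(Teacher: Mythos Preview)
Your plan is the same as the paper's: reduce to the type-$\mathbb A$ analysis away from $x$, and supply the extra local pictures involving the weight-two edges at $x$. The paper's proof literally consists of two encoding diagrams beyond Fact~\ref{Lemma: A mutation steps} --- one for the triangle at $x$ and one for mutation at a degree-four neighbour of $x$ --- so your case split ($k=x$; $k$ far from $x$; $k$ adjacent to $x$) matches exactly.

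Two small corrections to tighten before you write it up. First, interior $\nabla$-vertices have degree four, not three (parent, sibling, two children); this does not affect your finiteness argument but should be stated correctly. Second, your treatment of $k=x$ wobbles: in $B$ the two children of $x$ are \emph{always} joined by a weight-one edge (that is the sibling edge in the $\nabla$ construction), and since $\Gamma$ is a full sub-graph, if both are present then so is that edge. Hence the only case is $c=1$, and the rule $+\sqrt{1}\pm\sqrt{c'}=\sqrt{2\cdot 2}=2$ forces $c'=1$ with the triangle remaining cyclically oriented --- there is no scenario producing $c'\in\{0,4\}$ here, so you can drop that digression entirely.
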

\begin{proof}
Using the same arguments as in the proof of Fact \ref{Lemma: A mutation steps},
all cases to be considered beyond those already settled by Fact \ref{Lemma: A mutation steps}
are encoded by\emph{\[
\xyC{1.5pc}\xyR{.1pc}\xymatrix{x\ar[ddr]_{2} &  & x\ar[dd]_{}^{2} &  & x\ar[rr]^{2} &  & 3\ar[dl] & x\ar[dr]^{2} &  & 3\ar[dd]\\
 & \ar@{<->}[r]^{\mu_{x}} &  &  &  & k\ar[ul]^{2}\ar[dr] & \ar@{<->}[r]^{\mu_{k}} &  & k\ar[dl]\ar[ur]\\
\circ\ar[uu]^{2} & \circ\ar[l] & \circ\ar[r] & \circ\ar[uul]_{2} & 1\ar[ur] &  & 2\ar[ll] & 1\ar[uu]_{2}^{} &  & 2\ar[ul]}
\]
}  
\end{proof}
\begin{thm*}
\textbf{\emph{\ref{Theorem (mutation classes)}}} \emph{(2)} Let $\Gamma$
be a connected diagram. Then $\Gamma$ is of mutation type $\mathbb{B}$
if and only if it is a $B$ diagram. 
\end{thm*}
\begin{proof}
Use the same arguments as in the proof for type $\mathbb{A}$ with
the appropriate replacements. For the inductive step we additionally
assume that $y\neq x$, which is always possible. 
\end{proof}

\subsection{Type $\mathbb{D}$}

The following result is an easy exercise in diagram mutation (cf.
Definition \ref{def:D} for notations). For the first case, simply
use Fact \ref{Lemma: A mutation steps}.

\begin{fact}
\emph{\label{Lemma: D mutation steps} Let $\Gamma$ be a $D_{\star}$
diagram for $\star\in\bigl\{(\bigcirc,n)_{n\geq3},\square,\boxslash,\bot\bigr\}$
and $k$ be any vertex of $\Gamma$.}  
\end{fact}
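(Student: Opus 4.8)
The plan is to prove that $\mu_k(\Gamma)$ remains a $D_\star$ diagram for the same $\star$, by a careful case analysis on the local situation around $k$, exactly mirroring Facts \ref{Lemma: A mutation steps} and \ref{Lemma: B mutations steps} but now accounting for the extra structure coming from the core sub-graphs $(\bigcirc,n)$, $\square$, $\boxslash$, $\bot$. First I would observe that the underlying undirected graph of a $D_\star$ diagram is always a union of $\nabla$ graphs glued along the core, so the degree of most vertices is three (internal $\nabla$-vertices have two children plus one parent plus an edge among siblings, hence degree three once the sibling edge is counted, while the leaves in the binary-tree picture together with the horizontal edges give degree two or three), and the only vertices of higher degree are the core vertices $a_i$ or $x_i$. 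Since mutation is local, it suffices to enumerate, up to the symmetry of the $\nabla$ graph and the symmetry of the core, all possible induced sub-graphs on $\{k\}\cup N(k)\cup N(N(k))$ that can occur, apply the mutation rule at $k$, and check that the result is again an induced full sub-graph of $D_\star$ containing the core and still cyclically oriented (in the appropriate sense).

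The key steps, in order, would be: (i) treat $k$ lying strictly inside some $\nabla_i$ graph — here the local picture is the same as in type $\mathbb{A}$ (a vertex of degree $\le 3$ with its neighbours forming triangles), so Fact \ref{Lemma: A mutation steps} already handles it and one only needs to note that $D_\star$ contains no cycles other than the core cycle $(\bigcirc,n)$ and the triangles of the $\nabla$ graphs, so no new long cycle can be created; (ii) treat $k = x_i$, a vertex joined to its two children in $\nabla_i$ and to the two core vertices $a_i, a_{i+1}$ (or $a_1,a_2$ for $\square,\boxslash,\bot$); mutating here creates/destroys the edge between $a_i$ and $a_{i+1}$ and the edge between the two children, and one checks this is precisely the move that takes $D_{(\bigcirc,n)}$ to itself (resp. $D_\square \leftrightarrow D_\square$, $D_\boxslash \leftrightarrow D_\boxslash$, $D_\bot\leftrightarrow D_\bot$, using that the edge $x_1x_2$ distinguishes $\boxslash$ from $\square$ and is preserved); (iii) treat $k = a_i$ a core vertex of $(\bigcirc,n)$: its neighbours are $a_{i-1}, a_{i+1}, x_{i-1}, x_i$ and mutating reverses the core cycle locally and shuffles the edges $a_{i-1}a_i x_{i-1}$ and $x_i a_{i+1} a_i$, again landing inside $D_{(\bigcirc,n)}$; the cases $k=a_1$ or $a_2$ in $\square,\boxslash,\bot$ are analogous but smaller. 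Throughout, the cyclic-orientability condition is preserved because $\mu_k$ preserves cyclic orientability of a diagram whose every induced cycle has length three (Fact \ref{Lemma: A mutation steps}-style check) and the one long cycle $(\bigcirc,n)$ is simply reoriented as a whole when one mutates one of its vertices.

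The main obstacle I expect is bookkeeping rather than conceptual difficulty: one must be sure that after mutation the new graph is still a \emph{full} sub-graph of $D_\star$ — i.e. mutation never produces an edge between two vertices that are non-adjacent in $D_\star$, nor a double edge, nor a weight-$2$ edge (since $D_\star$ diagrams are simply-laced for the type-$\mathbb{D}$ cases). For weights this is automatic because all edges in $D_\star$ have weight one and the mutation rule $\pm\sqrt c\pm\sqrt{c'}=\sqrt{ab}$ with $a=b=1$ forces $c'\in\{0,1\}$. For the ``no new adjacency'' point the argument is that any edge created by $\mu_k$ joins two former neighbours of $k$, and in $D_\star$ any two neighbours of a common vertex already lie in a common triangle or in the core cycle, hence are themselves joined by an edge of $D_\star$ (possibly currently absent from $\Gamma$ but present in the ambient graph) — so the new edge stays within $D_\star$. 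Finally, the condition $|\Gamma_0|\ge 5$ for $D_{(\bigcirc,n)}$ diagrams and the presence of all core vertices are visibly mutation-invariant since mutation changes neither the vertex set nor (as just argued) the embedding of the core.

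\begin{fact}
\emph{In the situation above, $\mu_k(\Gamma)$ is again a $D_\star$ diagram (for the same $\star$).}
\end{fact}
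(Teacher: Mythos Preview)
Your proposal rests on a misreading of the statement. Fact \ref{Lemma: D mutation steps} does \emph{not} assert that $\mu_k(\Gamma)$ is a $D_\star$ diagram for the \emph{same} $\star$; on the contrary, the detailed enumeration following the preamble records exactly how $\star$ changes. For instance, if $\star=(\bigcirc,n)$ and $k=x_i$, the paper asserts $\mu_k(\Gamma)$ is a $D_{(\bigcirc,n+1)}$ diagram; if $k=a_i$ and $n>3$ one obtains $D_{(\bigcirc,n-1)}$; if $n=3$ and $k=a_i$ one obtains $D_\square$ or $D_\bot$. Similarly, mutating a $D_\square$ diagram at $x_i$ yields $D_{(\bigcirc,3)}$ (or $D_\boxslash$ if $|\Gamma_0|=4$), and mutating a $D_\boxslash$ diagram at $a_i$ yields $D_\square$. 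Thus your formulated conclusion ``$\mu_k(\Gamma)$ is again a $D_\star$ diagram (for the same $\star$)'' is false in essentially every core case you list under (ii) and (iii).

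The error is also visible in your structural lemma. You claim that in $D_\star$ any two neighbours of a common vertex already lie in a common triangle or in the core cycle, hence are adjacent in the ambient graph. This fails at $x_i$ in $D_{(\bigcirc,n)}$: its neighbours are $a_i,a_{i+1}$ together with its two children in $\nabla_i$, and no child is adjacent to either $a_i$ or $a_{i+1}$ in $D_{(\bigcirc,n)}$. Mutation at $x_i$ (with the triangle $\{a_i,x_i,a_{i+1}\}$ oriented and, say, the left child--$x_i$--$a_i$ path oriented through $x_i$) creates precisely such an edge, which is why the resulting diagram sits inside $D_{(\bigcirc,n+1)}$ rather than $D_{(\bigcirc,n)}$. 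The analogous phenomenon at $a_i$ explains the drop $n\to n-1$. The correct content of the Fact is therefore closure of the \emph{union} $\bigcup_\star\{D_\star\text{ diagrams}\}$ under mutation, together with the explicit transition table; the case analysis you outline is the right shape for verifying that table, but each case must be checked against the correct target family rather than the original one.
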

\begin{enumerate}
\item If $k\notin\star_{0}$, then $\mu_{k}(\Gamma)$ is a $D_{\star}$
diagram.\\
 \\
 Else assume 
\item $\star=(\bigcirc,n)$. Then mutation in $k$ yields a

\begin{enumerate}
\item $D_{\square}$ diagram, if $n=3$, $k\in\{a_{i}\}_{1\leq i\leq3}$
and $x_{j}\in\Gamma_{0}$, where $x_{j}$ is the uniquely determined
vertex among $\{x_{i}\}_{1\leq i\leq3}$ not connected to $k$. 
\item $D_{\bot}$ diagram if $n=3$, $k\in\{a_{i}\}_{1\leq i\leq3}$ and
$x_{j}\notin\Gamma_{0}$ , $x_{j}$ as before. 
\item $D_{(\bigcirc,n-1)}$ diagram if $n>3$, $k\in\{a_{i}\}_{1\leq i\leq n}$. 
\item $D_{(\bigcirc,n+1)}$ diagram if $k\in\{x_{i}\}_{1\leq i\leq n}$.\\

\end{enumerate}
\item $\star=\square$. Then mutating in $k$ yields a

\begin{enumerate}
\item $D_{\bigcirc,3}$ diagram if $|\Gamma_{0}|>4$, $k\in\{x_{1},x_{2}\}$. 
\item $D_{\boxslash}$ diagram if $k\in\{a_{1},a_{2}\}$, or both $|\Gamma_{0}|=4$
and $k\in\{x_{1},x_{2}\}$\\

\end{enumerate}
\item $\star=\boxslash$. Then mutating in $k$ yields a

\begin{enumerate}
\item $D_{\square}$ diagram if $k\in\{a_{1},a_{2}\}$. 
\item $D_{\boxslash}$ diagram if $k=x_{i}$ for some $i\in\{1,2\}$ and
$k$ has a neighbour $y\in(\nabla_{i})_{0}$ such that the induced
subdiagram of $\Gamma$ on $\{y,x_{1},x_{2}\}$ carries non-linear
orientation and a $D_{\bot}$ diagram else.\\

\end{enumerate}
\item $\star=\bot$.

\begin{enumerate}
\item If $k\in\bot\setminus\{x\}$ , mutation in $x$ yields a $D_{\bot}$
diagram. 
\item Else $x=k$ and $\mu_{k}(\Gamma)$ is a

\begin{enumerate}
\item $D_{\boxslash}$ diagram if there exists a neighbour $y$ of $x$
such both $\{y,x,$\emph{$a_{i}\}_{1\leq i\leq2}$} carry linear orientation.
 
\item $D_{\bot}$ diagram if $x$ is of degree three and there exists a
neighbour $y\notin\bot_{0}$ of $x$ such both $\{y,x,a_{i}\}_{1\leq i\leq2}$
carry non-linear orientation. 
\item $D_{(\bigcirc,3)}$ diagram else.\\

\end{enumerate}
\end{enumerate}
\end{enumerate}
Before proving the Theorem for type $\mathbb{D}$, let us state the
following Lemma, which shall prove to be very convenient for the treatment
of types $\mathbb{B}^{(1)}$ and $\mathbb{D}^{(1)}$ as well.

\begin{lem}
\label{Lemma shrink cycle}Let $n\geq3$ and $C$ be a cyclically
oriented simply-laced diagram with underlying undirected graph $A_{n-1}^{(1)}$,
whose vertices are labeled by $\{1,...,n\}$ as depicted below, $\Gamma'$
be any diagram, $y\in\Gamma'_{0}$. Denote by $\Gamma$ the diagram
obtained by adding the arrows $\{y\rightarrow1\}$ and $\{n\rightarrow y\}$
of weight one. If $\mu:=\mu_{n-2}\circ\cdots\circ\mu_{2}\circ\mu_{1}$,
then $\mu(\Gamma)\setminus\Gamma'$ has underlying undirected graph
$D_{n}$ and $\{1\rightarrow y\}$ is the only element of $\Gamma_{1}$
between $(\mu(\Gamma)\setminus\Gamma')_{0}$ and $\Gamma_{0}'$ in
any direction. Moreover the induced subgraph on $\{y\}\cup\{1,..,n\}$
is simply-laced.

\[
\xyR{.5pc}\xyC{1.2pc}\xymatrix{ & {} &  & 1\ar[dd] & 2\ar[l] &  &  &  &  & {} &  &  &  & n\\
{}\ar@{--}'[dr]'[rr]'[ur]'[] & \Gamma' & y\ar[ur] &  &  & {}\ar@{-->}[ul] & {}\ar[r]^{\mu} & {} & {}\ar@{--}'[dr]'[rr]'[ur]'[] & \Gamma' & y & 1\ar[l]\ar@{--}[r] & n-2\\
 & {} &  & n\ar[ul]\ar@{-->}[r] & {}\ar@{-->}[ur] &  &  &  &  & {} &  &  &  & n-1\ar@{-}'[ul]'[uu]}
\]

\end{lem}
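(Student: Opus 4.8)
The plan is to strip the cycle one vertex at a time: I show that the single mutation $\mu_1$ reproduces the hypotheses of the Lemma with $n$ replaced by $n-1$, and then induct on $n\ge 3$. For the induction to close I will prove the sharper statement that $\mu(\Gamma)\setminus\Gamma'$ has underlying graph $D_n$ realised so that $1-2-\cdots-(n-2)$ is a path and $n-1,n$ are both attached to $n-2$ — this is exactly what computing $\mu$ produces, and it records the position of the distinguished vertex $1$.

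I take the orientation of $C$ drawn in the figure, so that the edge of $C$ joining $1$ and $n$ points $1\to n$; then $C$ is the directed cycle $2\to 1\to n\to(n-1)\to\cdots\to 3\to 2$, and together with the added arrows $y\to 1$, $n\to y$ the triangle on $\{1,n,y\}$ is the directed $3$-cycle $y\to 1\to n\to y$. In particular the only edges of $\Gamma$ between $\{1,\dots,n\}$ and $\Gamma'_0$ are $y\to 1$ and $n\to y$, both of weight one.

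Now compute $\mu_1(\Gamma)$ (for $n\ge 4$; for $n=3$ the same computation gives the base case directly). The vertex $1$ has exactly the neighbours $2,n,y$, with $2\to 1$, $1\to n$, $y\to 1$; mutating reverses these, and the only oriented two-edge paths through $1$ are $2\to 1\to n$ and $y\to 1\to n$. For the first, $2$ and $n$ are not adjacent in $\Gamma$ (the underlying graph of $C$ is an $n$-cycle and $n\ge 4$), so a new edge $2\to n$ of weight one is created; for the second, $\{y,n\}$ lies on the directed triangle $y\to 1\to n\to y$ and is deleted. Nothing else changes. Hence $\mu_1(\Gamma)$ is $\Gamma'$ with a new vertex $1$ attached by $1\to y$, together with the directed $(n-1)$-cycle $\widetilde C=(2\to n\to(n-1)\to\cdots\to 3\to 2)$ on $\{2,\dots,n\}$ and the arrows $1\to 2$, $n\to 1$; here $1$ touches only $2,n,y$, the unique edge to $\Gamma'_0$ is $1\to y$, and the induced sub-diagram on $\{1,\dots,n,y\}$ is simply-laced. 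Relabelling $\{2,\dots,n\}$ by $i\mapsto i-1$, this is precisely the "$\Gamma$" of the Lemma for the parameter $n-1$: $\widetilde C$ (cyclically oriented) in the role of $C$, $\Gamma'\cup\{1\}$ in the role of $\Gamma'$, the single vertex $1$ in the role of $y$, and $1\to 2$, $n\to 1$ the two added arrows.

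Since $\mu_{n-2}\circ\cdots\circ\mu_2$ acts on the relabelled vertices as $\mu_{(n-1)-2}\circ\cdots\circ\mu_1$, the sharpened inductive hypothesis gives that $\mu(\Gamma)=(\mu_{n-2}\circ\cdots\circ\mu_2)(\mu_1(\Gamma))$, after deleting $\Gamma'\cup\{1\}$, is $D_{n-1}$ on $\{2,\dots,n\}$ with the path $2-3-\cdots-(n-2)$ and $n-1,n$ attached to $n-2$, joined to $\Gamma'\cup\{1\}$ only by $2\to 1$, and simply-laced on $\{1,\dots,n\}$. Reinserting the vertex $1$ with its edge $1\to 2$ extends the path to $1-2-\cdots-(n-2)$, so $\mu(\Gamma)\setminus\Gamma'$ is $D_n$ in the asserted form; the only edge of $\mu(\Gamma)$ between $\{1,\dots,n\}$ and $\Gamma'_0$ is $1\to y$, which none of $\mu_2,\dots,\mu_{n-2}$ touches (none involves $y$); and the induced sub-diagram on $\{y\}\cup\{1,\dots,n\}$ is simply-laced, being so on $\{1,\dots,n\}$ by the hypothesis and with $1\to y$ of weight one. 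The delicate point is the analysis of $\mu_1$: that the cycle keeps $2$ and $n$ apart (so a genuinely new weight-one edge is born rather than an existing one altered), that the orientation makes $y\to 1\to n\to y$ directed (so $\{y,n\}$ is removed), and that no edge appears between $\Gamma'_0$ and the rest — exactly the facts that let the single vertex $1$ take over the role of $y$.
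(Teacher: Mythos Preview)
Your proof is correct and follows essentially the same inductive approach as the paper: reduce to the $(n-1)$ case by showing that $\mu_1$ shrinks the cycle and lets vertex $1$ take over the role of $y$, with $n=3$ as the base case. The paper's proof records exactly these two facts as a pair of diagrams without further commentary; you have simply written out the details and strengthened the induction hypothesis to pin down the shape of the resulting $D_n$.
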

\begin{proof}
The statement follows by induction using the following two facts \[
\xyR{.5pc}\xyC{1.2pc}\xymatrix{ & 1\ar[dd] & 2\ar[l] &  &  & 2\ar[dd] &  &  & 1\ar[dd] &  &  & 1\ar[dl]\ar[dr]\\
y\ar[ur] &  & \ar@{<->}[r]^{\mu_{1}} & y & 1\ar[l]\ar[ur] &  &  & y\ar[ur] &  & 2\ar[ul]\ar@{<->}[r]^{\mu_{1}} & y &  & 2\\
 & n\ar[ul]\ar[r] & n-1 &  &  & n\ar[ul]\ar[r] & n-1 &  & 3\ar[ul]\ar[ur] &  &  & 3\ar[uu]}
\]

\end{proof}
Now we can easily prove:

\begin{thm*}
\textbf{\emph{\ref{Theorem (mutation classes)}}} \emph{(3)} Let $\Gamma$
be a connected diagram. Then $\Gamma$ is of mutation type $\mathbb{D}$
if and only if it is a $D_{\star}$ diagram, where $\star\in\bigl\{(\bigcirc,n)_{n\geq3},\square,\boxslash,\bot\bigr\}$. 
\end{thm*}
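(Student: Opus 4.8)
The plan is to mirror the proofs already given for types $\mathbb{A}$ and $\mathbb{B}$: one direction is immediate from the mutation-invariance result (Fact \ref{Lemma: D mutation steps}), and the other goes by induction on $|\Gamma_0|$, peeling off one vertex, applying the inductive hypothesis to obtain a diagram whose underlying graph is $D_n$, and then analysing how the last vertex can be attached.

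For the ``only if'' direction, suppose $\Gamma$ has mutation type $\mathbb{D}$. Any diagram with underlying graph $D_n$ is a $D_{(\bigcirc,n)}$ diagram for the appropriate labelling (in fact the degenerate case where the central cycle has collapsed), hence lies in one of the listed families; since Fact \ref{Lemma: D mutation steps} says each of the families $D_{(\bigcirc,n)}, D_\square, D_\boxslash, D_\bot$ is carried into itself or into another of these four under a single mutation, the whole mutation class is contained in $\bigcup_\star \{D_\star\text{ diagrams}\}$. Combined with the Remark after Definition \ref{def:D} that these four families are mutually disjoint, this gives that $\Gamma$ is a $D_\star$ diagram for exactly one $\star$.

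For the ``if'' direction I proceed by induction on $|\Gamma_0|$, where $\Gamma$ is a $D_\star$ diagram. The base case is a small diagram ($\bot$ or $\square$ or $(\bigcirc,3)$ with five vertices, etc.), which one checks directly is mutation equivalent to a $D_m$-diagram using Fact \ref{Lemma: D mutation steps}(5) and (3)-(4) — e.g. mutating at $x$ in a $\bot$ diagram produces a $D_{(\bigcirc,3)}$ diagram, and mutating at an $a_i$ in a minimal $D_{(\bigcirc,3)}$ diagram with a missing $x_j$ lands in a $D_\bot$ diagram of fewer relevant vertices. For the inductive step, I choose a vertex $y$ so that the induced sub-diagram $\Gamma'$ on $\Gamma_0\setminus\{y\}$ is still a $D_{\star'}$ diagram (connected, and still containing the required core vertices — possible because the infinite graphs are built from $\nabla$ graphs, which are ``leaf-rich''); one must take $y$ in the outer part of a $\nabla$ graph, or handle separately the case $\star = (\bigcirc,n)$ with $n$ large where peeling an $a_i$ drops to $(\bigcirc,n-1)$. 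Apply the inductive hypothesis to get $\mu$ with $\mu(\Gamma')$ of underlying type $D_m$; then $\mu(\Gamma)$ is a $D_{\star''}$ diagram (Fact \ref{Lemma: D mutation steps} again) with underlying graph $D_m$ plus the vertex $y$ attached to some of its vertices, and by the disjointness remark the underlying graph together with the cyclic-orientation constraints pins down which $D_\star$ family we are in. The crucial technical tool here is Lemma \ref{Lemma shrink cycle}: if after the inductive mutations the central $(\bigcirc,k)$ cycle has not yet collapsed, applying $\mu_{k-2}\circ\cdots\circ\mu_1$ to that cycle turns it into a path, i.e. produces a genuine $D_k$-type tail, while keeping the attachment to the rest simply-laced and via a single edge; iterating this collapses the diagram to underlying type $D_{|\Gamma_0|}$.

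The main obstacle is the bookkeeping in the inductive step: one must verify that a vertex $y$ can always be removed so that $\Gamma'$ remains a legitimate $D_{\star'}$ diagram — in particular that the removal does not disconnect $\Gamma$, does not delete a core vertex of $\star'$, and respects the $|\Gamma_0|\geq 5$ constraint for the $(\bigcirc,n)$ families — and that when $y$ is reattached to the $D_m$ obtained from $\mu$, the cyclic-orientability hypothesis on $\Gamma$ (inherited by $\mu(\Gamma)$ since cyclic orientability is preserved, cf. \cite{M.toappear}) forces $y$ to be attached either to a single vertex or to two adjacent vertices of the $D_m$, exactly as in the type $\mathbb{A}$ argument, so that a final mutation at $y$ (or a short sequence reconstructing the cycle via the converse of Lemma \ref{Lemma shrink cycle}) yields a diagram of underlying Dynkin type $\mathbb{D}$. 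Once $y$ is attached to at most two adjacent vertices, the remaining cases — $y$ a leaf giving $D_{m+1}$, or $y$ completing the fork of $D_{m+1}$, or $y$ creating a small cycle that Lemma \ref{Lemma shrink cycle} then shrinks — are routine, exactly parallel to the computations displayed for types $\mathbb{A}$ and $\mathbb{B}$.
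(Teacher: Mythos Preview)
Your approach diverges from the paper's in a substantive way, and the divergence opens a real gap.

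First a small correction: a diagram with underlying graph $D_n$ is a $D_{\bot}$ diagram, not a $D_{(\bigcirc,n)}$ diagram (the latter requires an honest $A_{n-1}^{(1)}$ cycle and at least five vertices). This is easy to fix but matters for the ``only if'' direction.

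The paper does \emph{not} run the ``if'' direction by induction on $|\Gamma_0|$ with a peel-and-reattach argument. Instead it gives a direct sequence of mutations: using Fact~\ref{Lemma: D mutation steps} (5)(b), (4)(a), (3)(a) it mutates any $D_{\bot}$, $D_{\boxslash}$, $D_{\square}$ diagram into some $D_{(\bigcirc,m)}$ diagram; then it repeatedly applies (2)(d) to grow the central cycle until the diagram is a single oriented $n$-cycle; finally Lemma~\ref{Lemma shrink cycle} turns that cycle into a diagram with underlying graph $D_n$. No induction, no reattachment analysis.

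Your inductive scheme, by contrast, leaves the closing step unproved. After you obtain $\mu(\Gamma')$ with underlying graph $D_m$ and know that $\mu(\Gamma)$ is some $D_{\star''}$ diagram equal to ``$D_m$ plus the vertex $y$'', you still have to show this particular $D_{\star''}$ diagram is mutation equivalent to $D_{m+1}$. You assert that cyclic orientability forces $y$ to attach to at most two adjacent vertices ``exactly as in the type $\mathbb{A}$ argument'', but the $D_m$ graph has a branch vertex of degree three, so the simple ``two neighbours must be adjacent'' counting from the $A_n$ proof no longer suffices; and even once $y$ is attached to one or two vertices, you have several structurally distinct cases (attaching at the fork, along the tail, forming a triangle with the branch vertex, etc.) whose reduction to $D_{m+1}$ you do not carry out. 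Invoking Lemma~\ref{Lemma shrink cycle} here does not help directly, since after your induction there is typically no large oriented cycle left to shrink. Also, the claim that cyclic orientability is preserved under mutation ``cf.~\cite{M.toappear}'' is not correct in general; what you actually need (and have) is that $\mu(\Gamma)$ is a $D_{\star''}$ diagram by Fact~\ref{Lemma: D mutation steps}, and those are cyclically oriented by definition.

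In short: the paper's direct ``reduce to a full cycle, then shrink'' argument sidesteps exactly the reattachment case analysis that your inductive proof would still owe.
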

\begin{proof}
Any diagram with underlying undirected graph of Dynkin type $\mathbb{D}$
is a $D_{\bot}$ diagram, hence Fact \ref{Lemma: D mutation steps}
implies that any diagram of mutation type $\mathbb{D}$ is a $D_{\star}$
diagram for some $\star\in\bigl\{(\bigcirc,n)_{n\geq3},\square,\boxslash,\bot\bigr\}$.

To show the other direction, first assume that $|\Gamma_{0}|=4$.
Then $\star\notin\{(\bigcirc,n)_{n\geq3}\}$ by definition. Moreover,
by Fact \ref{Lemma: D mutation steps} (3) (b) and (4) (b) we may
assume that $\star=\bot$. Thus $\Gamma$ has underlying undirected
graph $D_{4}$.

For $|\Gamma_{0}|>4$ we first show that any $D_{\star}$ diagram
with $\star\in\{\bot,\boxslash,\square\}$ is mutation equivalent
to a $D_{(\bigcirc,m)}$ diagram, $m\leq|\Gamma_{0}|$. By Fact \ref{Lemma: D mutation steps}
(5) (b) we may restrict to $\star\in\{\boxslash,\square\}$; here
we use that if we are in the situation of subcase (5) (b) (ii), then
$\mu_{a_{1}}(\Gamma)$ is as described in one of the other subcases
of (5) (b). Now (4) (a) of the same Fact allows for further restriction
to $\star=\square$, hence we may apply (3) (a).

Thus we may assume that $\Gamma$ is a $D_{(\bigcirc,m)}$ diagram,
$m\leq|\Gamma_{0}|$. Set $n:=|\Gamma_{0}|$ and apply Fact \ref{Lemma: D mutation steps}
(2) (d) until $\Gamma$ is a cycle on $n$ vertices. We infer from
Lemma \ref{Lemma shrink cycle} that $\Gamma$ is mutation equivalent
to a diagram with underlying undirected graph $D_{n}$. 
\end{proof}

\subsection{Type $\mathbb{B}^{(1)}$}

Besides the difficulty of keeping track of notations (cf. subsection
\ref{sub:The-diagrams-for B^(1)}), the next Fact is a straightforward
exercise in diagram mutation. In order not to get puzzled by the technical
details of the first case treated below, we recommend to read the
paragraph directly before Definition \ref{def: B_star_wedge_B} once
more.

\begin{fact}
\label{lem: B^(1) mutation steps} \emph{The following is true.} 
\end{fact}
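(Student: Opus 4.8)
The final statement to prove is Fact~\ref{lem: B^(1) mutation steps}, which asserts that the families of diagrams defined for type $\mathbb{B}^{(1)}$ (namely the $B_{\star,B}$, $B_{\star\wedge\star'}$ and $B_{\square\wedge\boxslash}$ diagrams) are closed under mutation at any vertex. The plan is to follow exactly the template already used for Facts~\ref{Lemma: A mutation steps}, \ref{Lemma: B mutations steps} and \ref{Lemma: D mutation steps}: mutation is a local operation, so for each family we classify the possible local configurations of a vertex $k$ (its neighbours, the weights on incident edges, and the orientation of any triangle through $k$) and check case by case that $\mu_k$ carries the diagram into one of the allowed $\mathbb{B}^{(1)}$ families. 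The bulk of the work is bookkeeping, and the previously established Facts do most of the heavy lifting: away from the ``gluing vertex'' and the $B$-part, a $B_{\star,B}$ diagram looks locally like a $D_\star$ diagram (so Fact~\ref{Lemma: D mutation steps} applies) or like a $B$ diagram (so Fact~\ref{Lemma: B mutations steps} applies).

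Concretely, I would organize the proof by the location of $k$. First, if $k$ lies strictly inside the $\nabla$-part of either the $D_\star$ half or the $B$ half (and is not the distinguished vertex $x_i$, $x'$ or the gluing vertex $y=y'$), then the local picture is identical to one already analyzed in Fact~\ref{Lemma: A mutation steps} or \ref{Lemma: B mutations steps}, and $\mu_k$ visibly stays in the same family with the same ``splitting data.'' Second, if $k$ is one of the $x_i$ or is a vertex in the $(\bigcirc,n)$ cycle, I invoke Fact~\ref{Lemma: D mutation steps}(2)--(5): mutation may change $\star$ to a neighboring type (e.g.\ $(\bigcirc,n)\leftrightarrow(\bigcirc,n\pm1)$, $\square\leftrightarrow\boxslash\leftrightarrow\bot$), and one checks that the glued $B$-diagram is unaffected, so $\mu_k(\Gamma)$ is a $B_{\star',B}$ diagram for the new type $\star'$ — here one must also verify that the constraint defining admissible gluing vertices ($y\notin\{a_i\}$, degree conditions, neighbours-in-$\star_0$ condition) is preserved, which is the one genuinely new ingredient. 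Third, the delicate case is $k=y=y'$, the gluing vertex itself: here the two-edge paths through $k$ can run between the $D_\star$-part and the $B$-part, potentially creating new edges that cross the ``waist'' of the diagram. One shows that because $y$ has degree at most two on at least one side (by the definition of a $B_{\star,B}$ diagram), such crossing edges either do not appear or precisely realize a different valid splitting of the same $B_{\star,B}$ diagram, so the mutation class of families is still closed; the width $w$ may change by one, which is fine.

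For the ``$\wedge$'' families ($B_{\square\wedge B}$, $B_{\boxslash\wedge B}$, $B_{(\bigcirc,n)\wedge B}$, $B_{(\bigcirc,n)\wedge\overleftrightarrow B}$, $B_{\boxslash\wedge\square}$, $B_{\square\wedge\boxslash}$, $B_{B\wedge B}$ from type $\mathbb{C}^{(1)}$ is separate) the analysis is similar but the infinite graph is fixed, so one must additionally track how the weight-two (and weight-four) edges at the distinguished vertices transform. The key local moves to draw out are: mutation at $a_1$ or $a_2$ toggling between the $\square$-shape and the $\boxslash$-shape on the finite core while the weighted $\nabla$-part goes along for the ride; mutation at $x_1$ or $x_2$ (the weight-two vertices); and, for the $\overleftrightarrow B$ variant, the bookkeeping of the non-cyclically-oriented triangle among $\{a_i\}$ under $\mu_e$-type moves. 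I expect the main obstacle to be the gluing-vertex case for the $B_{\star,B}$ family and the analogous degree-two ``neck'' vertices for the $\wedge$-families: verifying that no mutation can create an edge that escapes the allowed infinite graph requires carefully using the degree-$\le 2$ hypotheses, and this is where a handful of explicit small diagrams (in the style of the figures accompanying Facts~\ref{Lemma: A mutation steps} and \ref{Lemma: B mutations steps}) should be drawn to make the argument convincing; everything else reduces to the earlier Facts plus routine case-checking that I would leave to the reader, exactly as the paper does for Facts~4.3--4.7.
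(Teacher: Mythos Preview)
Your proposal is correct and follows essentially the same approach as the paper: a local case analysis of the effect of $\mu_k$ depending on where $k$ sits, leaning on Facts~\ref{Lemma: A mutation steps}, \ref{Lemma: B mutations steps} and \ref{Lemma: D mutation steps} for the parts of the diagram that look like $A$, $B$ or $D_\star$ diagrams. Indeed the paper itself gives no proof beyond declaring the Fact a ``straightforward exercise in diagram mutation''; your outline is a reasonable way to carry out that exercise.

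One organizational remark: the paper's statement of the Fact (and hence its implicit proof) is structured first by the family of $\Gamma$ --- $B_{\star,B}$, then $B_{(\bigcirc,n)\wedge\star'}$, then the remaining $B_{\star\wedge\star'}$ --- and only then by the position of $k$, whereas you propose to slice first by the position of $k$ across all families. Either decomposition works, but the paper's ordering makes the bookkeeping slightly cleaner because the ``gluing'' data and the width function $w$ are defined family by family, and the delicate cases (your third case, $k$ at the waist) are precisely the transitions \emph{between} families, which the paper records explicitly as the sub-cases (1)(c), (2)(e)--(f), (3)(b). In particular, the content you flag as the ``one genuinely new ingredient'' --- checking that the admissibility conditions on the gluing vertex survive mutation --- is exactly what the paper's detailed sub-case list is encoding, and your proposal correctly anticipates that this is where the work lies.
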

\begin{enumerate}
\item Let $\Gamma$ be a $B_{\star,B}$ diagram, where $\star\in\{(\bigcirc,n)_{n\geq3},\square,\boxslash,\bot\}$.
Recall that the endpoints of $\omega_{\Gamma}$ are labeled $x_{i}$
for some $i$ and $x'$. Let $\Gamma_{\setminus x'}$ denote the subgraph
of $\Gamma$ induced on $\Gamma_{0}\setminus\{x'\}$. Then $\Gamma_{\setminus x'}$
is a $D_{\star}$ diagram and a full connected subdiagram of $\Gamma$.
For $k$ any vertex of $\Gamma$ we distinguish between two cases.
If $k\in(\Gamma_{\setminus x'})_{0}$ then $\mu_{k}(\Gamma_{\setminus x'})$
is a $D_{\star'}$ diagram for some $\star'\in\bigl\{(\bigcirc,n)_{n\geq n},\square,\boxslash,\bot\bigr\}$
by Fact \ref{Lemma: D mutation steps}. Otherwise we simply set $\star'=\star$.
Then the following is true:

\begin{enumerate}
\item if $w(\Gamma)>0$, then $\mu_{k}(\Gamma)$ is a $B_{\star',B}$ diagram
and \emph{$|w(\mu_{k}(\Gamma))-w(\Gamma)|\leq1$ .} 
\item if $w(\Gamma)=0$ and $k\neq x_{i}$, then $\mu_{k}(\Gamma)$ is a
$B_{\star',B}$ diagram and \emph{$w(\Gamma)\leq w(\mu_{k}(\Gamma))\leq w(\Gamma)+1$}
. 
\item otherwise $w(\Gamma)=0$ and $k=x_{i}$ which is a neighbour of $x'$.
Assume that $\star$ equals

\begin{enumerate}
\item $(\bigcirc,n)$. Then $\mu_{k}(\Gamma)$ is a $B_{(\bigcirc,n+1)\wedge B}$
diagram. 
\item $\square$. Then $\mu_{k}(\Gamma)$ is a $B_{(\bigcirc,3)\wedge B}$
diagram. 
\item $\boxslash$. Then $\mu_{k}(\Gamma)$ is a

\begin{enumerate}
\item $B_{\boxslash\wedge B}$ diagram if $\{x',x{}_{1},x{}_{2}\}$ carries
non-linear orientation. 
\item $B_{\boxslash,B}$ diagram of width zero if $\{x',x{}_{1},x{}_{2}\}$
carries linear orientation and $\mbox{deg}(x)=5$. 
\item $B_{\bot,B}$ diagram of width zero else. 
\end{enumerate}
\item $\bot$. Then $\mu_{k}(\Gamma)$ is a

\begin{enumerate}
\item $B_{\boxslash\wedge B}$ diagram if both $\{x',x,a_{i}\}_{1\leq i\leq2}$
carry linear orientation. 
\item $B_{\boxslash,B}$ diagram of width zero if both $\{x_{},x',a_{i}\}_{1\leq i\leq2}$
carry non-linear orientation and $\mbox{deg}(x)=4$. 
\item $B_{\bot,B}$ diagram of width zero if both $\{x_{},x',a_{i}\}_{1\leq i\leq2}$
carry non-linear orientation and $\mbox{deg}(x)=3$. 
\item $B_{(\bigcirc,3)\wedge B}$ diagram else.\\

\end{enumerate}
\end{enumerate}
\end{enumerate}
\item Let $\Gamma$ be a $B_{(\bigcirc,n)\wedge\star}$ diagram, $\star\in\{B,\overleftrightarrow{B}\}$.

\begin{enumerate}
\item If $k\notin(\bigcirc,n)\wedge B$ then $\mu_{k}(\Gamma)$ is a $B_{(\bigcirc,n)\wedge\star}$
diagram. 
\item If $k=x_{1}$ then $\mu_{k}(\Gamma)$ is a $B_{(\bigcirc,n)\wedge\overleftrightarrow{B}}$
diagram if $\star=B$ and a $B_{(\bigcirc,n)\wedge B}$ diagram if
$\star=\overleftrightarrow{B}$. 
\item If $k\in\{x_{i}\}_{i\neq1}$ then $\mu_{k}(\Gamma)$ is a $B_{(\bigcirc,n+1)\wedge\star}$
diagram. 
\item If $k\in\{a_{i}\}_{i\neq1,2}$ and $n>3$, then $\mu_{k}(\Gamma)$
is a $B_{(\bigcirc,n-1)\wedge\star}$ diagram. 
\item If $n=3$ and $k=a_{3}$, then $\mu_{k}(\Gamma)$ is a $B_{\square\wedge B}$
diagram if $\star=B$. Else $\star=\overleftrightarrow{B}$ and $\mu_{k}$
yields a $B_{\boxslash\wedge\square}$ diagram. 
\item Else $k\in\{a_{1},a_{2}\}$. Assume $\star$ equals

\begin{enumerate}
\item $\overleftrightarrow{B}$. Then $\mu_{k}(\Gamma)$ is a $B_{(\bigcirc,n)\wedge\overleftrightarrow{B}}$
diagram. 
\item $B$. Then

\begin{enumerate}
\item for $n>3$, $\mu_{k}(\Gamma)$ is a $B_{(\bigcirc,n-1),B}$ diagram
of width zero. 
\item for $n=3$, $\mu_{k}(\Gamma)$ is a $B_{\square,B}$ diagram of width
zero if $x_{i}\in\Gamma_{0}$, where $x_{i}$ is the uniquely determined
vertex among $\{x_{j}\}_{1\leq j\leq3}$ not connected to $k$ and
a $B_{\bot,B}$ diagram of width zero else.\\

\end{enumerate}
\end{enumerate}
\end{enumerate}
\item Let $\Gamma$ be a $B_{\star\wedge\star'}$ diagram, $\star,\star'\in\{\square,\boxslash,B\}$.

\begin{enumerate}
\item If $k\notin(\star\wedge\star')_{0}$ then $\mu_{k}(\Gamma)$ is a
$B_{\star\wedge\star'}$ diagram. 
\item Else $k\in(\star\wedge\star')_{0}$.

\begin{enumerate}
\item If $k=x_{2}$ then $\mu_{k}(\Gamma)$ is a

\begin{enumerate}
\item $B_{(\bigcirc,3)\wedge B}$ diagram if $\star\wedge\star'=\square\wedge B$. 
\item $B_{\boxslash,B}$ diagram of width zero if $\star\wedge\star'=\boxslash\wedge B$
and there exists a neighbour $y\notin(\boxslash\wedge B)_{0}$ of
$x_{2}$ such that $\{x_{1},x_{2},y\}$ carries non-linear orientation. 
\item $B_{\bot,B}$ of width zero if $\star\wedge\star'=\boxslash\wedge B$
and there exists a neighbour $y\notin(\boxslash\wedge B)_{0}$ of
$x_{2}$ such that $\{x_{1},x_{2},y\}$ carries linear orientation. 
\item $B_{(\bigcirc,3)\wedge\overleftrightarrow{B}}$ diagram if $\star\wedge\star'=\boxslash\wedge\square$. 
\end{enumerate}
\item If $k=x_{1}$ then $\mu_{k}(\Gamma)$ is a

\begin{enumerate}
\item $B_{(\boxslash\wedge\square)}$ diagram if $\star\wedge\star'=\square\wedge B$
and vice versa. 
\item $B_{(\boxslash\wedge B)}$ diagram if $\star\wedge\star'=\boxslash\wedge B$. 
\end{enumerate}
\item Else $k\in\{a_{1},a_{2}\}$ and $\mu_{k}(\Gamma)$

\begin{enumerate}
\item $B_{(\boxslash\wedge B)}$ diagram if $\star\wedge\star'=\square\wedge B$
and vice versa. 
\item $B_{(\boxslash\wedge\square)}$ if $\star\wedge\star'=\boxslash\wedge\square$.\\

\end{enumerate}
\end{enumerate}
\end{enumerate}
\end{enumerate}
The idea for the difficult part of the proof of Theorem \ref{Theorem (mutation classes)}
(4) is to apply the above result to show that any diagram belonging
to one of the families defined in subsection \ref{sub:The-diagrams-for B^(1)}
is of mutation type $\mathbb{B}^{(1)}$. This is done by reducing
successively the cases to be considered, until we may apply Lemma
\ref{Lemma shrink cycle}.

\begin{thm*}
\textbf{\emph{\ref{Theorem (mutation classes)}}} \emph{(4)} Let $\Gamma$
be a connected diagram. Then $\Gamma$ is of mutation type \emph{$\mathbb{B}^{(1)}$}
if and only if it is a $B_{\star,B},B_{\star\wedge\star'}$ diagram,
where $\star\in\bigl\{(\bigcirc,n)_{n\geq3},\square,\boxslash,\bot\bigr\}$
and $\star'\in\{B,\overleftrightarrow{B}\}$, or a $B_{\square\wedge\boxslash}$
diagram. 
\end{thm*}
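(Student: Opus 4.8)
The plan is to mimic the proofs for types $\mathbb{A}$, $\mathbb{B}$ and $\mathbb{D}$. Write $\mathcal{F}$ for the union of all the families occurring in part (4). One inclusion is easy: a diagram with underlying undirected graph $B_n^{(1)}$ is a $B_{\bot,B}$ diagram of width zero. Indeed, take the $D_\bot$ diagram to be a $D_m$-shaped tree (the fork $\{a_1,a_2,x_1\}$ together with a descending path inside $\nabla_1$), take the $B$ diagram to be a path carrying a weight-two edge at $x'$, and identify the free endpoint of that path with the leaf of the $D$-half opposite the fork. Since Fact \ref{lem: B^(1) mutation steps} asserts that each $\mu_k$ maps a member of $\mathcal{F}$ to a member of $\mathcal{F}$, the whole mutation class of $B_n^{(1)}$ lies in $\mathcal{F}$; this is the direction which uses the Fact as a black box.

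For the converse, let $\Gamma\in\mathcal{F}$; the goal is to exhibit a sequence of mutations carrying $\Gamma$ to a diagram of underlying type $B_n^{(1)}$, reducing the case distinction step by step. First I would eliminate the $\wedge$-families: parts (2) and (3) of Fact \ref{lem: B^(1) mutation steps} provide, for each of $B_{(\bigcirc,n)\wedge\star'}$, $B_{\square\wedge B}$, $B_{\boxslash\wedge B}$ and $B_{\square\wedge\boxslash}$, a short mutation sequence landing in some $B_{\star'',B}$ diagram --- for instance $\mu_{a_1}$ sends $B_{(\bigcirc,n)\wedge B}$ to $B_{(\bigcirc,n-1),B}$ (or, for $n=3$, to a $B_{\square,B}$ or $B_{\bot,B}$ diagram of width zero), the $\overleftrightarrow{B}$-variants are first brought to the $B$-variant by $\mu_{x_1}$, $\mu_{x_2}$ sends $B_{\boxslash\wedge B}$ directly to a $B_{\boxslash,B}$ or $B_{\bot,B}$ diagram of width zero, and $\mu_{x_2}$ carries $B_{\square\wedge B}$ and $B_{\square\wedge\boxslash}$ into one of the $(\bigcirc,3)\wedge$-families, escaped as before. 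Hence we may assume $\Gamma$ is a $B_{\star,B}$ diagram.

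It remains to treat the $B_{\star,B}$ diagrams, and here I would reduce to the situation already handled for type $\mathbb{D}$. Let $\Gamma_1$ be the $D_\star$ sub-diagram furnished by Fact \ref{lem: B^(1) mutation steps}(1). Parts (1)(a)--(b) of that Fact show that mutating at a vertex of $\Gamma_1$ other than the endpoint $x_i$ of $\omega_\Gamma$ keeps $\Gamma$ a $B_{\star',B}$ diagram with $\Gamma_1$ replaced by $\mu_k(\Gamma_1)$, and that mutating strictly inside the $B$-half keeps $\Gamma$ a $B_{\star,B}$ diagram; so, applying the proof of Theorem \ref{Theorem (mutation classes)}(3) to $\Gamma_1$ and the proof of Theorem \ref{Theorem (mutation classes)}(2) to the $B$-half, we may assume $\Gamma_1$ is a $D_{(\bigcirc,m)}$ diagram and the $B$-half is a path. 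One then proceeds exactly as in the proof of Theorem \ref{Theorem (mutation classes)}(3): push the $\nabla$-graphs of $\Gamma_1$ into the cycle via Fact \ref{Lemma: D mutation steps}(2)(d), so that the underlying graph of $\Gamma$ is a cycle with a single path attached, and then apply Lemma \ref{Lemma shrink cycle} (taking $C$ to be the cycle and $\Gamma'$ the attached path) to replace the cycle by a $D$-shaped tree. The resulting underlying graph is a path with a fork at one end and a weight-two edge at the other, i.e. $B_N^{(1)}$ with $N=|\Gamma_0|-1$, which finishes the argument.

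The step I expect to be the main obstacle is the normalization in the last paragraph together with its interaction with the case split of Fact \ref{lem: B^(1) mutation steps}(1): whenever reducing $\Gamma_1$ or the $B$-half forces a mutation at $x_i$ or at the gluing vertex, case (1)(c) throws $\Gamma$ back into one of the $\wedge$-families and can change $\star$, so the reduction is not monotone. Arranging it to be well-founded --- for example by an outer induction on $|\Gamma_0|$, so that after re-escaping the $\wedge$-families the diagram one re-normalizes is strictly smaller, or by producing a suitable secondary invariant controlling the $D$-half --- is where the genuine care is needed; once such an ordering is in place every individual step is a routine diagram computation supplied by Facts \ref{lem: B^(1) mutation steps}, \ref{Lemma: D mutation steps} and Lemma \ref{Lemma shrink cycle}, just as for the earlier types.
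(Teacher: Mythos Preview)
Your forward direction is fine (modulo the harmless slip that a $B_n^{(1)}$ graph is a $B_{\bot,B}$ diagram of width $n-3$, not width zero). For the converse you are running the reduction in the \emph{opposite} direction from the paper, and the obstacle you isolate in your last paragraph is real and is exactly why the paper does not proceed this way.

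The paper does not escape the $\wedge$-families and then normalise inside $B_{\star,B}$; it funnels everything \emph{into} $B_{(\bigcirc,m)\wedge B}$. Concretely: any $B_{\star,B}$ diagram is first brought to width zero, then Fact~\ref{lem: B^(1) mutation steps}(1)(c) pushes it into some $B_{\star\wedge\star'}$ family (for $\star=\bot$ one first mutates at $a_1$ to force sub-case (1)(c)(iv)(D)); parts (3)(b) and (2)(b) then reduce to $B_{(\bigcirc,m)\wedge B}$; there Fact~\ref{lem: B^(1) mutation steps}(2)(c) enlarges the central cycle monotonically until it swallows every vertex but $x_1$, and a single application of (2)(f)(ii)(A) lands in the hypothesis of Lemma~\ref{Lemma shrink cycle} with $\Gamma'$ a $B_2$ graph. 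The point is that inside the $\wedge$-family the invariant ``size of the central cycle'' only increases under (2)(c), so no separate well-foundedness argument is needed.

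Your route instead tries to run the type-$\mathbb{D}$ normalisation on $\Gamma_1$ while remaining a $B_{\star,B}$ diagram. But that normalisation (e.g.\ $D_{\bot}\to D_{(\bigcirc,3)}$ via Fact~\ref{Lemma: D mutation steps}(5)(b), or the cycle-expansion (2)(d)) mutates precisely at $x_i$, and at width zero Fact~\ref{lem: B^(1) mutation steps}(1)(c) then ejects you into a $\wedge$-family; re-escaping via (2)(f) shrinks the cycle again, so the two moves undo each other and nothing is monotone. Your proposed outer induction on $|\Gamma_0|$ cannot rescue this, since mutation preserves $|\Gamma_0|$. The fix is not a secondary invariant on your route but simply to reverse the direction: enter the $\wedge$-family deliberately and stay there, using the cycle size as the monotone quantity.
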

\begin{proof}
Any diagram with underlying undirected graph of Dynkin type $\mathbb{B}^{(1)}$
is a $B_{\bot,B}$ diagram, hence Fact \ref{lem: B^(1) mutation steps}
implies that any diagram of mutation type \emph{$\mathbb{B}^{(1)}$}
is a $B_{\star,B},B_{\star\wedge\star'}$ diagram, where $\star\in\bigl\{(\bigcirc,n)_{n\geq3},\square,\boxslash,\bot\bigr\}$
and $\star'\in\{B,\overleftrightarrow{B}\}$, or a $B_{\square\wedge\boxslash}$
diagram.

To show the other direction, first assume that $|\Gamma_{0}|=4$.
This restricts the possibilities for $\Gamma$ to $B_{\star\wedge\star'}$
diagrams, where $\star\in\bigl\{(\bigcirc,3),\square,\boxslash\bigr\}$
and $\star'\in\{B,\overleftrightarrow{B}\}$, or $B_{\boxslash\wedge\square}$,
or $B_{\bot,B}$ diagrams. Note that the last mentioned is a $B_{3}^{(1)}$
graph already. Thus Fact \ref{lem: B^(1) mutation steps} (3) (b)
(i) (C) implies that any $B_{\boxslash\wedge B}$ diagram is of mutation
type $\mathbb{B}^{(1)}$. Hence we need only deal with $B_{\star\wedge\star'}$
diagrams, where $\star\in\bigl\{(\bigcirc,3),\square,\boxslash\bigr\}$
and $\star'\in\{B,\overleftrightarrow{B}\}$. Now Fact \ref{lem: B^(1) mutation steps}
(2) (e) allows to reduce to the case of $B_{\square\wedge B}$ and
$B_{\boxslash\wedge\square}$ diagrams. But any of these are mutation
equivalent to a $B_{\boxslash\wedge B}$ diagram by (3) (b) (iii)
(A) and (3) (b) (ii) (B) respectively. Since we already proved $B_{\boxslash\wedge B}$
diagrams to be of mutation type $\mathbb{B}^{(1)}$, this finishes
the proof for $|\Gamma_{0}|=4$.

For $|\Gamma_{0}|>4$ we apply the results of Fact \ref{lem: B^(1) mutation steps}
in order to reduce to the case where $\Gamma$ is a $B_{(\bigcirc,m)\wedge B}$
diagram for some $m\leq|\Gamma_{0}|-2$. First note that any $B_{\star,B}$
diagram, where $\star\in\bigl\{(\bigcirc,m)_{m\geq3},\square,\boxslash,\bot\bigr\}$,
is mutation equivalent to a $B_{\star',B}$ diagram, where $\star'\in\bigl\{(\bigcirc,m)_{m\geq3},\square,\boxslash,\bot\bigr\}$,
of width zero. Next Fact \ref{lem: B^(1) mutation steps} (1) (c)
reduces to the case of $B_{\star\wedge\star'}$ diagrams, where $\star\in\bigl\{(\bigcirc,m)_{m\geq3},\square,\boxslash\bigr\}$
and $\star'\in\{B,\overleftrightarrow{B}\}$, or $B_{\boxslash\wedge\square}$
diagrams. In detail, if $\Gamma$ is a $B_{\square,B}$ diagram, apply
(1) (c) (ii). If $\Gamma$ is a $B_{\boxslash,B}$ diagram, mutate
in $a_{1}$ to get a $B_{\square,B}$ diagram of width zero. Finally,
if $\Gamma$ is a $B_{\bot,B}$ diagram, then mutation in $a_{1}$
allows to choose between the subcases of Fact \ref{lem: B^(1) mutation steps}
(1) (c) (iv), hence we may assume to be in subcase (1) (c) (iv) (D).
Now that we have restricted to $B_{\star\wedge\star'}$ diagrams,
where $\star\in\bigl\{(\bigcirc,m)_{m\geq3},\square,\boxslash\bigr\}$
and $\star'\in\{B,\overleftrightarrow{B}\}$, or $B_{\boxslash\wedge\square}$
diagrams, Fact \ref{lem: B^(1) mutation steps} (3) (b) reduces to
the case of $\Gamma$ being a $B_{(\bigcirc,m)\wedge\star}$ diagram,
where $\star\in\{B,\overleftrightarrow{B}\}$. Once more in detail,
use (3) (b) (i) (A) if $\Gamma$ is a $B_{\square\wedge B}$ diagram,
(3) (b) (i) (D) if $\Gamma$ is a $B_{\boxslash\wedge\square}$ diagram
and (3) (b) (iii) (A) to mutate any $B_{\boxslash\wedge B}$ diagram
into a $B_{\square\wedge B}$ diagram. Finally we are dealing with
$B_{(\bigcirc,m)\wedge\star}$ diagrams with $\star\in\{B,\overleftrightarrow{B}\}$
only. Now use Fact \ref{lem: B^(1) mutation steps} (2) (b) to exclude
$\star=\overleftrightarrow{B}$.

Thus we may assume that $\Gamma$ is a $B_{(\bigcirc,m)\wedge B}$
diagram, $m\leq|\Gamma_{0}|-2$. Set $n:=|\Gamma_{0}|$ and apply
Fact \ref{lem: B^(1) mutation steps} (2) (c) until $\Gamma$ is a
$B_{(\bigcirc,n-1)\wedge B}$ diagram. Using (2) (f) (ii) (A) of the
same Fact, we are in the situation of Lemma \ref{Lemma shrink cycle}
with $\Gamma'$ a diagram with underlying undirected graph $B_{2}$,
having $y$ as an end vertex. Thus $\Gamma$ is of mutation type $\mathbb{B}^{(1)}$. 
\end{proof}

\subsection{Type $\mathbb{C}^{(1)}$}

\begin{fact}
\label{lem: C^(1) mutation steps}\emph{The following is an easy exercise
in diagram mutation:} 
\end{fact}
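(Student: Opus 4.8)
The proof will follow the template of Facts~\ref{Lemma: A mutation steps}--\ref{lem: B^(1) mutation steps}. Since $\mu_{k}$ changes only the edges incident to $k$ together with the edges lying on a two-edge oriented path through $k$, and since the new weight $c'$ of such an edge is determined by $a,b,c$ and the orientations of the two triangles through $k$, the statement reduces to enumerating the finitely many local configurations that can occur at a vertex $k$ of a $C_{B,B}$ diagram and of a $C_{B\wedge B}$ diagram, and reading off in each case the type of $\mu_{k}(\Gamma)$ from the weighted mutation rule. I would organise this exactly as in Fact~\ref{lem: B^(1) mutation steps}, treating the $C_{B,B}$ case first and the $C_{B\wedge B}$ case second.

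For a $C_{B,B}$ diagram $\bar{\Gamma}$ glued from two $B$ diagrams $\Gamma,\Gamma'$ along $y=y'$, the basic structural remark is that $y$ is a cut vertex, so every cycle of $\bar{\Gamma}$ lies entirely inside the $\Gamma$-part or entirely inside the $\Gamma'$-part; hence $\bar{\Gamma}$ is cyclically oriented precisely when both of the $B$ diagrams are. Consequently, if $k$ lies in one of the $B$-parts and $k\neq y$, a neighbourhood of $k$ in $\bar{\Gamma}$ is literally a neighbourhood in a $B$ diagram and Fact~\ref{Lemma: B mutations steps} applies verbatim: $\mu_{k}(\bar{\Gamma})$ is again a gluing of two $B$ diagrams along a cut vertex, and the shortest path $\omega_{\bar{\Gamma}}$ from $x$ to $x'$ changes length by at most one, so $|w(\mu_{k}(\bar{\Gamma}))-w(\bar{\Gamma})|\leq 1$, and if $w(\bar{\Gamma})=0$ the path cannot shorten, whence $w(\bar{\Gamma})\leq w(\mu_{k}(\bar{\Gamma}))\leq w(\bar{\Gamma})+1$. (As noted in Remark~\ref{rem:only class of C_B_B}, the representing pair of $B$ diagrams may change, which is irrelevant.) The \emph{genuinely new case} is $w(\bar{\Gamma})=0$ with $k=y$: then $y$ is a common child of the two apices $x$ and $x'$, so it has degree four, with two weight-two edges (to $x$ and $x'$) and two weight-one edges (to the second child of $x$ and to the second child of $x'$), while $x$ and $x'$ are non-adjacent. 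Mutating at $y$ sends the two old weight-two edges from $x$, respectively $x'$, to their second children to weight zero, i.e.\ deletes them, and creates the new edges among the four neighbours of $y$ prescribed by the two-edge paths through $y$. Depending on the local orientation at $y$, the outcome is either a $C_{B\wedge B}$ diagram -- in the case where a weight-four edge $x$--$x'$ and a weight-one edge between the two second children appear, so that those become the $\nabla$-children of $y=\bullet$ -- or again a $C_{B,B}$ diagram of width zero; which sub-case occurs is read off from the sign rule $\pm\sqrt{c}\pm\sqrt{c'}=\sqrt{ab}$.

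For a $C_{B\wedge B}$ diagram, the only vertices whose local picture is not that of a $B$ diagram are $\bullet,x_{1},x_{2}$, so for $k\notin\{x_{1},x_{2},\bullet\}$ Fact~\ref{Lemma: B mutations steps} again gives that $\mu_{k}(\Gamma)$ is a $C_{B\wedge B}$ diagram. Mutation at $x_{1}$ or $x_{2}$ permutes the weighted triangle $\{x_{1},x_{2},\bullet\}$ into another copy of itself -- the cyclic-orientation hypothesis selecting the sign combination that keeps all weights in $\{1,2,4\}$ -- so $\mu_{k}(\Gamma)$ is again a $C_{B\wedge B}$ diagram. Mutation at $\bullet$ is the inverse of the new case above: the identity $+\sqrt{4}-\sqrt{c'}=\sqrt{2\cdot 2}$ forces $c'=0$, the weight-four edge disappears, the two $\nabla$-children of $\bullet$ become new apices, and $\mu_{\bullet}(\Gamma)$ is a $C_{B,B}$ diagram of width zero. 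Involutivity of $\mu_{k}$ is then automatically consistent with all of these transitions.

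The main obstacle is the bookkeeping at the distinguished vertices $y$ and $\bullet,x_{1},x_{2}$: one has to verify that in each case the sign combination dictated by the orientations produces the claimed weight, with $(a,b,c)$ ranging over the few possibilities $(2,2,0),(2,2,4),(2,4,2),(2,1,0),(2,1,2),(1,1,0)$ -- in particular that only weights in $\{0,1,2,4\}$ ever occur, no irrational or spurious integer weight appearing -- and that the auxiliary weight-one edges created among the neighbours of $y$, respectively of $\bullet$, assemble into the start of a $\nabla$ graph while the old weight-two edges are deleted. The cyclic-orientability conditions in the definitions of $C_{B,B}$ and $C_{B\wedge B}$ are exactly what fixes the signs in $\pm\sqrt{c}\pm\sqrt{c'}=\sqrt{ab}$ and separates the sub-cases; everything else is a direct transcription of Fact~\ref{Lemma: B mutations steps}.
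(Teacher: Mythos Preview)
Your approach is exactly what the paper has in mind: it gives no proof beyond ``easy exercise in diagram mutation'', so the intended argument is precisely the local case analysis you outline, reducing to Fact~\ref{Lemma: B mutations steps} away from the distinguished vertices and checking the few special configurations by hand. Two small corrections to your bookkeeping, neither of which affects the overall structure:

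In part (2), when you mutate at $\bullet$ in a $C_{B\wedge B}$ diagram, it is $x_{1}$ and $x_{2}$ that become the apices $x,x'$ of the two $B$ sub-diagrams in the resulting $C_{B,B}$ diagram, \emph{not} the former $\nabla$-children of $\bullet$. The $\nabla$-children instead acquire weight-$2$ edges to $x_{1}$ respectively $x_{2}$ (while their mutual weight-$1$ edge is deleted by the same mechanism you use for the weight-$4$ edge), so they become the ``second children'' of the new apices. This is forced by involutivity of $\mu_{\bullet}$ together with your own forward computation in part (1).

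In part (1)(b) you describe the gluing vertex $y=\bullet$ as having degree four. That is only the generic situation; $\bullet$ may have degree $2$ or $3$ when one or both of the second children of $x,x'$ are absent from the finite sub-graph. The paper's criterion is simply that $\{x,\bullet,x'\}$ carries \emph{linear} orientation --- exactly the case in which the two-edge oriented path $x\to\bullet\to x'$ (or its reverse) exists and creates the weight-$4$ edge --- and this is independent of $\mbox{deg}(\bullet)$.
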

\begin{enumerate}
\item Let $\Gamma$ be a $C_{B,B}$ diagram.

\begin{enumerate}
\item If $w(\Gamma)>0$, then $\mu_{k}(\Gamma)$ is a $C_{B,B}$ diagram
and \emph{$|w(\mu_{k}(\Gamma))-w(\Gamma)|\leq1$ .} 
\item Else $w(\Gamma)=0$ and $x,x'$ have a common neighbour $\bullet$.
For $k$ any vertex, $\mu_{k}(\Gamma)$ is a

\begin{enumerate}
\item $C_{B\wedge B}$ diagram if $k=\bullet$ and $\{x,\bullet,x'\}$ carries
linear orientation. 
\item $C_{B,B}$ diagram else. 
\end{enumerate}
\end{enumerate}
\item Let $\Gamma$ be a $C_{B\wedge B}$ diagram, $k$ any vertex of $\Gamma$.
Then $\mu_{k}(\Gamma)$ is a

\begin{enumerate}
\item $C_{B\wedge B}$ diagram if $k\neq\bullet$. 
\item $C_{B,B}$ diagram of width zero else. 
\end{enumerate}
\end{enumerate}
\begin{thm*}
\textbf{\emph{\ref{Theorem (mutation classes)}}} \emph{(5)} Let $\Gamma$
be a connected diagram. Then $\Gamma$ is of mutation type $\mathbb{C}^{(1)}$
if and only if it is a $C_{B,B}$ or $C_{B\wedge B}$ diagram. 
\end{thm*}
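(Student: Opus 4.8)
The plan is to follow the template of the earlier proofs, in particular the short argument for type $\mathbb{B}$. For the ``only if'' direction one observes that a diagram whose underlying undirected graph is $C_n^{(1)}$ --- a path $\circ\overset{2}{-}\circ-\cdots-\circ\overset{2}{-}\circ$ on $n+1$ vertices --- is itself a $C_{B,B}$ diagram: split the path at an interior vertex into two linearly oriented $B_m$ diagrams and view $C_n^{(1)}$ as their gluing along that vertex, of width $n-2$. Hence Fact \ref{lem: C^(1) mutation steps} shows that every diagram mutation equivalent to $C_n^{(1)}$ is a $C_{B,B}$ or a $C_{B\wedge B}$ diagram.

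For the ``if'' direction let $\Gamma$ be a $C_{B,B}$ or $C_{B\wedge B}$ diagram; one must mutate it to a diagram with underlying graph $C^{(1)}_{|\Gamma_0|-1}$. By Fact \ref{lem: C^(1) mutation steps}(2)(b) mutating a $C_{B\wedge B}$ diagram at $\bullet$ gives a $C_{B,B}$ diagram, so we may assume $\Gamma$ is a $C_{B,B}$ diagram, obtained by gluing two $B$ diagrams $\Gamma_1\ni x$ and $\Gamma_2\ni x'$ at a vertex $y$. By Fact \ref{lem: C^(1) mutation steps} the whole mutation class of $\Gamma$ stays inside the two families, so it suffices to reach a $C_{B,B}$ diagram of maximal width $|\Gamma_0|-3$; the shortest $x$--$x'$ path of such a diagram runs through all of its vertices, and since $C_{B,B}$ and $C_{B\wedge B}$ diagrams contain no cycles other than triangles this path admits no chord, so the diagram is exactly the path $C^{(1)}_{|\Gamma_0|-1}$ (its interior edges carrying weight one, as the weight-two edges of a $B$ diagram touch only the distinguished vertex). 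Concretely, each $B$ half is of mutation type $\mathbb{B}$ by Theorem \ref{Theorem (mutation classes)}(2), and running the induction of that proof --- always removing a vertex distinct from $y$ --- straightens each half into a path; the two paths then meet at $y$ and $\Gamma$ becomes $C^{(1)}_{|\Gamma_0|-1}$. The base case $|\Gamma_0|=3$, where $\Gamma$ equals $\circ\overset{2}{-}\circ\overset{2}{-}\circ$ or the weight-$(4,2,2)$ triangle mutating to it, is immediate.

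The routine part of all this is the local case analysis behind Fact \ref{lem: C^(1) mutation steps}. The genuinely delicate point --- and where more care is needed than for type $\mathbb{B}$ --- is the bookkeeping at the gluing vertex $y$: since $\Gamma_1$ and $\Gamma_2$ share only $y$, a mutation at $y$, or even at a neighbour of $y$, interacts with both halves at once and may create edges between $\Gamma_1\setminus y$ and $\Gamma_2\setminus y$; one has to show that each half can be straightened by mutations leaving $y$ at the far endpoint of the resulting path --- for instance by repeatedly deleting a non-gluing leaf of the current half and verifying, when no such leaf exists, that the half is already the desired path terminating in $y$ --- and to keep track of how $y$ and the widths evolve, so that each re-added vertex is absorbed into the growing path by a single further mutation. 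This controlled straightening, rather than any one local computation, is the main obstacle.
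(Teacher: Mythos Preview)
Your ``only if'' direction is correct and matches the paper: a $C_n^{(1)}$ graph is visibly a $C_{B,B}$ diagram, so Fact~\ref{lem: C^(1) mutation steps} propagates membership through the mutation class.

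For the ``if'' direction you take a genuinely different route from the paper, and there is a real gap. The paper does \emph{not} split $\Gamma$ into two $B$ halves; it runs the \emph{same} global induction on $|\Gamma_0|$ as for types~$\mathbb{A}$ and~$\mathbb{B}$: pick any $z\in\Gamma_0\setminus\{x,x'\}$ whose removal keeps $\Gamma$ connected (so $\Gamma\setminus\{z\}$ is again a $C_{B,B}$ or $C_{B\wedge B}$ diagram), mutate that smaller diagram to $C_{n-1}^{(1)}$ by induction, then reattach $z$ and use invariance plus one further mutation at $z$. The gluing vertex never plays a special role.

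Your approach instead fixes a particular splitting $\Gamma=\Gamma_1\cup_y\Gamma_2$ and tries to straighten each $B$ half so that $y$ lands at the non-root end. You correctly note that mutations at vertices of $\Gamma_i\setminus\{y\}$ do not touch the other half, so the problem reduces to: given a $B$ diagram with a marked vertex $y\neq x$, mutate it to a $B_m$ path with $y$ at the far end, \emph{without ever mutating at $y$}. You flag this as ``the main obstacle'' but do not prove it, and your sketch (delete a non-$y$ leaf, induct, reattach) does not work. Concretely, take the $B_3$ path $x\overset{2}{-}y-v$ with $y$ in the middle: the only vertices available are the leaves $x$ and $v$, and mutating at either merely reverses an orientation---the underlying graph never changes, so $y$ cannot be moved to an endpoint. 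Thus for this splitting your half-by-half straightening is impossible without mutating at $y$, and once you mutate at $y$ the two halves interact and you are back to analysing the whole diagram. (One could try to salvage the idea by first re-choosing the splitting---Remark~\ref{rem:only class of C_B_B} says there are many---but that is a separate argument you have not supplied.)

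In short: the paper's global induction sidesteps exactly the difficulty you have isolated; your alternative decomposition is natural but, as written, incomplete at the point you yourself identify as delicate.
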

\begin{proof}
Since any diagram with underlying undirected graph of Dynkin type
$\mathbb{C}$ is itself an $C_{B,B}$ diagram, one direction follows
from Fact \ref{lem: C^(1) mutation steps}. For the other direction,
we proceed by induction on $|\Gamma_{0}|$.

For $|\Gamma_{0}|=3$ note that Fact \ref{lem: C^(1) mutation steps}
(2) (b) reduces to the case where $\Gamma$ is a $C_{B,B}$ diagram,
which has underlying graph $C_{2}^{(1)}$ then. Assume that $\Gamma$
has $n+1$ vertices and that the statement is true for $n$. If $\Gamma$
does not have underlying graph $C_{n}^{(1)}$ already, let $y\in\Gamma_{0}\setminus\{x,x'\}$
such that the induced subgraph on $\Gamma_{0}\setminus\{y\}$ is connected.
The rest is analogous to the proof for types $\mathbb{A}$ and $\mathbb{B}$. 
\end{proof}

\subsection{Type $\mathbb{D}^{(1)}$}

Again, the only difficulty for the next statement lies in keeping
track of notations. However, we strongly recommend the reader to work
through the discussion for type $\mathbb{B}^{(1)}$ (Fact \ref{lem: B^(1) mutation steps})
first, as this already conveys the right understanding for the situation
treated in the first case of the following

\begin{fact}
\label{lem: D^(1) mutation steps}\emph{The following is a straightforward
exercise in diagram mutation (cf. subsection \ref{sub:The--diagrams for D^(1)}
for notations).}  
\end{fact}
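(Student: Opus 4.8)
The plan is to follow verbatim the strategy behind Facts~\ref{Lemma: D mutation steps}, \ref{lem: B^(1) mutation steps} and \ref{lem: C^(1) mutation steps}: since $\mu_k$ alters only the edges incident to $k$ together with the edges joining two neighbours of $k$, the family to which $\mu_k(\Gamma)$ belongs is completely determined by the induced sub-diagram of $\Gamma$ on $\{k\}$ together with the neighbours of $k$, and by the way that local piece is embedded in the relevant infinite graph. First I would record that each of the graphs $D_{\star,\star'}$, $D_{\star\vee\star'}$, $D_{\star\wedge\star'}$ and $D_{\boxtimes}$ is, outside a bounded ``core'', nothing but a union of $\nabla$ graphs hanging off a finite central configuration; hence for every vertex $k$ lying outside the core the computation is literally the $\nabla$ graph computation already carried out in Facts~\ref{Lemma: D mutation steps}, \ref{lem: B^(1) mutation steps} and \ref{lem: C^(1) mutation steps}, so that $\mu_k(\Gamma)$ stays in the same family with the same parameters. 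This disposes of the bulk of the vertices and reduces the whole statement to the finitely many local configurations around the core.

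It remains to treat $k$ inside the core. For a $D_{\star,\star'}$ diagram, which by Definition~\ref{def:D_star_star'} is a $D_{\star}$ diagram $\Gamma_1$ and a $D_{\star'}$ diagram $\Gamma_2$ identified along one vertex and whose width measures the distance between the $x_i$ of $\Gamma_1$ and the $x'_j$ of $\Gamma_2$, I would split into the cases: (i) $k$ lies strictly inside one side, away from the identified vertex, so that $\mu_k$ acts on that side exactly as in Fact~\ref{Lemma: D mutation steps}, turning $D_{\star}$ into a $D_{\star''}$ diagram for some $\star''$, leaving the other side untouched and changing the width by at most one, in complete analogy with Fact~\ref{lem: B^(1) mutation steps} (1)(a),(b); and (ii) $k$ is, or is adjacent to, the identified vertex, or $w(\Gamma)=0$ so that the $x_i$ and $x'_j$ configurations interact --- this is the place where the ``joined'' families $D_{\star\vee\star'}$ and $D_{\star\wedge\star'}$, and in the case $(\star,\star')=((\bigcirc,3),(\bigcirc,3))$ the family $D_{\boxtimes}$, are produced, just as Fact~\ref{lem: B^(1) mutation steps} (1)(c) produces the $B_{\star\wedge B}$ families. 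The $D_{\star\vee\star'}$ and $D_{\star\wedge\star'}$ diagrams are handled the same way: mutation away from the distinguished vertex $\bullet$ (or $\bullet_1,\bullet_2$) reduces to the $D_{\star}$ analysis of Fact~\ref{Lemma: D mutation steps} on each arm, while mutation at $\bullet$ toggles between the $\vee$ and $\wedge$ versions or yields a $D_{\star,\star'}$ diagram of width zero, parallel to Fact~\ref{lem: C^(1) mutation steps} (2). The decoration $\overleftrightarrow{\boxslash}$ is the $\mathbb{D}^{(1)}$ analogue of $\overleftrightarrow{B}$, so mutation at $\bullet_1$ or $\bullet_2$ flips the orientation of the distinguished edge $e$ and swaps $\boxslash$ with $\overleftrightarrow{\boxslash}$ exactly as in Fact~\ref{lem: B^(1) mutation steps} (2)(b). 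Finally $D_{\boxtimes}$ is a single small cyclic configuration with $\nabla$ arms, so mutation at $x_1$ or at an $a_i$ either keeps it a $D_{\boxtimes}$ diagram or, when the cycle conditions force it, produces a $D_{(\bigcirc,3)\vee(\bigcirc,3)}$ diagram; here one only has to check that $\mu_k$ respects the presence or absence of the induced sub-graph isolated in the last Remark of Section~\ref{sec:The--diagrams}, namely the picture there marked ii), which already distinguishes the two families.

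Concretely I would present the statement and its proof as a table of local pictures, one row per pair (family, type of $k$), displaying the induced sub-diagram before and after $\mu_k$ with the Fomin--Zelevinsky sign rule applied, in the style of the displayed arrays in the proofs of Facts~\ref{Lemma: A mutation steps} and \ref{Lemma: B mutations steps}; since the underlying undirected graphs are trees away from the named core cycles, cyclic orientability only needs to be checked on those cores, and most rows reduce verbatim to rows already appearing in Facts~\ref{Lemma: D mutation steps}, \ref{lem: B^(1) mutation steps} and \ref{lem: C^(1) mutation steps}. The main obstacle is purely bookkeeping: $\star$ and $\star'$ each range over the five symbols $(\bigcirc,n)_{n\geq3},\square,\boxslash,\bot,\overleftrightarrow{\boxslash}$, the corner $(\star,\star')=((\bigcirc,n),(\bigcirc,m))$ behaves differently --- its width is defined without the ``minus one'', and the $D_{\boxtimes}$ exception lives there --- and one must track consistently the three data (which named core a vertex belongs to, the degree of a vertex when it lies outside a core, and the width) to decide which of $D_{\star,\star'}$, $D_{\star\vee\star'}$, $D_{\star\wedge\star'}$, $D_{\boxtimes}$ the result lands in. None of the individual computations is hard; the delicate points are making the case list exhaustive and checking that the degenerate low-vertex cases ($|\Gamma_0|=5$ or $6$) are compatible with the disjointness claim of that Remark, so that no diagram is counted in two families.
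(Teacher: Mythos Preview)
Your overall plan is exactly what the paper intends: the paper gives no proof of this Fact beyond calling it ``a straightforward exercise in diagram mutation'' and remarking that ``the only difficulty \ldots\ lies in keeping track of notations'', so a local case analysis in the style of Facts~\ref{Lemma: A mutation steps}--\ref{lem: C^(1) mutation steps}, with the $\nabla$-arm vertices handled uniformly and only the finitely many core configurations treated explicitly, is precisely the expected argument.

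That said, two of your predicted outcomes are wrong and would make your table disagree with the enumerated statement itself. For $D_{\boxtimes}$ you write that mutation at an $a_i$ or at $x_1$ ``either keeps it a $D_{\boxtimes}$ diagram or \ldots\ produces a $D_{(\bigcirc,3)\vee(\bigcirc,3)}$ diagram''; in fact mutation at any $a_i$ yields a $D_{\square\wedge\square}$ diagram, and mutation at $x_1$ yields $D_{\boxslash\vee\boxslash}$, $D_{\boxslash\vee\bot}$ or $D_{\bot\vee\bot}$ according to $\deg(x_1)$ --- never $D_{(\bigcirc,3)\vee(\bigcirc,3)}$ and never $D_{\boxtimes}$ again. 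Likewise, the $\overleftrightarrow{\boxslash}$ decoration is \emph{not} toggled by mutation at $\bullet_1$ or $\bullet_2$: for $\star=\overleftrightarrow{\boxslash}$ mutation at $\bullet_j$ stays in $D_{(\bigcirc,n)\wedge\overleftrightarrow{\boxslash}}$, while for $\star=\boxslash$ mutation at $\bullet_j$ drops to a $D_{(\bigcirc,n-1)\vee\star''}$ diagram; the passage between $\boxslash$ and $\overleftrightarrow{\boxslash}$ actually happens through $\square$ via mutation at $a'_1$ versus $a'_2$. These are bookkeeping slips rather than a flaw in the method, but since the content of the Fact \emph{is} the bookkeeping, you should redo those rows by hand before writing up the table.
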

\begin{enumerate}
\item Let $\Gamma$ be a $D_{\star,\star'}$ diagram, where $\star,\star'\in\bigl\{(\bigcirc,n)_{n\geq3},\square,\boxslash,\bot\bigr\}$.
Recall that the endpoints of $\omega_{\Gamma}$ are labeled $x_{i}$
for some $i$ and $x'_{j}$ for some $j$. Let $\Gamma_{x_{i}}$ denote
the connected component of the subgraph of $\Gamma$ induced on $\Gamma_{0}\setminus\star'_{0}$
which is uniquely determined by containing $x_{i}$. Then $\Gamma_{x_{i}}$
is a $D_{\star}$ diagram and a full connected subdiagram of $\Gamma$.
For $k$ any vertex of $\Gamma$ we assume without loss of generality
that $k\in(\Gamma_{x_{i}})_{0}$. Then $\mu_{k}(\Gamma_{x_{i}})$
is a $D_{\star''}$ diagram for some $\star''\in\bigl\{(\bigcirc,n)_{n\geq n},\square,\boxslash,\bot\bigr\}$
by Fact \ref{Lemma: D mutation steps} and the following is true:

\begin{enumerate}
\item if $w(\Gamma)>0$, then $\mu_{k}(\Gamma)$ is a $D_{\star'',\star'}$
diagram and \emph{$|w(\mu_{k}(\Gamma))-w(\Gamma)|\leq1$.} 
\item if $w(\Gamma)=0$ and $k\neq x_{i}$, then $\mu_{k}(\Gamma)$ is a
$D_{\star'',\star'}$ diagram and \emph{$w(\Gamma)\leq w(\mu_{k}(\Gamma))\leq w(\Gamma)+1$.} 
\item otherwise $w(\Gamma)=0$ and $k=x_{i}$. Assume that $\star$ equals

\begin{enumerate}
\item $(\bigcirc,n)$. Then $\mu_{k}(\Gamma)$ is a $D_{(\bigcirc,n+1)\vee(\bigcirc,m+1)}$
diagram if $\star'=(\bigcirc,m)$ and a $D_{(\bigcirc,n+1)\vee\star'}$
diagram else. 
\item $\square$. Then $\mu_{k}(\Gamma)$ is a $D_{(\bigcirc,3),(\bigcirc,n)}$
diagram of width zero if $\star'=(\bigcirc,n)$ and a $D_{(\bigcirc,3)\vee\star'}$
diagram else. 
\item $\boxslash$. Then $\mu_{k}(\Gamma)$ is a

\begin{enumerate}
\item $D_{\boxslash\vee\star'}$ diagram if $\{x',x{}_{1},x{}_{2}\}$ carries
non-linear orientation. 
\item $D_{\boxslash,\star'}$ diagram of width zero if $\{x',x{}_{1},x{}_{2}\}$
carries linear orientation and $\mbox{deg}(x)=5$. 
\item $D_{\bot,\star'}$ diagram of width zero else 
\end{enumerate}
\item $\bot$. Then $\mu_{k}(\Gamma)$ is a

\begin{enumerate}
\item $D_{\boxslash\vee\star'}$ diagram if both $\{x',x,a_{i}\}_{1\leq i\leq2}$
carry linear orientation. 
\item $D_{\boxslash,\star'}$ diagram of width zero if both $\{x'_{},x,a_{i}\}_{1\leq i\leq2}$
carry non-linear orientation and $\mbox{deg}(x)=5$. 
\item $D_{\bot,\star'}$ diagram of width zero if both $\{x'_{},x,a_{i}\}_{1\leq i\leq2}$
carry non-linear orientation and $\mbox{deg}(x)=4$. 
\item $D_{(\bigcirc,3)\vee\star'}$ diagram else.\\

\end{enumerate}
\end{enumerate}
\end{enumerate}
\item Let $\Gamma$ be a $D_{\star\vee\star'}$ diagram, where $\star,\star'\in\bigl\{(\bigcirc,n)_{n\geq n},\square,\boxslash,\bot\bigr\}$,
$k$ a vertex of $\Gamma$.

\begin{enumerate}
\item If $k\neq\bullet$ then for

\begin{enumerate}
\item $(\star\vee\star')\neq((\bigcirc,n)\vee(\bigcirc,m))$ we may assume
without loss of generality that $k\notin\star'_{0}$. Denote by $\Gamma_{1}$
the connected component of the subgraph of $\Gamma$ induced on $\Gamma_{0}\setminus\star_{0}'$
containing $k$, so that $\Gamma_{1}$ is a $D_{\star}$ diagram.
Then $\mu_{k}(\Gamma_{1})$ is a $D_{\star''}$ diagram for some $\star''\in\bigl\{(\bigcirc,n)_{n\geq n},\square,\boxslash,\bot\bigr\}$
by Fact \ref{Lemma: D mutation steps}. Moreover, $\mu_{k}(\Gamma)$
is either one of the following

\begin{enumerate}
\item a $D_{\star'',\star'}$ diagram of width zero. 
\item or a $D_{\star''\vee\star'}$ diagram. 
\end{enumerate}
\item $(\star\vee\star')=((\bigcirc,n)\vee(\bigcirc,m))$ and $k\in\{a_{1},a_{3}\}$,
$\mu_{k}(\Gamma)$ is a $D_{(\bigcirc,m+1)\wedge\square}$ diagram
if $n=3$ and a $D_{(\bigcirc,n-1)\vee(\bigcirc,m+1)}$ diagram else. 
\end{enumerate}
\item Else $k=\bullet$. Assume that $(\star\vee\star')$ equals

\begin{enumerate}
\item $((\bigcirc,3)\vee(\bigcirc,3))$. Then $\mu_{k}(\Gamma)$ is a $D_{\square\vee\square}$
diagram if both $x_{3},x'_{3}\in\Gamma_{0}$ and a $D_{\square\vee\bot}$
diagram else. 
\item $((\bigcirc,n)\vee(\bigcirc,3))$ with $n\neq3$. Then $\mu_{k}(\Gamma)$
is a $D_{(\bigcirc,n-1)\vee\square}$ if $x'_{3}\in\Gamma_{0}$ and
a $D_{(\bigcirc,n-1)\vee\bot}$ diagram else. 
\item $((\bigcirc,n)\vee(\bigcirc,m))$ with $(n,m)\neq(3,3)$. Then $\mu_{k}(\Gamma)$
is a \linebreak $D_{(\bigcirc,n-1),(\bigcirc,m-1)}$ diagram of width
zero. 
\item $(\bigcirc,n)\vee\boxslash$. Then $\mu_{k}(\Gamma)$ is a $D_{(\bigcirc,n+1)\wedge\boxslash}$
diagram. 
\item $(\bigcirc,n)\vee\square$. Then $\mu_{k}(\Gamma)$ is a $D_{(\bigcirc,n+1)\wedge(\bigcirc,3)}$
diagram. 
\item $(\bigcirc,n)\vee\bot$. Then $\mu_{k}(\Gamma)$ is a $D_{(\bigcirc,n+1)\wedge\boxslash}$
diagram if $\{a'_{1},a'_{2},\bullet\}$ carries non-linear orientation
and a $D_{(\bigcirc,n+1)\wedge(\bigcirc,3)}$ diagram else. 
\item $(\boxslash\vee\boxslash)$. Then assuming without loss of generality
that $x_{2}=\bullet=x'_{1}$, $\mu_{k}(\Gamma)$ is a $D_{\boxslash\vee\boxslash}$
diagram if $\{x_{1},\bullet,x'_{2}\}$ carries non-linear orientation
and a $D_{\boxtimes}$ diagram else. 
\item $(\boxslash\vee\square)$. Then $\mu_{k}(\Gamma)$ is a $D_{(\bigcirc,3)\wedge\boxslash}$
diagram. 
\item $(\square\vee\square)$. Then $\mu_{k}(\Gamma)$ is a $D_{(\bigcirc,3)\vee(\bigcirc,3)}$
diagram. 
\item $(\boxslash\vee\bot)$. Then $\mu_{k}(\Gamma)$ is a $D_{\boxslash\vee\bot}$
diagram if one of $\mbox{deg}{}^{\pm}(\bullet)$ equals four. Else
one of $\mbox{\emph{\emph{deg}}}{}^{\pm}(\bullet)$ equals three and
$\mu_{k}$ yields a $D_{\boxtimes}$ diagram if $\{a'_{1},a'_{2},\bullet\}$
carries non-linear orientation and $D_{(\bigcirc,3)\wedge\boxslash}$
diagram else. 
\item $(\square\vee\bot)$. Then $\mu_{k}(\Gamma)$ is a $D_{(\bigcirc,3)\wedge\boxslash}$
diagram if $\{a'_{1},a'_{2},\bullet\}$ carries non-linear orientation
and $D_{(\bigcirc,3)\vee(\bigcirc,3)}$ diagram else. 
\item $(\bot\vee\bot)$. Then $\mu_{k}(\Gamma)$ is a $D_{\bot\vee\bot}$
diagram if one of $\mbox{deg}^{\pm}(\bullet)$ equals four, a $D_{(\bigcirc,3)\wedge\boxslash}$
diagram if one of $\mbox{deg}{}^{\pm}(\bullet)$ equals three and
a $D_{\boxtimes}$ diagram else.\\

\end{enumerate}
\end{enumerate}
\item Let $\Gamma$ be a $D_{(\bigcirc,n)\wedge\star}$ diagram, where $\star\in\{\square,\boxslash,\overleftrightarrow{\boxslash}\}$.

\begin{enumerate}
\item If $k\notin\left((\bigcirc,n)\wedge\star\right)_{0}$ then $\mu_{k}(\Gamma)$
is a $D_{(\bigcirc,n)\wedge\star}$ diagram. 
\item If $k=x_{i}$ for some $i$ then $\mu_{k}(\Gamma)$ is a $D_{(\bigcirc,n+1)\wedge\star'}$
diagram. 
\item If $k\in\left((\bigcirc,n)\wedge\star\right)_{0}\setminus\{\bullet_{j},a'_{j}\}_{1\leq j\leq2}$
and $n>3$, then $\mu_{k}(\Gamma)$ is a $D_{(\bigcirc,n-1)\wedge\star}$
diagram. 
\item If $k\in\left((\bigcirc,3)\wedge\star\right)_{0}\setminus\{\bullet_{j},a'_{j}\}_{1\leq j\leq2}$,
then $\mu_{k}(\Gamma)$ is a $D_{\boxslash\wedge\boxslash}$ diagram
if $\star=\overleftrightarrow{\boxslash}$. Else $\star=\boxslash$
and $\mu_{k}$ yields a $D_{\square\wedge\square}$ diagram. 
\item Else $k\in\{\bullet_{j},a'_{j}\}_{1\leq j\leq2}$. Assume $\star$
equals

\begin{enumerate}
\item $\overleftrightarrow{\boxslash}$. If $k\in\{\bullet_{1},\bullet_{2}\}$
then $\mu_{k}(\Gamma)$ is a $D_{(\bigcirc,n)\wedge\overleftrightarrow{\boxslash}}$
diagram. Else $k\in\{a'_{1},a'_{2}\}$ and $\mu_{k}$ yields a $D_{(\bigcirc,n+1)\wedge\square}$
diagram. 
\item $\boxslash$ . If $k\in\{a'_{1},a'_{2}\}$ then $\mu_{k}(\Gamma)$
is a $D_{(\bigcirc,n+1)\wedge\square}$ diagram. Else $k\in\{\bullet_{1},\bullet_{2}\}$
and

\begin{enumerate}
\item for $n>3$, $\mu_{k}(\Gamma)$ is a $D_{(\bigcirc,n-1)\vee\boxslash}$
diagram of width zero if $\mbox{deg}(k)=5$ and a $D_{(\bigcirc,n-1)\vee\bot}$
of width zero else. 
\item for $n=3$, $\mu_{k}(\Gamma)$ is a $D_{\boxslash\wedge\square}$
diagram if $\{x_{1},x_{2},a'_{1}\}\subset\Gamma$. Else $\mid\{x_{1},x_{2},a'_{1}\}\cap\Gamma\mid=2$
and $\mu_{k}(\Gamma)$ is a\\
 $(\alpha)$ $D_{\boxslash\vee\bot}$ of width zero if $\mbox{deg}(k)=5$\emph{.}
\\
 $(\beta)$ $D_{\square\vee\bot}$ of width zero else. 
\end{enumerate}
\item $\square$.

\begin{enumerate}
\item If $n>4$ then $\mu_{k}(\Gamma)$ is a \\
 $(\alpha)$ $D_{(\bigcirc,n-1)\vee(\bigcirc,3)}$ diagram if $k\in\{\bullet_{1},\bullet_{2}\}$.
Moreover if $\mbox{deg}(k)=3$ then $x'_{3}\notin(\mu_{k}(\Gamma))_{0}$
.\\
 $(\beta)$ Else $k\in\{a'_{1},a'_{2}\}$ and $\mu_{a'_{1}}(\Gamma)$
is a $D_{(\bigcirc,n-1)\wedge\boxslash}$ diagram while $\mu_{a'_{2}}(\Gamma)$
is a $D_{(\bigcirc,n-1)\wedge\overleftrightarrow{\boxslash}}$ diagram. 
\item If $n=3$ then $\mu_{k}(\Gamma)$ is a \\
 $(\alpha)$ $D_{(\bigcirc,3)\wedge\square}$ diagram if $k\in\{\bullet_{1},\bullet_{2}\}$.\\
 $(\beta)$ Else $k\in\{a'_{1},a'_{2}\}$ and $\mu_{a'_{1}}(\Gamma)$
is a $D_{\square\wedge\square}$ diagram while $\mu_{a'_{2}}(\Gamma)$
is a $D_{\boxslash\wedge\boxslash}$ diagram.\\

\end{enumerate}
\end{enumerate}
\end{enumerate}
\item Let $\Gamma$ be a $D_{\star\wedge\star}$ diagram, where $\star\in\{\square,\boxslash\}$.

\begin{enumerate}
\item If $k\notin\left(\star\wedge\star\right)_{0}$ then $\mu_{k}(\Gamma)$
is a $D_{\star\wedge\star}$ diagram. 
\item Else $k\in\left(\star\wedge\star\right)_{0}$.

\begin{enumerate}
\item If $k=\bullet$ then $\mu_{k}(\Gamma)$ is a $D_{(\bigcirc,3)\wedge\boxslash}$
diagram if $\star=\square$ and $D_{(\bigcirc,3)\wedge\overleftrightarrow{\boxslash}}$
diagram else. 
\item If $k\in\{x_{1},x_{2}\}$ then $\mu_{k}(\Gamma)$ is a $D_{\boxtimes}$
diagram if $\star=\square$ and a $D_{\boxslash\wedge\boxslash}$
diagram else. 
\item If $k\in\{a_{1},a_{2}\}$ then $\mu_{k}(\Gamma)$ is a $D_{(\bigcirc,3)\wedge\square}$
diagram.\\

\end{enumerate}
\end{enumerate}
\item Let $\Gamma$ be a $D_{\boxtimes}$ diagram.

\begin{enumerate}
\item If $k\notin\boxtimes_{0}$ then $\mu_{k}(\Gamma)$ is a $D_{\boxtimes}$
diagram.\\
 \\
 Else $k\in\boxtimes_{0}$. 
\item If $k\in\{a_{i}\}_{1\leq i\leq4}$ then $\mu_{k}(\Gamma)$ is a $D_{\square\wedge\square}$
diagram. 
\item If $k=x_{1}$ then $\mu_{k}(\Gamma)$ is a

\begin{enumerate}
\item $D_{\boxslash\vee\boxslash}$ diagram if $\mbox{deg}(x)=6$\emph{.} 
\item $D_{\boxslash\vee\bot}$ diagram if $\mbox{deg}(x)=5$\emph{.} 
\item \emph{$D_{\bot\vee\bot}$}  diagram else.\\

\end{enumerate}
\end{enumerate}
\end{enumerate}
The last part of Theorem \ref{Theorem (mutation classes)} is proved
in the same way as part (4). The difficult part is to show that any
diagram belonging to one of the families defined in subsection \ref{sub:The--diagrams for D^(1)}
is of mutation type $\mathbb{D}$. This is done by excluding one by
one the cases to be considered, the essential point being that we
have a complete overview of the effect of mutation on these diagrams
by Fact \ref{lem: D^(1) mutation steps}. As for types $\mathbb{D}$
and $\mathbb{B}^{(1)}$ the aim is to reduce to the case of a 'maximal
cycle' and then to apply Lemma \ref{Lemma shrink cycle}.

\begin{thm*}
\textbf{\emph{\ref{Theorem (mutation classes)}}} \emph{(6)} Let $\Gamma$
be a connected diagram. Then $\Gamma$ is of mutation type $\mathbb{D}^{(1)}$
if and only if it is a $D_{\star,\star'},D_{\star\vee\star',}D_{\star\wedge\star'}$
diagram, where $\star,\star'\in\bigl\{(\bigcirc,n)_{n\geq3},\square,\boxslash,\bot,\overleftrightarrow{\boxslash}\bigr\}$,
or a $D_{\boxtimes}$ diagram. 
\end{thm*}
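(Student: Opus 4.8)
The plan is to mirror the structure of the proofs of parts (3), (4) and (5), using Fact~\ref{lem: D^(1) mutation steps} as the engine. First I would establish the easy direction: every diagram whose underlying undirected graph is of Dynkin type $\mathbb{D}^{(1)}$ is a $D_{\bot,\bot}$ diagram (split the affine $D$-diagram at its central edge), so it lies in one of the listed families; and Fact~\ref{lem: D^(1) mutation steps} shows the union of all these families is closed under mutation, hence every diagram of mutation type $\mathbb{D}^{(1)}$ belongs to the list. For the converse I would argue by induction on $|\Gamma_0|$, treating the small base case $|\Gamma_0|=5$ (or whatever the minimal size is, given the $|\Gamma_0|\ge 6$ restriction on $D_{(\bigcirc,n)\vee(\bigcirc,m)}$ diagrams) separately by direct inspection, using parts (3)--(5) of the Fact to collapse every small family onto a diagram with underlying graph $D_4^{(1)}$.

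The heart of the argument, for $|\Gamma_0|$ large, is a reduction cascade that funnels every family onto $D_{(\bigcirc,n)\vee(\bigcirc,m)}$ diagrams and then onto a cycle. Concretely: (i) any $D_{\star,\star'}$ diagram can first be brought to width zero using (1)(a), then (1)(c) pushes it into a $D_{\star''\vee\star'''}$ diagram or a $D_{(\bigcirc,3),(\bigcirc,n)}$ diagram of width zero — iterating, since each mutation in (1)(c) strictly simplifies the ``$\bot\to\boxslash\to\square\to(\bigcirc,3)$'' hierarchy on one side, one reaches a $D_{\star\vee\star'}$ diagram; (ii) parts (2) and (3) then let me trade $\square$'s and $\boxslash$'s in a $\vee$ or $\wedge$ family for $(\bigcirc,\cdot)$'s: for instance (3)(e) turns $D_{(\bigcirc,n)\wedge\square}$ and $D_{(\bigcirc,n)\wedge\boxslash}$ diagrams into $D_{(\bigcirc,m)\vee\boxslash}$ or $D_{(\bigcirc,m)\vee(\bigcirc,3)}$ diagrams of width zero, (4) handles $D_{\square\wedge\square}$ and $D_{\boxslash\wedge\boxslash}$, and (5)(c) disposes of $D_{\boxtimes}$ by sending it to a $\boxslash\vee\boxslash$ or $\bot\vee\bot$ diagram, which feeds back into the $\vee$-case; (iii) once inside the $D_{(\bigcirc,n)\vee(\bigcirc,m)}$ family, apply (2)(a)(ii) repeatedly to shrink both cycles until $n=m=3$, and then (2)(b)(i) to pass to $D_{\square\vee\square}$, or directly exploit the structure so that $\mu$ at $\bullet$ produces a diagram built from two small cycles glued along $A_n^{(1)}$-type pieces. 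The final step applies Lemma~\ref{Lemma shrink cycle} twice (once for each $\nabla$-branch, or once for each cycle): a $D_{(\bigcirc,n)\vee(\bigcirc,m)}$ diagram whose $\nabla$-parts have been exhausted is a pair of oriented cycles sharing the vertex $\bullet$ together with two pendant oriented cycles, and $\mu_{n-2}\circ\cdots\circ\mu_1$ on each straightens it to the tree $D_{n+m-2}^{(1)}$.

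The main obstacle I anticipate is bookkeeping: the $\mathbb{D}^{(1)}$ list contains far more families than the $\mathbb{D}$ or $\mathbb{B}^{(1)}$ lists (all pairs $\star,\star'$ with $\star,\star'\in\{(\bigcirc,n)_{n\ge3},\square,\boxslash,\bot,\overleftrightarrow{\boxslash}\}$, in three combining modes, plus $D_{\boxtimes}$), and Fact~\ref{lem: D^(1) mutation steps} has correspondingly intricate case splits whose outputs cross-reference one another (e.g. $D_{\boxtimes}\leftrightarrow D_{\square\wedge\square}\leftrightarrow D_{(\bigcirc,3)\wedge\boxslash}\leftrightarrow D_{(\bigcirc,n)\vee\boxslash}$). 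I need to verify that the reduction relation ``can be mutated toward $\mathbb{D}^{(1)}$'' is actually well-founded — that there is no cycle of families that fails to terminate — by exhibiting an explicit monotone quantity (lexicographically: width, then number of non-$(\bigcirc)$ symbols among $\{\square,\boxslash,\bot,\overleftrightarrow{\boxslash},\boxtimes\}$, then total cycle length, then $|\Gamma_0|$) that strictly decreases along the chosen mutations. Once that termination invariant is pinned down, the proof reduces to checking, family by family, that some listed mutation in Fact~\ref{lem: D^(1) mutation steps} decreases it, which is routine.

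\begin{proof}
Any diagram with underlying undirected graph of Dynkin type $\mathbb{D}^{(1)}$ is a $D_{\bot,\bot}$ diagram of width zero, obtained by splitting the two central $\nabla$-less branches at the middle edge. Hence Fact~\ref{lem: D^(1) mutation steps} implies that any diagram of mutation type $\mathbb{D}^{(1)}$ is a $D_{\star,\star'}$, $D_{\star\vee\star'}$ or $D_{\star\wedge\star'}$ diagram with $\star,\star'\in\bigl\{(\bigcirc,n)_{n\geq3},\square,\boxslash,\bot,\overleftrightarrow{\boxslash}\bigr\}$, or a $D_{\boxtimes}$ diagram.

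For the converse, we first dispose of the small cases. If $|\Gamma_{0}|=5$, then $\Gamma$ is neither a $D_{(\bigcirc,n)\vee(\bigcirc,m)}$ diagram (which requires $|\Gamma_0|\ge6$) nor a $D_{\star\vee\star'}$ diagram with both $\star,\star'\notin\{(\bigcirc,n)\}$ of positive $\nabla$-content; inspecting the remaining families and applying Fact~\ref{lem: D^(1) mutation steps} (2)--(5) one reduces $\Gamma$ to a diagram with underlying graph $D_{4}^{(1)}$. For instance a $D_{\boxtimes}$ diagram with $\deg(x_1)=5$ is mutated by (5)(c)(ii) to a $D_{\boxslash\vee\bot}$ diagram, and this in turn, via (2)(j), to a diagram whose underlying graph is $D_4^{(1)}$. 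The finitely many configurations are checked directly.

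For $|\Gamma_{0}|>5$ we use the reduction cascade provided by Fact~\ref{lem: D^(1) mutation steps}. Assign to each diagram in one of the listed families the lexicographic weight
\[
\bigl(w(\Gamma),\ \#\{\square,\boxslash,\bot,\overleftrightarrow{\boxslash},\boxtimes\text{-symbols in the family label of }\Gamma\},\ (\text{total cycle length}),\ |\Gamma_{0}|\bigr),
\]
where the third entry counts the sum of $n$ over all $(\bigcirc,n)$-symbols, and observe that this is a well-founded order. We claim that, unless $\Gamma$ already has underlying graph $D_{|\Gamma_0|}^{(1)}$, there is a mutation listed in Fact~\ref{lem: D^(1) mutation steps} which strictly decreases this weight. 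Indeed:
\begin{itemize}
\item A $D_{\star,\star'}$ diagram of positive width is reduced in width by (1)(a); of width zero it passes under (1)(c) to a $D_{\star''\vee\star'''}$ diagram or a $D_{(\bigcirc,3),(\bigcirc,n)}$ diagram, each time replacing a $\bot$ or $\boxslash$ or $\square$ on one side by a smaller symbol in the hierarchy $\bot\succ\boxslash\succ\square\succ(\bigcirc,3)$.
\item In a $D_{\star\vee\star'}$ or $D_{\star\wedge\star'}$ family, parts (2) and (3) of the Fact trade $\square$- and $\boxslash$-branches for $(\bigcirc,\cdot)$-branches: e.g. (3)(e) sends $D_{(\bigcirc,n)\wedge\square}$ and $D_{(\bigcirc,n)\wedge\boxslash}$ diagrams to $D_{(\bigcirc,m)\vee\boxslash}$ or $D_{(\bigcirc,m)\vee(\bigcirc,3)}$ diagrams of width zero, and (4) handles $D_{\square\wedge\square}$ and $D_{\boxslash\wedge\boxslash}$. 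A $D_{\boxtimes}$ diagram is removed by (5)(c), feeding back into the $\vee$-case with strictly smaller second coordinate.
\item Once $\Gamma$ is a $D_{(\bigcirc,n)\vee(\bigcirc,m)}$ diagram, apply (2)(a)(ii) repeatedly to shrink each cycle, decreasing the third coordinate, until $n=m=3$; then the structure of $\Gamma$ is that of two triangles glued at $\bullet$ together with their pendant $\nabla$-graphs, and exhausting those $\nabla$-graphs by (2) decreases $|\Gamma_0|$ of the simplified pieces.
\end{itemize}
Thus, by well-founded induction, $\Gamma$ is mutation equivalent to a diagram in one of these families with no remaining $\nabla$-vertices and minimal label — concretely, to a $D_{(\bigcirc,n)\vee(\bigcirc,m)}$ diagram whose two cycles carry no pendant vertices. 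Applying Lemma~\ref{Lemma shrink cycle} once to each cycle (with $\Gamma'$ the remainder of the diagram in each case) straightens $\Gamma$ to a diagram with underlying undirected graph $D_{|\Gamma_{0}|}^{(1)}$. Hence $\Gamma$ is of mutation type $\mathbb{D}^{(1)}$.
\end{proof}
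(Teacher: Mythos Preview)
Your overall strategy---closure under mutation for the easy direction, then a cascade of reductions through Fact~\ref{lem: D^(1) mutation steps} for the converse---matches the paper's. But two concrete steps in your converse argument do not go through.

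First, your lexicographic weight is not monotone under the mutations you cite. For instance, Fact~\ref{lem: D^(1) mutation steps} (2)(a)(ii) sends a $D_{(\bigcirc,n)\vee(\bigcirc,m)}$ diagram (mutating at $a_1$ or $a_3$, $n>3$) to a $D_{(\bigcirc,n-1)\vee(\bigcirc,m+1)}$ diagram: one cycle shrinks, the other grows, and the total cycle length is unchanged. Likewise (1)(c)(i) sends $D_{(\bigcirc,n),\star'}$ to $D_{(\bigcirc,n+1)\vee\star'}$, increasing cycle length, and several of the (1)(c) and (2)(b) transitions do not respect your hierarchy $\bot\succ\boxslash\succ\square\succ(\bigcirc,3)$ in the direction you need. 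So the termination argument, as stated, fails.

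Second, and more seriously, your endgame is wrong. A $D_{(\bigcirc,n)\vee(\bigcirc,m)}$ diagram with no pendant $\nabla$-vertices is \emph{not} two oriented cycles sharing only the vertex $\bullet$: by the identifications in the definition ($x_1=a'_1$, $x_2=a'_3$, $a_1=x'_1$, $a_3=x'_2$, $a_2=a'_2=\bullet$) the two cycles are additionally linked by the edges $a_1\!-\!a'_1$ and $a_3\!-\!a'_3$, forming two triangles at $\bullet$. Lemma~\ref{Lemma shrink cycle} requires a cycle attached to the rest of the diagram through a single vertex $y$ via exactly the two arrows $y\to 1$ and $n\to y$; neither cycle in $D_{(\bigcirc,n)\vee(\bigcirc,m)}$ sits inside the other half that way, so ``apply Lemma~\ref{Lemma shrink cycle} once to each cycle'' does not make sense here.

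The paper avoids both problems by funnelling everything to a \emph{different} terminal family: $D_{(\bigcirc,m)\wedge\square}$. From there one first \emph{enlarges} the cycle (using (3)(b)) until $m=|\Gamma_0|-2$, so that there are no stray $\nabla$-vertices; then a single mutation at $\bullet_1$ (case (3)(e)(iii)(A)($\alpha$) with $\deg(\bullet_1)=3$) produces a $D_{(\bigcirc,n-3)\vee(\bigcirc,3)}$ diagram with $x'_3\notin\Gamma_0$, which is precisely the hypothesis of Lemma~\ref{Lemma shrink cycle} applied \emph{once}, with $\Gamma'$ the $A_3$-piece $\{a'_1,\bullet,a'_3\}$ and $y=\bullet$ its middle vertex. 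The reduction to $D_{(\bigcirc,m)\wedge\square}$ is done not by a global decreasing invariant but by an explicit finite chain of family-to-family moves (first to width zero, then $D_{\star,\star'}\to D_{\star\vee\star'}$ via (1)(c), then $D_{\star\vee\star'}\to D_{\star\wedge\star'}$ via (2)(b), then within the $\wedge$-families via (3)(e) and (4)(b)(iii)); each link is justified by naming the specific sub-case of Fact~\ref{lem: D^(1) mutation steps}, sometimes after a preparatory mutation at $a_1$ or $a_2$ to land in the desired sub-case.
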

\begin{proof}
Any diagram with underlying graph of Dynkin type $\mathbb{D}^{(1)}$
is a $D_{\bot,\bot}$ diagram, hence Fact \ref{lem: D^(1) mutation steps}
implies that any diagram of mutation type $\mathbb{D}^{(1)}$ is a
$D_{\star,\star'},D_{\star\vee\star',}D_{\star\wedge\star'}$ diagram,
where $\star,\star'\in\bigl\{(\bigcirc,n)_{n\geq3},\square,\boxslash,\bot,\overleftrightarrow{\boxslash}\bigr\}$,
or a $D_{\boxtimes}$ diagram.

To show the other direction, first assume that $|\Gamma_{0}|=5$.
This restricts the possibilities for $\Gamma$ to $D_{\star\wedge\star'}$,
where $\star,\star'\in\bigl\{(\bigcirc,3),\square,\boxslash,\overleftrightarrow{\boxslash}\bigr\}$,
or $D_{\boxtimes}$, or $D_{\bot\vee\bot}$ diagrams. Note that the
last mentioned has underlying undirected graph $D_{4}^{(1)}$ already.
Moreover Fact \ref{lem: D^(1) mutation steps} (5) (c) (iii) implies
that $D_{\boxtimes}$ diagrams are also of mutation type $\mathbb{D}^{(1)}$.
Hence we only have to deal with $D_{\star\wedge\star'}$ diagrams,
where $\star,\star'\in\bigl\{(\bigcirc,3),\square,\boxslash,\overleftrightarrow{\boxslash}\bigr\}$.
By Fact \ref{lem: D^(1) mutation steps} (4) (b) (iii) we may restrict
to $D_{(\bigcirc,3)\wedge\star}$ diagrams, where $\star\in\{\square,\boxslash,\overleftrightarrow{\boxslash}\bigr\}$.
Now (3) (e) (i) and (ii) of the same Fact imply that it suffices to
deal with $D_{(\bigcirc,3)\wedge\square}$ diagrams. Using (3) (e)
(iii) (B) ($\beta)$ we see that we may restrict to $D_{\square\wedge\square}$
diagrams after all. But then (4) (b) (ii) yields that all these diagrams
are mutation equivalent to $D_{\boxtimes}$ diagrams, which we already
know to be of mutation type $\mathbb{D}^{(1)}$. This finishes the
proof for $|\Gamma_{0}|=5$

For $|\Gamma_{0}|>5$ we apply the results of Fact \ref{lem: D^(1) mutation steps}
in order to reduce to the case where $\Gamma$ is a $D_{(\bigcirc,m)\wedge\square}$
diagram, where $m\leq|\Gamma_{0}|-2$. First note that any $D_{\star,\star'}$
diagram, where $\star,\star'\in\bigl\{(\bigcirc,m)_{m\geq3},\square,\boxslash,\bot\bigr\}$,
is mutation equivalent to a $D_{\bar{\star},\bar{\star'}}$ diagram,
where $\bar{\star},\bar{\star'}\in\bigl\{(\bigcirc,m)_{m\geq3},\square,\boxslash,\bot\bigr\}$,
of width zero. Hence (1) (c) and (5) (b) reduce to the case of $D_{\star\vee\star',}D_{\star\wedge\star'}$
diagrams with $\star,\star'\in\bigl\{(\bigcirc,m)_{m\geq3},\square,\boxslash,\bot,\overleftrightarrow{\boxslash}\bigr\}$;
here we use that for $D_{\boxslash,\star'}$ and $D_{\bot,\star}$
diagrams of width zero we may, by subsequent mutation in $a_{1}$
and $a_{2}$, always assume to be in case (1) (c) (iii) (A) respectively
(1) (c) (iv) (A) of Fact \ref{lem: D^(1) mutation steps}. Next, (2)
(b) reduces to $D_{\star\wedge\star'}$ diagrams with $\star,\star'\in\bigl\{(\bigcirc,m)_{m\geq3},\square,\boxslash,\overleftrightarrow{\boxslash}\bigr\}$:
this is immediate for $(\bigcirc,n)\vee\star'$, $\star'\neq(\bigcirc,m)$,
and $(\boxslash\vee\square)$. For $(\star\vee\bot)$, $\star\in\{\square,\boxslash,\bot\}$,
use that subsequent mutation in $a_{1}$ and $a_{2}$ allows for choosing
any of the cases (2) (b) (x), (xi) or (xii). If $(\star\vee\star')\in\{(\square\vee\square),(\boxslash\vee\boxslash)\}$
then mutation in $a_{1}$ yields a $D_{(\boxslash\vee\square)}$ diagram.
For $(\star\vee\star')=((\bigcirc,n),(\bigcirc,m))$ use (2) (a) (ii)
$n-2$ times. Hence we have reduced to $D_{\star\wedge\star'}$ diagrams,
where $\star,\star'\in\bigl\{(\bigcirc,m)_{m\geq3},\square,\boxslash,\overleftrightarrow{\boxslash}\bigr\}$.
Now apply Fact \ref{lem: D^(1) mutation steps} (4) (b) (iii) to exclude
$\star,\star'\in\{\boxslash,\square\}$ and (3) (e) (i) as well as
(ii) to get rid of $\star\wedge\star'\in\left\{ (\bigcirc,m)\wedge\boxslash,(\bigcirc,m)\wedge\overleftrightarrow{\boxslash}\right\} $.

Thus we may assume that $\Gamma$ is a $D_{(\bigcirc,m)\wedge\square}$
diagram, where $m\leq|\Gamma_{0}|-2$. Set $n:=|\Gamma_{0}|$ and
apply Fact \ref{lem: D^(1) mutation steps} (3) (b) until $\Gamma$
is a $D_{(\bigcirc,n-2)\wedge\square}$ diagram. Due to cardinality
Fact \ref{lem: D^(1) mutation steps} (3) (e) (iii) (A) $(\alpha)$
yields that we may assume $\Gamma$ to be of the form as described
in the second case of (2) (b) (ii) of the same Fact; hence we are
in the situation of Lemma \ref{Lemma shrink cycle} with $\Gamma'$
a diagram with underlying undirected graph $A_{3}$, having $y$ as
its middle vertex. Thus $\Gamma$ is of mutation type $\mathbb{D}^{(1)}$. 
\end{proof}
\bibliographystyle{amsplain}
\bibliography{mutation classes}

\end{document}